\numberwithin{equation}{section}
\newcommand{\Z}{\mathbb{Z}}
\newcommand{\Q}{\mathbb{Q}}
\newcommand\FF{\mathbb{F}}
\newcommand\pp{\mathfrak{p}}
\newcommand\Aut{\text{Aut}}
\newcommand\Gal{\mathrm{Gal}}
\newcommand\ord{\mathrm{ord}}
\DeclareSymbolFont{cyrletters}{OT2}{wncyr}{m}{n}
\DeclareMathSymbol{\Sha}{\mathalpha}{cyrletters}{"58}
\newtheorem{lemma}{Lemma}[section]
\newtheorem{theorem}[lemma]{Theorem}
\newtheorem{proposition}[lemma]{Proposition}
\newtheorem{corollary}[lemma]{Corollary}
\newtheorem{mydef}[lemma]{Definition}
\newtheorem*{remark}{Remark}
\newcommand{\sump}{\sideset{}{^\prime}\sum}
\begin{document}
%\maketitle
%
%\vspace{-1cm}
%
%\begin{center}
%\emph{Dedicated to Ad\`ele Koymans-Funcken}
%\end{center}

\title{Weak approximation on the norm one torus}
\author{P. Koymans}
\email{peter.koymans@eth-its.ethz.ch}
\address{Institute for Theoretical Studies, ETH Zurich, 8006 Zurich, Switzerland}
\author{N. Rome}
\email{rome@tugraz.at}
\address{Graz University of Technology, Institute of Analysis and Number Theory,
Kopernikusgasse 24/II, 8010 Graz, Austria.}
\dedication{Dedicated to Ad\`ele Koymans-Funcken}
\classification{11N45, 14G12, 14G05, 11L40, 20G30}
\keywords{weak approximation, norm one torus, Hasse norm principle, Malle's conjecture, arithmetic statistics, large sieve}

\begin{abstract}
For any abelian group $A$, we prove an asymptotic formula for the number of $A$-extensions $K/\Q$ of bounded discriminant such that the associated norm one torus $R_{K/\Q}^1 \mathbb{G}_m$ satisfies weak approximation. We are also able to produce new results on the Hasse norm principle and to provide new explicit values for the leading constant in some instances of Malle's conjecture.
\end{abstract}

\maketitle

\section{Introduction}
Fix a finite abelian group $A$ of order $d$. Let $K/\Q$ be a Galois extension with $\Gal(K/\Q) \cong A$. We can view $K$ as $d$-dimensional $\Q$ vector space. Suppose that $\omega_1, \ldots, \omega_d$ is a basis then consider the affine variety $T$ over $\Q$ defined by the equation
$
N_{K/\Q}(x_1\omega_1 + \ldots + x_n \omega_n) =1.
$
(Alternatively, $T$ is the kernel $R^1_{K/\Q} \mathbb{G}_m$ of the norm map from the Weil restriction of scalars $R_{K/\Q} \mathbb{G}_m \rightarrow \mathbb{G}_m$.)
The arithmetic of this torus is particularly interesting as both a question in the study of rational points on Fano varieties and in the context of arithmetic statistics. The close parallels between counting number fields of bounded discriminant and studying rational points on varieties has been the subject of much recent interest (see \cite{Ellenberg} and \cite{Yasuda}). The torus will always have rational points so we can ask: how are the rational points distributed (qualitatively) on $T$?

\begin{mydef}
Let $X/k$ be a smooth variety. We say that \emph{weak approximation} holds for $X$ if the rational points $X(k)$ are dense in the product of local points $\prod_{\nu} X(k_{\nu})$, under the product topology.
\end{mydef}

If $A$ is cyclic, then weak approximation is guaranteed on $T$ by the Hasse norm theorem in class field theory (or by \eqref{eq:vosk}). However, it was recently shown by Frei--Loughran--Newton~\cite[Thm 1.5]{FLN} that for any non-cyclic abelian group $A$, there exist extensions $K/\Q$ with Galois group $A$ such that the associated norm one torus fails to satisfy weak approximation. Our main result is to establish an asymptotic formula for precisely how many $A$-extensions, when ordered by absolute discriminant, are such that weak approximation holds on the norm one torus.

\begin{theorem}
\label{thm:main}
Let $A$ be a non-trivial finite abelian group and $\ell$ the smallest prime divisor of $\vert A \vert$. There exist constants $C(A)$, $\delta(A)$ and $\alpha(A)$ all positive such that for all $X \geq 100$, we have
\begin{align*}
\#&\{ K/\Q : \textup{Disc}(K/\Q)\leq X, \textup{Gal}(K/\Q) \cong A \textup{ and } R^1_{K/\Q} \mathbb{G}_m \textup{ satisfies weak approximation}\}\\
&
=
C(A) X^{\frac{\ell}{|A|\cdot (\ell -1)}} (\log X)^{\alpha(A)-1} + O(X^{\frac{\ell}{|A|\cdot (\ell -1)}}(\log X)^{\alpha(A) -1- \delta(A)}).
\end{align*}
\end{theorem}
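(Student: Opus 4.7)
The plan is to reduce weak approximation on $R^1_{K/\Q}\mathbb{G}_m$ to a local-at-each-prime condition on the ramification of $K/\Q$, and then count such abelian extensions by discriminant via harmonic analysis on characters combined with a large sieve.

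First, I would derive a local criterion. Voskresenski\u{\i}'s exact sequence expresses the defect $A(T)$ of weak approximation in terms of Galois cohomology; dualising $1\to T \to R_{K/\Q}\mathbb{G}_m \to \mathbb{G}_m\to 1$ yields $0\to\Z\to\Z[A]\to\hat T\to 0$, and combining these via Shapiro's lemma reduces $A(T)=0$ to surjectivity of a restriction map $\bigoplus_p H^1(D_p,\hat T)\to H^1(A,\hat T)$, where $D_p$ denotes the decomposition group at $p$. For abelian $A$ this unpacks into the requirement that at each rational prime $p$, the decomposition/inertia pair $(D_p,I_p)$ avoid a finite list $\Sigma(A)$ of forbidden configurations depending only on the abstract group $A$. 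When $A$ is cyclic, $\Sigma(A)$ is empty (recovering the Hasse norm theorem), while for non-cyclic abelian $A$ the list is non-empty, quantitatively refining the failure result of \cite{FLN}.

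Next, by Galois theory, $A$-extensions of $\Q$ with $\Gal(K/\Q) \cong A$ correspond, modulo the action of $\Aut(A)$, to surjective continuous homomorphisms $\chi\colon G_\Q\to A$, and $\Disc(K/\Q)=\prod_p p^{c_p(\chi)}$ depends only on the local restrictions $\chi|_{G_{\Q_p}}$. The quantity in Theorem~\ref{thm:main} thus becomes
$$
N(X)=\frac{1}{|\Aut(A)|}\#\Bigl\{\chi\colon G_\Q\twoheadrightarrow A:\prod_p p^{c_p(\chi)}\le X,\ (D_p,I_p)\notin\Sigma(A)\ \forall p\Bigr\}.
$$
After M\"obius inversion over subgroups of $A$ to remove the surjectivity constraint, the main term is extracted from the rightmost pole of a Dirichlet series $F(s)=\sum_\chi \prod_p p^{-sc_p(\chi)}\mathbf{1}_{\chi_p\text{ good}}$, which admits an Euler product. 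Standard Tauberian methods applied to $F$ produce a main term whose abscissa is precisely $\ell/(|A|(\ell-1))$ and whose logarithmic exponent $\alpha(A)$ records the number of $\Aut(A)$-orbits of minimum-index (i.e.~index-$\ell$) subgroups of $A$ compatible with the avoidance of $\Sigma(A)$.

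Finally, for the power-saving error term with $\delta(A)>0$, I would apply a large sieve inequality for Dirichlet characters of bounded conductor, in the spirit of recent work by Alberts, Koymans--Pagano and others on Malle-type arithmetic statistics. The main obstacle is Step one: although $A(T)$ is cohomologically classical, extracting a local criterion that decouples cleanly across primes and meshes with the subgroup combinatorics underlying Malle's parametrisation requires careful case analysis of pairs $(D_p,I_p)$. A secondary difficulty is ensuring that the forbidden configurations in $\Sigma(A)$ genuinely contribute to strictly smaller order than the main term — this reduces to verifying that any bad local behaviour forces a suboptimal conductor exponent, which in turn determines the precise admissible value of $\delta(A)$.
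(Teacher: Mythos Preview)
Your outline has the right shape at the level of slogans, but there is a genuine gap at the heart of Step~2: the Dirichlet series $F(s)$ you write down does \emph{not} admit an Euler product, and this is precisely the main difficulty of the problem. The weak approximation criterion at a ramified prime $p$ constrains the full decomposition group $D_p=\langle I_p,\mathrm{Frob}_p\rangle$, not just the inertia. In the parametrisation by tuples $(v_a)_{a\in A\setminus\{0\}}$ (or equivalently in the adelic picture), the inertia subgroup at $p$ is determined by which $v_a$ the prime $p$ divides, but $\chi(\mathrm{Frob}_p)$ is a function of \emph{all} the other variables $v_b$ with $p\nmid v_b$. Concretely, the indicator $\mathbf{1}_{\chi_p\text{ good}}$ expands into terms of the form $\psi_{v_{a_2},\ell,1}(\mathrm{Frob}_{p})$ for primes $p\mid v_{a_1}$, i.e.\ genuine bilinear character sums coupling distinct variables. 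These cross-terms are exactly what prevent a Tauberian argument from going through directly; one needs the combinatorial identification of which index configurations give principal characters (Lemmas~\ref{lBlock} and~\ref{lBlock2}) together with a large sieve for bilinear sums of $\ell$-th power residue symbols over $\Q(\zeta_\ell)$ (Proposition~\ref{pLargeSieve}) to show the non-principal terms are negligible. Your final paragraph gestures at a large sieve, but ``Dirichlet characters of bounded conductor'' is not the relevant input---the conductors of the oscillating characters are comparable to $X$, and what is needed is cancellation in bilinear forms.

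Two smaller points. First, your proposed mechanism for $\delta(A)>0$ (``any bad local behaviour forces a suboptimal conductor exponent'') is incorrect: forbidden Frobenius classes occur at the same power of $X$ as the main term, and the saving is only in the power of $\log X$, coming from equidistribution of Frobenius. Second, your description of $\alpha(A)$ as a count of $\Aut(A)$-orbits of index-$\ell$ subgroups is not what emerges; the actual exponent~\eqref{ealphaA} is a weighted sum over $a\in A[\ell]\setminus\{0\}$ of the density of $b\in A$ for which $\wedge^2\langle a,b\rangle$ dies in $\wedge^2 A$, reflecting the probability that a random Frobenius is admissible for a given inertia. You have also misidentified the hard step: the local criterion (your Step~1) is essentially classical---it follows from \cite[Theorems~6.1--6.2]{FLN} and the identification $H^3(A,\Z)\cong\mathrm{Hom}(\wedge^2 A,\Q/\Z)$---whereas the counting with Frobenius constraints is where all the work lies.
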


\noindent The constant $\alpha(A)$ has the following explicit expression
\begin{align}
\label{ealphaA}
\alpha(A) = \sum_{a \in A[\ell] - \{0\}} \frac{|\{b \in A : \textup{Hom}(\wedge^2(A), \Q/\Z) \rightarrow \textup{Hom}(\wedge^2(\langle a, b \rangle), \Q/\Z) \textup{ is the zero map}\}|}{(\ell-1)\cdot|A|},
\end{align}
where $\wedge^2$ is the second exterior algebra and $\textup{Hom}(\wedge^2(A), \Q/\Z) \rightarrow \textup{Hom}(\wedge^2(\langle a, b \rangle), \Q/\Z)$ is simply the natural restriction map.

Frei--Loughran--Newton~\cite[Thm 1.5]{FLN} established that $0\%$ of $A$-extensions (of any given number field) satisfy the weak approximation property when ordered by discriminant if the $\ell$-Sylow subgroup of $A$ is not cyclic (and otherwise a positive proportion do). This result is recovered for abelian extensions of $\Q$ by combining our result with Wright's theorem on the number of $A$-extensions of bounded discriminant \cite[Thm 1.2]{Wright}. Their results gave density statements but no information about the order of magnitude of the size of the set in Theorem \ref{thm:main}. The only case where an asymptotic formula had previously been known is when $A \cong \left(\Z/2\Z\right)^2$, by work of the second named author \cite{biquad}.

\subsection{The Hasse norm principle}\label{ss:hnp}
The problem of weak approximation on $R^1_{K/\Q} \mathbb{G}_m$ is closely related to determining whether the \emph{Hasse norm principle} holds for $K/\Q$. This problem asks: if  an element of $K$ is a norm for every local extension $K_{\mathfrak{p}}/\Q_p$, is it also a norm for the global extension $K/\Q$? In geometric language, do all the principal homogeneous spaces for the torus $R^1_{K/\Q} \mathbb{G}_m$ satisfy the Hasse principle? The connection between the two problems is made explicit by the following short exact sequence due to Voskresenskii~\cite[Thm 6]{Vosk} 
\begin{equation}
\label{eq:vosk}
0 \rightarrow A(T) \rightarrow H^3(A, \Z)^\vee \rightarrow \Sha(T) \rightarrow 0,
\end{equation} 
where $\Sha(T)$ is the Tate--Shafarevich group of the torus and $A(T)= \left( \prod_{\nu} T(\Q_{\nu})\right)/\overline{T(\Q)}$ is the defect of weak approximation. This sequence can also be viewed as an artifact of the fact that the Brauer--Manin obstruction is the only obstruction to weak approximation for the torus and to the Hasse principle for any principal homogeneous space under the torus (see \cite{Sansuc}).

If $K/\Q$ is cyclic then Tate's theorem shows that $H^3(A, \Z)^\vee$ is trivial, guaranteeing both the Hasse norm principle and weak approximation on the norm one torus. If the group $H^3(A, \Z)^\vee$ is cyclic of prime order then either $\Sha(T)=0$ or $A(T)=0$ but not both. In other words, the Hasse norm principle fails if and only if $R^1_{K/\Q} \mathbb{G}_m$ satisfies weak approximation. Therefore, in certain cases, one can deduce information about Hasse norm principle failure from our main theorem.

\begin{corollary}
Suppose that $A \cong \prod_{i=1}^r (C_{p_i})^{n_i}$ for primes $p_1<\ldots<p_r$ and natural numbers $n_i$ such that $\prod_{i=1}^r n_i = 2$. Then there exist constants $C(A)$, $\delta(A)$ and $\alpha(A)$ all positive such that for all $X \geq 100$, we have
\[
\#\{ K/\Q : \textup{Disc}(K/\Q)\leq X, \textup{Gal}(K/\Q) \cong A \textup{ and } K/\Q \textup{ fails the Hasse norm principle}\}
\]
\[
=
C(A) X^{\frac{\ell}{|A| \cdot (\ell -1)}} (\log X)^{\alpha(A)-1} + O(X^{\frac{\ell}{|A| \cdot (\ell -1)}}(\log X)^{\alpha(A) -1 - \delta(A)}).
\]
Moreover, if $n_1 \neq 2$, then a positive proportion of $A$-extensions fail the Hasse norm principle.
\end{corollary}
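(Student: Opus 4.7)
The plan is to deduce the corollary from Theorem~\ref{thm:main} by exploiting the Voskresenskii sequence~\eqref{eq:vosk}. The essential observation is that for the abelian groups $A$ under consideration, $H^3(A,\Z)^\vee$ turns out to be cyclic of prime order, placing us in the dichotomy highlighted in Subsection~\ref{ss:hnp}: failure of the Hasse norm principle becomes equivalent to weak approximation on the torus.

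The first step would be the cohomological computation of $H^3(A,\Z)^\vee$. Combining $H^3(A,\Z)\cong H^2(A,\Q/\Z)$ (since the $\Q$-cohomology of a finite group vanishes), the universal coefficient theorem, and the K\"unneth identification $H_2(A,\Z)\cong \wedge^2 A$ for a finite abelian group, one obtains $H^3(A,\Z)^\vee \cong \wedge^2 A$. Because $C_{p_i}\otimes_{\Z} C_{p_j}=0$ for distinct primes, this wedge decomposes as $\bigoplus_{i=1}^{r} \wedge^2 \bigl((C_{p_i})^{n_i}\bigr)$. The condition $\prod_i n_i = 2$ forces exactly one $n_j$ to equal $2$ and all remaining $n_i$ to equal $1$; since $\wedge^2 C_p = 0$ and $\wedge^2\bigl((C_p)^2\bigr) \cong C_p$, this yields $H^3(A,\Z)^\vee \cong C_{p_j}$.

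The second step would be to feed this into~\eqref{eq:vosk}. Since the middle term is cyclic of prime order $p_j$, for every $A$-extension $K/\Q$ precisely one of $A(T)$ and $\Sha(T)$ is trivial and the other is isomorphic to $C_{p_j}$. Equivalently, $K/\Q$ fails the Hasse norm principle if and only if $T = R^1_{K/\Q}\mathbb{G}_m$ satisfies weak approximation. The set of $K$ counted in the corollary therefore coincides with that of Theorem~\ref{thm:main}, so the asymptotic and its error term transfer verbatim, with the same constants $C(A)$, $\delta(A)$, $\alpha(A)$.

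For the positive proportion claim, the hypotheses $\prod n_i = 2$ and $n_1 \neq 2$ force $n_1 = 1$, so the $\ell$-Sylow subgroup of $A$ is cyclic (where $\ell = p_1$). In this regime \cite[Thm 1.5]{FLN} shows that a positive proportion of $A$-extensions of $\Q$ ordered by discriminant have norm one torus satisfying weak approximation, and the equivalence established above promotes this to a positive proportion failing the Hasse norm principle. The only real content of the argument is the identification in the first step; once $H^3(A,\Z)^\vee$ is seen to be cyclic of prime order the remaining steps are purely formal, so no serious obstacle is anticipated.
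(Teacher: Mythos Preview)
Your proposal is correct and follows the same route as the paper: the surrounding discussion in Subsection~\ref{ss:hnp} already identifies that $H^3(A,\Z)^\vee\cong\wedge^2 A$ is cyclic of prime order for these $A$, so that failure of the Hasse norm principle is equivalent to weak approximation on $R^1_{K/\Q}\mathbb G_m$, reducing the corollary to Theorem~\ref{thm:main}. The only cosmetic difference is that for the positive-proportion clause you invoke \cite[Thm~1.5]{FLN} directly, whereas the paper frames it as combining Theorem~\ref{thm:main} with Wright's theorem; both arguments are equivalent.
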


When combined with Wright's theorem \cite{Wright}, this corollary recovers density results due to Frei--Loughran--Newton~\cite[Thm 1.1]{FLN} and the authors~\cite[Thm 1.2]{KR} for such $A$. In $\S$ \ref{sec:explicit}, we give an explicit expression for the proportion of $C_2 \times C_3 \times C_3$-extensions failing the Hasse norm principle.

\subsection{Proof structure}
The arithmetic of the norm one torus is intimately connected with the arithmetic of the field $K$. Write $D_p$ for the decomposition group of a prime $p$.

\begin{lemma}
The norm one torus $R^1_{K/\Q}$ satisfies weak approximation if and only if the restriction map $\textup{Hom}(\wedge^2(A), \Q/\Z) \rightarrow \textup{Hom}(\wedge^2(D_p), \Q/\Z)$ is identically zero for all primes $p$.
\end{lemma}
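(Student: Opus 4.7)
The plan is to combine the Voskresenskii exact sequence~\eqref{eq:vosk} with Tate's theorem on the Hasse norm principle, and then to conclude via Pontryagin duality; recall that weak approximation on $T$ is equivalent to $A(T)=0$.

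First, for any finite abelian group $G$ one has the functorial identifications
\begin{equation*}
H^3(G,\Z)\;\cong\; H^2(G,\Q/\Z)\;\cong\; \textup{Hom}(\wedge^2 G,\Q/\Z),
\end{equation*}
obtained from the long exact sequence associated to $0\to\Z\to\Q\to\Q/\Z\to 0$ (together with $H^i(G,\Q)=0$ for $i\geq 1$), universal coefficients, and the Hopf formula $H_2(G,\Z)\cong \wedge^2 G$. Dualizing, $H^3(G,\Z)^{\vee}\cong \wedge^2 G$. By naturality, the restriction map on $H^3$ induced by the inclusion $D_p\hookrightarrow A$ is precisely the restriction map displayed in the statement.

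Next, I would invoke Tate's theorem on the Hasse norm principle for an abelian extension, which gives
\begin{equation*}
\Sha(T)\;\cong\; \textup{coker}\Bigl(\bigoplus_p \wedge^2 D_p \longrightarrow \wedge^2 A\Bigr),
\end{equation*}
where the maps are induced by $D_p\hookrightarrow A$. Substituting this identification, together with $H^3(A,\Z)^{\vee}\cong \wedge^2 A$, into~\eqref{eq:vosk} shows that $A(T)$ is the image of $\bigoplus_p \wedge^2 D_p \to \wedge^2 A$. Consequently $A(T)=0$ if and only if each map $\wedge^2 D_p \to \wedge^2 A$ vanishes, and by Pontryagin duality---which is exact on finite abelian groups---this is equivalent to the vanishing of its dual, the restriction $\textup{Hom}(\wedge^2 A,\Q/\Z)\to \textup{Hom}(\wedge^2 D_p,\Q/\Z)$, for every prime $p$. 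This is the stated criterion.

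The substantive input is Tate's theorem in the second step; everything else is a formal manipulation with universal coefficients and Pontryagin duality. The only delicate point is verifying that the map $H^3(A,\Z)^{\vee}\to \Sha(T)$ appearing in~\eqref{eq:vosk} really matches, under the identifications above, the natural quotient $\wedge^2 A\twoheadrightarrow \textup{coker}$; this is built into the derivation of Voskresenskii's sequence from Poitou--Tate duality for the character module $\widehat{T}=I_A$, and is where one has to be careful rather than purely formal.
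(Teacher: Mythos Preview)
Your argument is correct and is essentially the same as the paper's. In Section~\ref{sec:crit} the paper quotes two results of Frei--Loughran--Newton (\cite[Theorems 6.1 and 6.2]{FLN}), which are precisely the packaged forms of the Voskresenskii sequence together with Tate's computation of $\Sha(T)$, and then applies the same universal-coefficients identification $H^3(B,\Z)\cong\textup{Hom}(\wedge^2 B,\Q/\Z)$ that you use. You have simply unwound those citations and argued directly from Voskresenskii and Tate; the substance and the delicate compatibility check you flag are identical. One cosmetic point: the paper's version in Section~\ref{sec:crit} runs over all places $v$, but the archimedean decomposition group is cyclic so $\wedge^2 D_\infty=0$ and the condition there is vacuous, which is why the introductory lemma (and your argument) may safely restrict to finite primes.
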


This lemma provides the key criterion by which we may distinguish fields with the weak approximation property. Therefore our main result can be re-interpreted as a problem concerning counting the number of $A$-extensions, ordered by discriminant, with a set of allowed splitting conditions at every finite place. In general, such problems are extremely difficult and of great interest to the arithmetic statistics community. We are able to achieve success in this particular case by a combination of a clever parametrisation of the fields and detecting our splitting conditions using character sums.

More specifically, in Section \ref{sec:par}, $A$-extensions will be parametrised by tuples of squarefree integers $(v_a)_{a \in A - \{0\}}$. In the formula for the discriminant, the higher the order of the group element $a$, the larger the exponent of the component $v_a$. Therefore the components $v_a$ with $a \in A[\ell] - \{0\}$ will carry the most weight in the formula and we assume that the remaining variables are quite small. In other words, we may fix an extension $L/\Q$ with $\Gal(L/\Q) \cong A/A[\ell]$ of small discriminant and allow $K$ to vary across $A[\ell]$-extensions of $L$. This reduces the problem to understanding the number of multicyclic extensions of bounded discriminant with certain splitting conditions at each prime. This also explains the shape of the logarithmic exponent given in equation (\ref{ealphaA}), since inertia subgroups will typically live in $A[\ell]$.

The nature of the splitting conditions imposed on the multicyclic extension is encapsulated in the notion of \emph{$f$-correctness} in Section \ref{sec:reduction}. Theorem \ref{tMultiReduction} establishes the necessary count for multicyclic extensions and this is used to establish the general statement in Theorem \ref{tMain}. Sections \ref{sec:charsum}--\ref{sec:oscillation} are devoted to proving Theorem \ref{tMultiReduction}. Finally, in Section \ref{sec:explicit}, we compute some explicit examples, elucidating the proof strategy and providing completely explicit leading constants in certain cases.

\begin{remark}
The fact that we can essentially reduce the problem to the multicyclic case is a consequence of the discriminant being an unfair counting function (in the language of Wood~\cite{Wood}). It is very plausible that a similar approach will be fruitful when ordering fields in another way, however the task is made simpler under this parametrisation when ordering by discriminant. 
\end{remark}

\begin{remark}
Frei--Loughran--Newton \cite{FLN2} also considered such problems when ordering fields by conductor instead of discriminant. In this setting, $0\%$ of fields have weak approximation on the norm one torus, for any non-cyclic abelian group $A$. If $A$ is a multicyclic group, then the discriminant is a fixed power of the conductor. As such our main results (in particular, the calculations in $\S$ \ref{sec:multicyc}) recover (and improve) this density result for extensions over $\Q$. 
\end{remark}

%Our work makes use of the structure of abelian groups to complete the counting argument and the work of Frei--Loughran--Newton relies on class field theoretic input. 

It is also interesting to consider the problem of the statistics of the Hasse norm principle and weak approximation on the norm one torus for non-abelian extensions. Macedo showed, in his thesis \cite{Andre}, that $100\%$ of $D_4$ octics over $\Q$ satisfy the Hasse norm principle and that $0\%$ satisfy the weak approximation property, when ordering by either conductor or discriminant. The conductor ordering part of his work is unconditional (thanks to work of Altu{\u g}--Shankar--Varma--Wilson~\cite{D4s}) but the discriminant ordering part is conditional on work in progress of Shankar--Varma~\cite{ShankarVarma} on Malle's conjecture for $D_4$ octics. Macedo--Newton \cite{AnSn} have given criteria for the failure of weak approximation and the Hasse norm principle in fields with normal closure $S_n$ and $A_n$. Combining this with the counting techniques of Bhargava~\cite{Bhargava4, Bhargava5}, Newton--Varma (in forthcoming work \cite{Varma}) will study the frequency of Hasse norm principle failures in non-quartic $S_4$ fields and non-quintic $S_5$ fields. (Note that the Hasse norm principle is guaranteed in degree $n$ $S_n$ extensions, see Voskresenski{\u \i}~\cite{Sn}, and in degree $n$ $D_n$ extensions by Bartels~\cite{Bartels}.)  There has also been recent work of Monnet~\cite{S4s} on the related problem of counting how often certain prescribed elements of a number field $k$ are norms as one varies over $S_4$ quartic extensions of $k$.

\subsection{Equidistribution of Frobenius}
As mentioned above, the key step in our proof is a reduction to the case of multicyclic extensions. It will be important that we can count multicyclic extensions which have the necessary local properties to ensure that weak approximation is satisfied for the full extension. This is the crux of Theorem \ref{tMultiReduction}, the main technical input into the proof, which should be viewed as a \emph{quantitative Frobenius equidistribution} result. Essentially it states that one can uniformly count multicyclic extensions of a number field $F$ while imposing that the Frobenius element of primes ramifying in the multicyclic extension or the number field $F$ lands in (essentially) any given subset of the Galois group. This equidsitribution is reflected nicely in the leading constants for these problems.

The total number of $\ell$-multicyclic fields (for $\ell \neq 2$) of bounded discriminant has the following leading constant in its asymptotic formula
\[
\frac{ \left( 1 + \frac{\ell^n-1}{\ell^2} \right)(\ell^n - \ell^{n-1})^{-\frac{\ell^n -1}{\ell -1} +1}}{\Gamma \left(\frac{\ell^n-1}{\ell-1} \right)\prod_{i=0}^{n-1} (\ell^n - \ell^i)}
\prod_{p \equiv 1 \bmod \ell} \left( 1 + \frac{\ell^n -1}{p} \right) \prod_p \left( 1 - \frac{1}{p} \right)^{\frac{\ell^n-1}{\ell-1}}
.
\]
This fact is proven in Theorem \ref{thm:multicycWright}. When counting such extensions for which weak approximation holds on the norm one torus the leading constant (c.f. $\S$ \ref{sec:multicyc}) is
\[
\frac{\left( 1 + \frac{\ell^n-1}{\ell^2} \right)(\ell^n - \ell^{n-1})^{-\frac{\ell^n-1}{\ell^{n-1}(\ell-1)}+1}}{\Gamma\left(\frac{\ell^n-1}{\ell^{n-1}(\ell-1)} \right)\prod_{i=0}^{n-1} (\ell^n - \ell^i)} 
\prod_{p \equiv 1 \bmod \ell} \left( 1 + \frac{\ell^n-1}{\ell^{n-1} p} \right) \prod_p \left( 1- \frac{1}{p} \right)^{\frac{\ell^n-1}{\ell^{n-1}(\ell-1)}}\!\!.
\]
One notes that the constants are remarkably similar and that one major change is to the terms within the Euler product. The factors in the Euler product corresponding to the weak approximation count feature an extra $\frac{1}{\ell^{n-1}}$ which reflects the fact that at each prime $p$, if Frobenius elements were distributed uniformly at random among all elements of the Galois group (quotiented by inertia), the probability that the Frobenius element is trivial is $\frac{1}{\ell^{n-1}}$. More generally, Theorem \ref{tMultiReduction} features a product of factors corresponding to the probability that Frobenius lands in the specified subgroups at each prime. This quantitative equidistribution is key for our purposes but also likely to be highly useful in many further problems in arithmetic statistics.

\subsection{Notations and conventions}
\begin{itemize}
\item The symbol $v$ denotes a place of $\Q$, and $p$ denotes a finite place of $\Q$. Given a finite place $p$, we write $v_p$ for the corresponding valuation.
\item We say that an integer $d$ is squarefree if $p \mid d$ implies $p^2 \nmid d$. In particular squarefree integers may be negative.
\item We say that two squarefree integers $d, e$ are coprime if we have $\gcd(d, e) = 1$ and furthermore $d$ and $e$ are not both negative.
\item We write $\infty$ for the infinite place of $\Q$. We say that $\infty$ divides $d$, written $\infty \mid d$, if $d < 0$. Then we have that two squarefree integers $d, e$ are coprime if and only if there does not exist a place $v$ of $\Q$ such that $v \mid d$ and $v \mid e$.
\item A Galois extension $K/\Q$ is called multicyclic if $\Gal(K/\Q) \cong (\Z/\ell\Z)^n$ for some prime $\ell$.
\item Elements $\phi$ of $\text{Epi}(G_\Q, A)$ will often be referred to as $A$-extensions. This is the same data as a Galois extension $K/\Q$ together with an isomorphism between $\Gal(K/\Q)$ and $A$. In case a property depends only on the field $K$, then we shall frequently abuse notation by also referring to $K/\Q$ as an $A$-extension by forgetting the choice of isomorphism.
\end{itemize}

\subsection*{Acknowledgements} 
PK would like to thank Efthymios Sofos for valuable discussions on \cite{GK, Kou}. NR would like to thank Andre Macedo for a number of discussions at the inception of this project. Both authors would like to thank Christopher Frei, Daniel Loughran and Rachel Newton for their comments. PK acknowledges the support of Dr. Max R\"ossler, the Walter Haefner Foundation and the ETH Z\"urich Foundation. NR is funded by FWF project ESP 441-NBL. This work was completed while the authors were based at the University of Michigan.

\section{Parametrisation of abelian extensions}
\label{sec:par}
The following parametrisation is based on the methods of Koymans--Pagano \cite{KPMalle}. Fix an algebraic closure $\overline{\Q}$ of $\Q$ and fix a finite abelian group $A$, which we view as a topological group by using the discrete topology. Our goal is to describe the set
\[
\{K : \Gal(K/\Q) \cong A, D_K \leq X\},
\]
where all our number fields $K$ are implicitly taken inside $\overline{\Q}$. There is a natural surjective map $\psi$
\[
\text{Epi}(G_\Q, A) \rightarrow \{K : \Gal(K/\Q) \cong A\}
\]
from the set of continuous epimorphisms $G_\Q \rightarrow A$ to $\{K : \Gal(K/\Q) \cong A\}$. The map $\psi$ sends a continuous epimorphism $\phi$ to the fixed field of $\text{ker}(\phi)$. If we define the discriminant of $\phi: G_\Q \rightarrow A$ to be the discriminant of the fixed field, then this map trivially preserves the discriminant. Furthermore, a field $K$ with $\Gal(K/\Q) \cong A$ has precisely $|\text{Aut}(A)|$ pre-images under $\psi$. Hence we will now shift our attention to $\text{Epi}(G_\Q, A)$.

It turns out to be slightly easier to work with $\text{Hom}(G_\Q, A)$, and we will later deduce results for $\text{Epi}(G_\Q, A)$ from this. We will now create a bijection between $\text{Hom}(G_\Q, A)$ and certain tuples of integers. Let us first define this space $\mathcal{A}$.

\begin{mydef}
Let $A$ be a finite abelian group. Let $\mathcal{A}$ be the set of tuples $(v_a)_{a \in A - \{0\}}$ satisfying the following conditions
\begin{itemize}
\item $v_a$ is a squarefree integer for every $a \in A - \{0\}$;
\item $v_a$ and $v_b$ are coprime for all $a, b \in A - \{0\}$ with $a \neq b$;
\item we have
\[
p \equiv 1 \bmod \frac{\ord(a)}{p^{v_p(\ord(a))}}
\]
for all prime divisors $p$ of $v_a$;
\item if $\textup{ord}(a) > 2$, then $v_a > 0$.
\end{itemize}
\end{mydef}

To create the bijection, we make some arbitrary choices. Let 
\[
K_p := 
\begin{cases}
\Q(\zeta_{p^\infty}) & \text{if } p > 2 \\
\Q(\zeta_{2^\infty})^+ & \text{if } p = 2,
\end{cases}
\]
where $\Q(\zeta_{2^\infty})^+$ denotes the maximal real subfield of $\Q(\zeta_{2^\infty})$. We have an isomorphism
\[
\Gal(K_p/\Q) \cong \Z_p \oplus \Z/(p - 1)\Z.
\]
Since $\Gal(K_p/\Q)$ is pro-cyclic, we may choose a topological generator $\tau_p$ of $\Gal(K_p/\Q)$. Let $\tau_\infty$ be a generator of $\Gal(\Q(\sqrt{-1})/\Q)$. Then the maximal abelian extension $\Q^{\text{ab}}$ of $\Q$ is the compositum of $K_p$ and $\Q(\sqrt{-1})$ over all primes $p$. Furthermore, these fields are linearly disjoint, which gives an isomorphism
\[
\Gal(\Q^{\text{ab}}/\Q) \cong \Gal(\Q(\sqrt{-1})/\Q) \times \prod_p \Gal(K_p/\Q).
\]
Define $\sigma_p$ and $\sigma_\infty$ to be the unique elements in $\Gal(\Q^{\text{ab}}/\Q)$ that project to respectively $\tau_p$, $\tau_\infty$ and zero everywhere else. Then the $\sigma_p$ and $\sigma_\infty$ together form a minimal set of topological generators of $\Gal(\Q^{\text{ab}}/\Q)$.

Let $p$ be a prime, let $n \geq 0$ and let $\ell$ be a prime congruent to $1 \bmod p^n$. Let
\[
\psi_{\ell, p, n} \in \text{Hom}(G_\Q, \Z/p^n\Z)
\]
be the unique homomorphism that is unramified away from $\ell$ and satisfies $\psi_{\ell, p, n}(\sigma_\ell) = 1$. Note that it makes sense to evaluate $\psi_{\ell, p, n}$ in $\sigma_\ell$, since any homomorphism $\psi_{\ell, p, n}: G_\Q \rightarrow \Z/p^n\Z$ must factor through $\Gal(\Q^{\text{ab}}/\Q)$. Similarly, let
\[
\psi_{p, p, n} \in \text{Hom}(G_\Q, \Z/p^n\Z)
\]
be the unique homomorphism that is unramified away from $p$ and sends $\sigma_p$ to $1$. Finally, let
\[
\psi_{\infty, 2, 1} \in \text{Hom}(G_\Q, \Z/2\Z)
\]
be the unique surjective homomorphism that factors through $\Gal(\Q(\sqrt{-1})/\Q)$.

If $x$ is a squarefree integer such that all its prime divisors are $1, p \bmod p^n$ and such that $x > 0$ if $(p, n) \neq (2, 1)$, we define
\[
\psi_{x, p, n} = \sum_{\ell \mid x} \psi_{\ell, p, n} \in \text{Hom}(G_\Q, \Z/p^n\Z).
\]
In case $\ell \neq p$ is not congruent to $1 \bmod p^n$, we let $m \geq 0$ be the largest integer such that $\ell$ is congruent to $1 \bmod p^m$. Then we define $\psi_{\ell, p, n}$ to be any lift of $\psi_{\ell, p, m}$, which means that $q _{n, m} \circ \psi_{\ell, p, n} = \psi_{\ell, p, m}$, where $q_{n, m}$ is the unique map $\Z/p^n\Z \rightarrow \Z/p^m\Z$ sending $1$ to $1$. We let $\psi_{\infty, 2, n}$ be any lift of $\psi_{\infty, 2, 1}$, while $\psi_{\infty, p, n}$ is defined to be any lift of the zero map for $p \neq 2$. We may then still define $\psi_{x, p, n}$ as above.

Using these choices, we will construct a map $\text{Par}: \mathcal{A} \rightarrow \text{Hom}(G_\Q, A)$. Take $(v_a)_{a \in A - \{0\}} \in \mathcal{A}$. Choose a cyclic quotient $\Z/p^n\Z$ of $A$. Write $\pi: A \rightarrow \Z/p^n\Z$ for the quotient map and $\pi^\ast$ for the induced map $\text{Hom}(G_\Q, A) \rightarrow \text{Hom}(G_\Q, \Z/p^n\Z)$. Then we demand that
\begin{align}
\label{eFundPar}
\pi^\ast(\text{Par}((v_a)_{a \in A - \{0\}})) = \sum_{a \in A - \{0\}} \pi(a) \cdot \psi_{v_a, p, n}
\end{align}
for all $\pi$. We emphasize that each $\pi(a) \cdot \psi_{v_a, p, n}$ is a homomorphism despite the fact that $\psi_{v_a, p, n}$ need not be. 

We claim that equation (\ref{eFundPar}) uniquely specifies the homomorphism $\text{Par}((v_a)_{a \in A - \{0\}})$. By the fundamental theorem of abelian groups, we may decompose $A$ as
\[
A = \bigoplus_{i = 1}^k \Z/p_i^{e_i}\Z.
\]
Writing $\pi_i$ for the natural projection map $A \rightarrow \Z/p_i^{e_i}\Z$, we see that each homomorphism $\pi_i \circ \text{Par}((v_a)_{a \in A - \{0\}})$ is determined by equation (\ref{eFundPar}). Therefore equation (\ref{eFundPar}) specifies at most one homomorphism $\text{Par}((v_a)_{a \in A - \{0\}})$.

To prove existence, we observe that there certainly exists a homomorphism $\text{Par}((v_a)_{a \in A - \{0\}})$ satisfying equation (\ref{eFundPar}) for $\pi \in \{\pi_1, \dots, \pi_k\}$. Viewing each $\pi_i$ as taking values in $\Q/\Z$ by fixing an inclusion $\Z/p_i^{e_i}\Z \rightarrow \Q/\Z$, we see that each map $\pi: A \rightarrow \Z/p^n\Z \rightarrow \Q/\Z$ is a $\Z$-linear combination of the $\pi_i$, since the maps $\pi_i$ generate the dual space $\text{Hom}(A, \Q/\Z)$. Therefore equation (\ref{eFundPar}) holds for all $\pi$ by linearity. 

We conclude that $\text{Par}$ is well-defined. Furthermore, we have the key property 
\begin{align}
\label{eInertiaPar}
\left(\sum_{a \in A - \{0\}} \pi(a) \cdot \psi_{v_a, p, n}\right)(\sigma_\ell) = 
\begin{cases}
\pi(a) & \text{if } a \in A - \{0\} \text{ satisfies } \ell \mid v_a \\
0 & \text{otherwise.}
\end{cases}
\end{align}

\begin{theorem}
The map $\textup{Par}$ is a bijection.
\end{theorem}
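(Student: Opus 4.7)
The plan is to construct an explicit inverse $\Psi: \text{Hom}(G_\Q, A) \to \mathcal{A}$ to $\text{Par}$, by exploiting that any $\phi \in \text{Hom}(G_\Q, A)$ factors through $\Gal(\Q^{\text{ab}}/\Q)$, which is topologically generated by $\{\sigma_\ell\}_{\ell \text{ prime}} \cup \{\sigma_\infty\}$. Since $\sigma_\ell$ (for $\ell$ odd) lies in the $\Gal(K_\ell/\Q)$-factor of $\Gal(\Q^{\text{ab}}/\Q)$, which coincides with the inertia subgroup at $\ell$, and $\phi$ factors through a finite abelian extension ramified at only finitely many primes, we have $\phi(\sigma_\ell) = 0$ for all but finitely many $\ell$. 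I therefore define $\Psi(\phi) = (v_a)_{a \in A - \{0\}}$ by stipulating that a finite prime $\ell$ divides $v_a$ if and only if $\phi(\sigma_\ell) = a$, and the sign of $v_a$ is negative precisely when $\phi(\sigma_\infty) = a$.

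Next I would verify $\Psi(\phi) \in \mathcal{A}$. Squarefreeness and pairwise coprimality for $a \neq b$ are immediate because $\phi(\sigma_\ell)$ is single-valued. For the congruence condition, suppose $\ell$ (odd) divides $v_a$, so $\phi(\sigma_\ell) = a$ in $A$. Using $\Gal(K_\ell/\Q) \cong \Z_\ell \oplus \Z/(\ell - 1)\Z$, any finite cyclic image of $\sigma_\ell$ has order of the form $\ell^i d$ with $d \mid \ell - 1$; in particular the prime-to-$\ell$ part of $\ord(a)$ divides $\ell - 1$, i.e., $\ell \equiv 1 \bmod \ord(a)/\ell^{v_\ell(\ord(a))}$. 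The positivity condition $v_a > 0$ for $\ord(a) > 2$ is forced because $\sigma_\infty$ has order $2$, so $\phi(\sigma_\infty) = a$ would imply $\ord(a) \leq 2$.

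The bijection then follows from equation (\ref{eInertiaPar}): both $\text{Par} \circ \Psi(\phi)$ and $\phi$ are homomorphisms $G_\Q \to A$ agreeing on all topological generators $\sigma_\ell$ and $\sigma_\infty$ (by construction of $\Psi$ and by (\ref{eInertiaPar}) after composition with each primary projection of $A$), hence they coincide; conversely, $\Psi \circ \text{Par}((v_a)) = (v_a)$ because (\ref{eInertiaPar}) together with coprimality uniquely reconstructs each $v_a$ from the values $\text{Par}((v_a))(\sigma_\ell)$. The main subtlety lies in justifying (\ref{eInertiaPar}) itself when the lift $\psi_{\ell, p, n}$ is non-unique (the case $\ell \not\equiv 1 \bmod p^n$): here $\psi_{\ell, p, n}(\sigma_\ell) = 1 + c\, p^{v_p(\ell-1)}$ for some $c$, so $\pi(a) \cdot \psi_{\ell, p, n}(\sigma_\ell)$ differs from $\pi(a)$ by $c\, p^{v_p(\ell-1)} \pi(a)$, and the $\mathcal{A}$-congruence condition on prime divisors of $v_a$ is precisely what ensures $p^{v_p(\ell-1)} \pi(a) = 0$ in $\Z/p^n\Z$, making (\ref{eInertiaPar}) hold unambiguously.
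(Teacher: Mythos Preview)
Your proof is correct and follows essentially the same approach as the paper: your inverse map $\Psi$ is exactly the paper's map $\text{Ev}$, defined by $p \mid v_a \Leftrightarrow \phi(\sigma_p) = a$ for all places $p$ (including $\infty$), and both proofs conclude by checking agreement on the topological generators $\sigma_p, \sigma_\infty$ via equation~(\ref{eInertiaPar}). You actually supply more detail than the paper does, explicitly verifying that $\Psi(\phi) \in \mathcal{A}$ and spelling out why the ambiguity in the lift $\psi_{\ell,p,n}$ does not affect~(\ref{eInertiaPar}), points the paper leaves as routine.
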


\begin{proof}
We will construct an explicit inverse $\text{Ev}$ of $\text{Par}$. Take some $\phi \in \text{Hom}(G_\Q, A)$. Then $\text{Ev}(\phi)$ is the unique tuple $(v_a)_{a \in A - \{0\}}$ of squarefree integers satisfying the property
\[
p \mid v_a \Longleftrightarrow \phi(\sigma_p) = a
\]
for all $a \in A - \{0\}$ and for all places $p$ of $\Q$ (including the infinite place $\infty$). Using equation (\ref{eInertiaPar}), one directly checks that
\[
\text{Par}(\text{Ev}(\phi))(\sigma) = \phi(\sigma)
\]
for $\sigma$ equal to $\sigma_p$ or $\sigma_\infty$. This implies that
\[
\text{Par}(\text{Ev}(\phi)) = \phi,
\]
since the $\sigma_p$ and $\sigma_\infty$ together form a set of topological generators of $\Gal(\Q^{\text{ab}}/\Q)$. A routine verification shows that $\text{Ev} \circ \text{Par} = \text{id}$, which completes the proof of the theorem.
\end{proof} 

The map $\text{Par}$ has two convenient properties. First of all, the space $\mathcal{A}$ is analytically easy to describe. Second of all, we have good control of the discriminant, which we make precise in our next theorem.

\begin{theorem}
\label{tDisc}
Let $\mathbf{v} = (v_a)_{a \in A - \{0\}} \in \mathcal{A}$ be such that $\textup{Par}(\mathbf{v}) \in \textup{Epi}(G_\Q, A)$. Then we have
\begin{align}
\label{eDisc}
v_p(\textup{Disc}(\textup{Par}(\mathbf{v}))) = v_p\left(\prod_{a \in A - \{0\}} v_a^{|A| \cdot \left(1 - \frac{1}{\textup{ord}(a)}\right)}\right)
\end{align}
for all primes $p$ coprime to $2 \cdot |A|$.
\end{theorem}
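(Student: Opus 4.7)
The plan is to combine the conductor--discriminant formula with a direct identification of the inertia subgroup at $p$ read off from the parametrisation. Writing $\phi := \textup{Par}(\mathbf{v})$ and letting $K$ denote the fixed field of $\ker \phi$, I would begin from
\[
v_p(\textup{Disc}(K/\Q)) = \sum_{\chi \in \widehat{A}} v_p(\mathfrak{f}(\chi)),
\]
where each $\chi \in \widehat{A} = \textup{Hom}(A, \C^\times)$ is viewed as a character of $G_\Q$ via $\phi$. Since $p \nmid 2|A|$, every such $\chi$ is at most tamely ramified at $p$, so $v_p(\mathfrak{f}(\chi)) \in \{0, 1\}$, equalling $1$ precisely when $\chi$ is nontrivial on the inertia subgroup $I_p \subseteq A$.

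Next, I would identify $I_p$ explicitly. Because the $v_a$ are squarefree and pairwise coprime, either $p$ divides no $v_a$ --- in which case $p$ is unramified in $K$ and both sides of \eqref{eDisc} vanish --- or there is a unique $a \in A - \{0\}$ with $p \mid v_a$. In the latter case, \eqref{eInertiaPar} applied to every cyclic-factor projection $\pi$ yields $\phi(\sigma_p) = a$. The key claim is that $I_p = \langle a \rangle$; this follows once one knows that $\sigma_p$ is a topological generator of the inertia subgroup at $p$ inside $\Gal(\Q^{\textup{ab}}/\Q)$. This in turn is immediate from the Kronecker--Weber decomposition fixed at the start of Section \ref{sec:par}: for odd $p$, the factor $\Gal(K_p/\Q) = \Gal(\Q(\zeta_{p^\infty})/\Q)$ is totally ramified at $p$, while the other factors $\Gal(K_q/\Q)$ (for $q \neq p$) and $\Gal(\Q(\sqrt{-1})/\Q)$ are unramified at $p$; since $\sigma_p$ lies in the first factor and topologically generates it, applying $\phi$ gives $I_p = \langle \phi(\sigma_p) \rangle = \langle a \rangle$.

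A short character count then finishes the argument: the restriction map $\widehat{A} \twoheadrightarrow \widehat{\langle a \rangle}$ is surjective with kernel of size $|A|/\textup{ord}(a)$, so exactly $|A|(1 - 1/\textup{ord}(a))$ characters of $A$ are nontrivial on $\langle a \rangle$. Combining with the preceding steps,
\[
v_p(\textup{Disc}(\phi)) = \begin{cases} |A|\bigl(1 - \tfrac{1}{\textup{ord}(a)}\bigr) & \text{if } p \mid v_a, \\ 0 & \text{if } p \nmid \prod_a v_a, \end{cases}
\]
which equals $v_p\bigl(\prod_{a \in A - \{0\}} v_a^{|A| \cdot (1-1/\textup{ord}(a))}\bigr)$ by squarefreeness and pairwise coprimality of the tuple.

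The main obstacle is the assertion in the second paragraph that $\sigma_p$ generates the inertia at $p$ inside $\Gal(\Q^{\textup{ab}}/\Q)$. Although conceptually transparent from Kronecker--Weber, it requires careful bookkeeping with the explicit splitting of $\Gal(\Q^{\textup{ab}}/\Q)$ into the factors $\Gal(K_q/\Q)$ and $\Gal(\Q(\sqrt{-1})/\Q)$, and in particular uses the hypothesis $p \nmid 2$ to sidestep the more delicate situation at $p = 2$, where $K_2$ was only the totally real subtower and the ramification of $\Q(\sqrt{-1})$ at $2$ must be tracked separately. Once this local identification is verified, the rest of the proof reduces to elementary character theory and the conductor--discriminant formula.
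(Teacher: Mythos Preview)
Your proof is correct and shares the same crucial step as the paper's: both arguments hinge on the observation that for odd $p$ the element $\sigma_p$ topologically generates the inertia subgroup at $p$ inside $\Gal(\Q^{\textup{ab}}/\Q)$, so that the image of inertia under $\phi$ is exactly $\langle a \rangle$ when $p \mid v_a$. The paper even flags this point in the same way, noting ``here we use that $p \neq 2$''.

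The only genuine difference is in the final bookkeeping. You invoke the conductor--discriminant formula $v_p(\textup{Disc}) = \sum_{\chi \in \widehat{A}} v_p(\mathfrak{f}(\chi))$ and count the $|A|(1 - 1/\textup{ord}(a))$ characters nontrivial on $\langle a \rangle$. The paper instead passes to the subfield fixed by $\langle a \rangle$, observes that $p$ is unramified there, and then computes the discriminant of the remaining cyclic, totally tamely ramified extension of degree $\textup{ord}(a)$ via the tower formula. These two routes are standard and essentially dual to one another; your character count is perhaps marginally cleaner since it avoids introducing the intermediate field, while the paper's filtration argument has the advantage of generalising more transparently to the wild case treated in the subsequent Theorem~\ref{tWildDisc}.
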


\begin{proof}
Let $\mathbf{v} = (v_a)_{a \in A - \{0\}} \in \mathcal{A}$ and let $p$ be a prime coprime to $2 \cdot |A|$. Write $\phi = \text{Par}(\mathbf{v})$ and write $L$ for the extension corresponding to $\phi$. Recall that the inertia subgroup $I_p$ of $\Gal(\Q^{\text{ab}}/\Q)$ is topologically generated by $\sigma_p$ (here we use that $p \neq 2$). Also recall that $I_p$ surjects on the inertia subgroup $I'_p$ of $\Gal(L/\Q)$.

Suppose that $\phi(\sigma_p) = a$. We compute that
\[
v_p\left(\prod_{a \in A - \{0\}} v_a^{|A| \cdot \left(1 - \frac{1}{\textup{ord}(a)}\right)}\right) = |A| \cdot \left(1 - \frac{1}{\textup{ord}(a)}\right),
\]
since $v_p(v_a) = 1$ and the entries of $\mathbf{v}$ are coprime. In order to compute the discriminant, write $K$ for the field corresponding to $\pi \circ \phi$, where $\pi: A \rightarrow A/\langle a \rangle$ is the natural quotient map. Then $p$ is unramified in the extension $K/\Q$, and all places of $K$ above $p$ are totally, tamely ramified in the cyclic extension $L/K$ of degree $\ord(a)$. Therefore we have
\[
v_p(\textup{Disc}(\textup{Par}(\mathbf{v}))) = |A| \cdot \left(1 - \frac{1}{\textup{ord}(a)}\right)
\]
as desired.
\end{proof}

It is most convenient to have a version of the above theorem that also deals with the wild places.

\begin{theorem}
\label{tWildDisc}
Let $\mathbf{v} = (v_a)_{a \in A - \{0\}} \in \mathcal{A}$ be such that $\textup{Par}(\mathbf{v}) \in \textup{Epi}(G_\Q, A)$. Then, for $p > 2$, $v_p(\textup{Disc}(\textup{Par}(\mathbf{v})))$ depends only on the order of $\textup{Par}(\mathbf{v})(\sigma_p)$.
\end{theorem}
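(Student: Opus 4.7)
The natural approach is via the conductor--discriminant formula. Write $\phi = \textup{Par}(\mathbf{v}) \in \textup{Epi}(G_\Q, A)$ and $L/\Q$ for the associated $A$-extension. Then
\[
v_p(\textup{Disc}(L/\Q)) = \sum_{\chi \in \textup{Hom}(A, \Q/\Z)} v_p(\mathfrak{f}(\chi \circ \phi)),
\]
where $\mathfrak{f}(\cdot)$ denotes the Artin conductor of a one-dimensional character of $G_\Q$. Setting $a := \phi(\sigma_p)$, the plan is to show that for $p > 2$ each summand depends only on $\chi \circ \phi|_{I_p}$, that this restriction is in turn determined by $a$, and that after summing over $\chi$ the resulting quantity depends only on $\textup{ord}(a)$.

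The first step is standard: the exponent of the Artin conductor of a one-dimensional character is determined by its restriction to the inertia subgroup $I_p$. Since $\phi$ factors through $\Gal(\Q^{\textup{ab}}/\Q)$, I replace $I_p \subseteq G_\Q$ by its image in $\Gal(\Q^{\textup{ab}}/\Q)$. For $p > 2$, the Kronecker--Weber theorem combined with the linear disjointness set up in Section \ref{sec:par} identifies this image with $\Gal(K_p/\Q) \cong \Z_p^\times$, which is topologically cyclic with $\sigma_p$ as a generator. Consequently, $\phi|_{I_p}$ is the unique continuous homomorphism $\Z_p^\times \to A$ sending $\sigma_p$ to $a$; in particular it factors as $\Z_p^\times \twoheadrightarrow \langle a \rangle \hookrightarrow A$, and this factorisation is, up to the choice of a generator in the cyclic target, the canonical quotient $\Z_p^\times \twoheadrightarrow \Z/\textup{ord}(a)\Z$.

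The restriction map $\textup{Hom}(A, \Q/\Z) \twoheadrightarrow \textup{Hom}(\langle a \rangle, \Q/\Z)$ is surjective with fibres of size $|A|/\textup{ord}(a)$, so writing $c(\psi)$ for the conductor exponent at $p$ of the composition $\Z_p^\times \twoheadrightarrow \langle a \rangle \xrightarrow{\psi} \Q/\Z$, the sum collapses to
\[
v_p(\textup{Disc}(L/\Q)) = \frac{|A|}{\textup{ord}(a)} \sum_{\psi \in \textup{Hom}(\langle a \rangle, \Q/\Z)} c(\psi).
\]
Conductor exponents are invariant under isomorphism of the target, and the composition above depends only on $\textup{ord}(a)$, so the right-hand side depends only on $\textup{ord}(a)$, as required. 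The degenerate case $a = 0$ gives an unramified prime and a zero contribution, consistent with the claim. The only delicate point in the argument is the identification of the inertia at $p$ inside $\Gal(\Q^{\textup{ab}}/\Q)$ with $\Z_p^\times$, which rests on the fact that for odd $p$ the tower $K_p$ is totally ramified at $p$ and linearly disjoint from the remaining cyclotomic pieces used in the construction of $\textup{Par}$.
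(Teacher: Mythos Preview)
Your proof is correct and takes a genuinely different route from the paper's. The paper argues structurally: it reduces to a local question, takes the exact sequence $0 \to I_{K/\Q_p} \to \Gal(K/\Q_p) \to \Gal(K/\Q_p)/I_{K/\Q_p} \to 0$, splits it after composing with a suitable unramified extension, filters the resulting field through the unique subfield of $\Q_p(\zeta_{p^\infty})$ of degree $|I_{K/\Q_p}|$, and then applies the tower formula for discriminants. Your argument instead goes through the conductor--discriminant formula, identifies the inertia at $p$ in $\Gal(\Q^{\textup{ab}}/\Q)$ with the topologically cyclic group $\Z_p^\times$ generated by $\sigma_p$, and reduces the sum over characters of $A$ to $\frac{|A|}{\textup{ord}(a)}$ times a sum over characters of the cyclic group $\langle a \rangle$; since the surjection $\Z_p^\times \twoheadrightarrow \langle a \rangle$ is unique up to an automorphism of the target, this sum depends only on $\textup{ord}(a)$. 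Your approach is arguably more direct and makes the dependence on $\textup{ord}(a)$ transparent without any splitting trick, while the paper's argument has the mild advantage of exhibiting concretely which totally ramified extension carries the discriminant (a fact that is invoked later in the proof of Theorem~\ref{tMain} when computing $v_p(\textup{Disc})$ in the case $\textup{Par}(\sigma_p) \in A[\ell] - \{0\}$).
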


\begin{proof}
This is completely a local question. Let $B$ be a finite abelian group and furthermore let $\phi \in \textup{Epi}(\Gal(\Q_p^{\text{ab}}/\Q_p), B)$. Then we have to show that the discriminant depends only on the order of $\phi(\sigma_p)$.

Write $K/\Q_p$ for the abelian extension corresponding to $\ker(\phi)$ by Galois theory. Denote by $I_{K/\Q_p}$ the inertia subgroup of $K/\Q_p$, which is equal to $\phi(I_p)$. We have an exact sequence
\[
0 \rightarrow I_{K/\Q_p} \rightarrow \Gal(K/\Q_p) \rightarrow \frac{\Gal(K/\Q_p)}{I_{K/\Q_p}} \rightarrow 0.
\]
After replacing $K$ by the compositum $KL'$ for $L'/\Q_p$ an unramified extension of degree equal to $|\Gal(K/\Q_p)|$, we observe that the above exact sequence is split. We may now filter $KL'$ as $KL'/M/\Q_p$, where $M$ is the unique subfield of $\Q_p(\zeta_{p^\infty})$ of degree $|I_{K/\Q_p}|$ followed by an unramified extension. One may therefore directly compute the discriminant of $M/\Q_p$. Then the theorem follows by several applications of the tower formula for the discriminant.
\end{proof}

\section{Criterion for weak approximation}
\label{sec:crit}
We recall the following results from \cite{FLN}. Let $A$ be a finite abelian group, and write $A^\vee := \text{Hom}(A, \Q/\Z)$ for the dual group. Recall that an $A$-extension is a surjective, continuous homomorphism from $G_\Q$ to $A$. We fix embeddings $G_{\Q_v} \rightarrow G_\Q$ for each place $v$ of $\Q$.

\begin{theorem}[{\cite[Theorem 6.2]{FLN}}]
Let $K/\Q$ be an $A$-extension. Then weak approximation holds if and only if the natural surjective map
\[
H^3(A, \Z)^\vee \rightarrow \Sha (R^1_{K/\Q}, \mathbb{G}_m)
\]
is an isomorphism.
\end{theorem}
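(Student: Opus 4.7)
The plan is to deduce this theorem as an essentially immediate consequence of the Voskresenskii exact sequence~\eqref{eq:vosk}, which is already in hand. That sequence reads
\[
0 \to A(T) \to H^3(A, \Z)^\vee \to \Sha(T) \to 0,
\]
so the kernel of the natural surjection $H^3(A, \Z)^\vee \twoheadrightarrow \Sha(T)$ is canonically identified with the defect group $A(T)$, where as usual we abbreviate $T = R^1_{K/\Q}\mathbb{G}_m$.

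First I would unravel the definitions. The torus $T$ satisfies weak approximation precisely when $T(\Q)$ is dense in $\prod_\nu T(\Q_\nu)$, which is tautologically the statement that $A(T) = \bigl(\prod_\nu T(\Q_\nu)\bigr)/\overline{T(\Q)}$ vanishes. On the other hand, since the map $H^3(A, \Z)^\vee \twoheadrightarrow \Sha(T)$ is already given to be surjective, it is an isomorphism if and only if its kernel vanishes. Chaining these two equivalences through \eqref{eq:vosk} yields the theorem.

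The only point that requires genuine care is verifying that the kernel appearing in \eqref{eq:vosk} is \emph{the same group} as the defect $A(T)$, not merely an abstractly isomorphic cousin; otherwise the statement that the map is an isomorphism would be weaker than asserted. This canonicity is built into the standard construction of \eqref{eq:vosk} via the Brauer--Manin pairing on $T$: the pairing
\[
\prod_\nu T(\Q_\nu) \times \mathrm{Br}_1(T)/\mathrm{Br}(\Q) \to \Q/\Z
\]
has $\overline{T(\Q)}$ in its left kernel by global reciprocity, hence descends to a pairing on $A(T)$, and its non-degeneracy on the left is the content of the Colliot-Th\'el\`ene--Sansuc theorem that Brauer--Manin is the only obstruction to weak approximation on tori. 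Combined with the identification of $\mathrm{Br}_1(T)/\mathrm{Br}(\Q)$ with $H^3(A, \Z)$ (via the character-module exact sequence $0 \to \Z \to \Z[A] \to \widehat T \to 0$, Shapiro's lemma, and dimension shifting), this supplies both the exactness and the canonicity. The non-degeneracy check is the step I would expect to require the most bookkeeping, but it is classical and can be quoted from \cite{Sansuc} or \cite{FLN}.
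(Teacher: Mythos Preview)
Your argument is correct. The paper does not supply its own proof of this statement: it is quoted verbatim as \cite[Theorem 6.2]{FLN} with no further justification, so there is nothing to compare against. What you have written is essentially the standard derivation from Voskresenski\u{\i}'s sequence \eqref{eq:vosk}, and indeed the paper itself remarks (just before quoting the theorem) that both weak approximation and the Hasse norm principle are governed by that sequence. Your additional paragraph on canonicity is more than is strictly needed for the logical deduction---once one accepts \eqref{eq:vosk} as an exact sequence of abelian groups with the indicated terms, the equivalence is immediate---but it is a reasonable gloss on why the sequence has the shape it does.
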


\begin{theorem}[{\cite[Theorem 6.1]{FLN}}]
We have
\[
\Sha (R^1_{K/\Q}, \mathbb{G}_m)^\vee = \textup{ker}\left(H^3(A, \Z) \rightarrow \prod_v H^3(\textup{im}(G_{\Q_v}), \Z)\right).
\]
\end{theorem}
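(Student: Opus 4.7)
My plan is to combine the class-field-theoretic computation of $\Sha(T)$ (for $T = R^1_{K/\Q}\mathbb{G}_m$) with the Tate--Nakayama isomorphism, and then dualize. First, from the defining sequence $1 \to T \to R_{K/\Q}\mathbb{G}_m \to \mathbb{G}_m \to 1$ together with Hilbert 90 (and its local analogues applied to $R_{K/\Q}\mathbb{G}_m$ via Shapiro), one obtains $H^1(\Q, T) = \Q^\times/N_{K/\Q}(K^\times)$ and $H^1(\Q_v, T) = \Q_v^\times/N(K_v^\times)$. Under local reciprocity the latter is canonically identified with the decomposition group $D_v = \mathrm{im}(G_{\Q_v})$, so $\Sha(T)$ is realized as the kernel of the localization map $\Q^\times/NK^\times \to \bigoplus_v D_v$, i.e.\ the classical Hasse norm obstruction.

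Next, I would feed the fundamental sequence $0 \to K^\times \to J_K \to C_K \to 0$ of $A$-modules into the long exact sequence of Tate cohomology. Local and global class field theory give $\hat H^i(A, J_K) \cong \bigoplus_v \hat H^{i-2}(D_v, \Z)$ and $\hat H^i(A, C_K) \cong \hat H^{i-2}(A, \Z)$, respectively. Using $\hat H^{-2}(G, \Z) \cong G$ and $\hat H^{-3}(G, \Z) \cong H_2(G, \Z) \cong \wedge^2 G$ valid for any finite abelian group $G$, the relevant piece of the LES becomes
\[
\bigoplus_v \wedge^2 D_v \longrightarrow \wedge^2 A \longrightarrow \Q^\times/NK^\times \longrightarrow \bigoplus_v D_v \longrightarrow A.
\]
Exactness at $\Q^\times/NK^\times$ together with the previous paragraph identifies $\Sha(T) \cong \mathrm{coker}\bigl(\bigoplus_v \wedge^2 D_v \to \wedge^2 A\bigr)$. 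Applying $\mathrm{Hom}(-, \Q/\Z)$ and using the Schur multiplier identification $H^3(G, \Z) \cong H^2(G, \Q/\Z) \cong \mathrm{Hom}(\wedge^2 G, \Q/\Z)$ for finite abelian $G$ (the first from $0 \to \Z \to \Q \to \Q/\Z \to 0$ and the vanishing of $H^*(G, \Q)$, the second from universal coefficients) converts the cokernel description into precisely the kernel appearing in the statement.

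The hard part will be the naturality of the Tate--Nakayama isomorphism: one must verify that under the identifications $\hat H^i(A, J_K) \cong \bigoplus_v \hat H^{i-2}(D_v, \Z)$ and $\hat H^i(A, C_K) \cong \hat H^{i-2}(A, \Z)$, the connecting homomorphisms in the LES for $0 \to K^\times \to J_K \to C_K \to 0$ really correspond to the natural restriction-type maps $\bigoplus_v \wedge^2 D_v \to \wedge^2 A$ and $\bigoplus_v D_v \to A$. This ultimately amounts to a compatibility of cup products with the local and global fundamental classes. Once that compatibility is in hand the final dualization step is essentially a routine functoriality check of the Schur multiplier description $H^3(-, \Z) \cong \mathrm{Hom}(\wedge^2(-), \Q/\Z)$ under the inclusions $D_v \hookrightarrow A$.
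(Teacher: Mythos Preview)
The paper does not actually prove this statement: it is quoted verbatim as \cite[Theorem 6.1]{FLN} and used as a black box in Section~\ref{sec:crit}. So there is no ``paper's own proof'' to compare against.

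That said, your sketch is essentially the standard argument (going back to Tate, and reproduced in \cite{FLN}) and is correct in outline. Two small remarks. First, when you write $\hat H^i(A,J_K)\cong\bigoplus_v\hat H^{i-2}(D_v,\Z)$ you are compressing two steps: the semilocal/Shapiro decomposition $\hat H^i(A,J_K)\cong\bigoplus_v\hat H^i(D_v,K_w^\times)$ followed by local Tate--Nakayama. It is worth separating these, since the naturality you flag as ``the hard part'' is precisely the compatibility of the local and global fundamental classes under $D_v\hookrightarrow A$, and that compatibility is more transparent once the two steps are distinguished. Second, you should check (it is easy but not automatic) that the map $\hat H^0(A,K^\times)\to\hat H^0(A,J_K)$ in the long exact sequence really coincides, under your identifications, with the diagonal localization map $\Q^\times/NK^\times\to\bigoplus_v \Q_v^\times/NK_w^\times$ defining $\Sha(T)$; this is straightforward from the definition of $K^\times\hookrightarrow J_K$, but it is the hinge on which the identification $\Sha(T)\cong\mathrm{coker}\bigl(\bigoplus_v\wedge^2 D_v\to\wedge^2 A\bigr)$ rests.
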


\noindent Combining these two theorems, we see that weak approximation holds if and only if
\[
H^3(A, \Z) = \textup{ker}\left(H^3(A, \Z) \rightarrow \prod_v H^3(\textup{im}(G_{\Q_v}), \Z)\right).
\]
Following \cite[Lemma 6.4]{FLN}, we see that the universal coefficient theorem gives canonical isomorphisms
\[
H^3(B, \Z) \cong \text{Ext}(H_2(B, \Z), \Z) \cong \text{Hom}(\wedge^2(B), \Q/\Z)
\]
for $B$ any finite abelian group. Then we get a diagram
\[ 
\begin{tikzcd}
H^3(A, \Z) \arrow{r}{\text{res}} \arrow{d} & H^3(\textup{im}(G_{\Q_v}), \Z) \arrow{d} \\%
\text{Hom}(\wedge^2(A), \Q/\Z) \arrow{r}{\varphi} & \text{Hom}(\wedge^2(\textup{im}(G_{\Q_v})), \Q/\Z)
\end{tikzcd}.
\]
The bottom map $\varphi$ in the above diagram is simply the following: consider the map $\textup{im}(G_{\Q_v}) \rightarrow A$. By functoriality of $\wedge^2$, we get a map $\wedge^2(\textup{im}(G_{\Q_v})) \rightarrow \wedge^2(A)$, which gives $\varphi$ after dualising. We summarise our discussion as follows.
\begin{theorem}
Weak approximation holds if and only if the natural restriction map
\[
\textup{Hom}(\wedge^2(A), \Q/\Z) \rightarrow \textup{Hom}(\wedge^2(\textup{im}(G_{\Q_v}), \Q/\Z)
\]
is the zero map for each place $v$.
\end{theorem}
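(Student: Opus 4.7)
The proof is essentially a packaging of the two preceding theorems together with the universal coefficient identification, so the plan is quite direct.

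First, I would combine the two cited theorems of Frei--Loughran--Newton. The first says weak approximation holds precisely when the natural surjection $H^3(A,\Z)^\vee \twoheadrightarrow \Sha(R^1_{K/\Q}\mathbb{G}_m)$ is an isomorphism, i.e., when its kernel is trivial. The second identifies $\Sha^\vee$ with the kernel of the restriction map $H^3(A,\Z) \to \prod_v H^3(\textup{im}(G_{\Q_v}), \Z)$. Dualising (using that all groups in sight are finite, so Pontryagin duality is exact), the surjection in the first theorem becomes an isomorphism exactly when the restriction map in the second theorem is identically zero. Thus weak approximation is equivalent to
\[
H^3(A,\Z) \longrightarrow \prod_v H^3(\textup{im}(G_{\Q_v}), \Z) \text{ is the zero map},
\]
which in turn is equivalent to each component $H^3(A,\Z) \to H^3(\textup{im}(G_{\Q_v}), \Z)$ being zero.

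Second, I would translate this to the $\wedge^2$ formulation. For any finite abelian group $B$, the universal coefficient theorem gives $H^3(B,\Z) \cong \textup{Ext}(H_2(B,\Z), \Z) \cong \textup{Hom}(\wedge^2 B, \Q/\Z)$, using the classical identification $H_2(B,\Z) \cong \wedge^2 B$ for finite abelian $B$ and the fact that $\textup{Ext}(-,\Z) \cong \textup{Hom}(-,\Q/\Z)$ on finite abelian groups. The key point, and the only substantive thing to verify, is that this isomorphism is natural in $B$: given any group homomorphism $B' \to B$ (such as $\textup{im}(G_{\Q_v}) \to A$), the induced restriction map on $H^3$ coincides with the dual of the $\wedge^2$-functoriality map. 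This is standard naturality of both $H_2(-,\Z)$ and the universal coefficient/Ext--Hom identifications, and it is exactly the content of the commutative diagram already drawn in the excerpt.

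Combining these two observations, the restriction map $H^3(A,\Z) \to H^3(\textup{im}(G_{\Q_v}), \Z)$ is zero if and only if $\varphi: \textup{Hom}(\wedge^2 A, \Q/\Z) \to \textup{Hom}(\wedge^2 \textup{im}(G_{\Q_v}), \Q/\Z)$ is zero, which gives the theorem after quantifying over $v$.

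The only real obstacle is the naturality check, which is routine; one could alternatively cite it directly from \cite[Lemma 6.4]{FLN}, where the identification is invoked. The rest is formal manipulation of exact sequences and duality.
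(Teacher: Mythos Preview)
Your proposal is correct and follows essentially the same approach as the paper: combine the two cited Frei--Loughran--Newton theorems to reduce weak approximation to the vanishing of each restriction map $H^3(A,\Z) \to H^3(\textup{im}(G_{\Q_v}),\Z)$, then invoke the universal coefficient identification $H^3(B,\Z)\cong \textup{Hom}(\wedge^2 B,\Q/\Z)$ and its naturality (the commutative diagram in the paper) to translate this into the $\wedge^2$-language. The paper's proof is precisely the discussion preceding the theorem statement, and your plan matches it step for step.
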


We decompose $A$ as
\[
A = \bigoplus_{i = 1}^r \bigoplus_{j = 1}^{n_i} \Z/p_i^{e_j}\Z,
\]
where $p_1 < \dots < p_r$ are prime numbers, and $e_1, \dots, e_{n_i}$ are non-negative exponents with $e_1 \leq \dots \leq e_{n_i}$.

\begin{theorem}
\label{tLocalConditions}
There exists a subspace $\mathcal{S}$ of $A[p_1]$ with the following property. Suppose that $a \in A[p_1] - \{0\}$. Then for $b \in A$, we have that
\[
\textup{Hom}(\wedge^2(A), \Q/\Z) \rightarrow \textup{Hom}(\wedge^2(\langle a, b \rangle), \Q/\Z)
\]
is the zero map if and only if
\begin{itemize}
\item if $a \in \mathcal{S}$, then $b$ must be in a certain subset of $A/A[p_1]$ depending only on $a$;
\item if $a \not \in \mathcal{S}$, then $b \in p_1 A + \langle a \rangle$.
\end{itemize}
\end{theorem}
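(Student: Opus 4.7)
The plan is to translate the vanishing condition into a question about a wedge product in $\wedge^2(A)$, reduce to the $p_1$-primary part of $A$, and then read off $\mathcal{S}$ from a direct calculation in terms of an explicit basis. First, since $\langle a,b\rangle$ is a finite abelian group generated by two elements, $\wedge^2(\langle a,b\rangle)$ is cyclic, generated by $a\wedge b$. By Pontryagin duality, the restriction map $\textup{Hom}(\wedge^2(A),\Q/\Z)\to\textup{Hom}(\wedge^2(\langle a,b\rangle),\Q/\Z)$ vanishes if and only if the natural map $\wedge^2(\langle a,b\rangle)\to\wedge^2(A)$ vanishes, if and only if $a\wedge b=0$ in $\wedge^2(A)$. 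Decomposing $A=A_{p_1}\oplus A''$ into the $p_1$-primary and prime-to-$p_1$ parts, and writing $b=b_{p_1}+b''$ accordingly, the contribution $a\wedge b''$ dies because $\gcd(\ord(a),\ord(b''))=1$; hence the problem reduces to the vanishing of $a\wedge b_{p_1}$ in $\wedge^2(A_{p_1})$.

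Next, fix a decomposition $A_{p_1}=\bigoplus_{j=1}^{n_1}\Z/p_1^{e_j}\Z$ with generators $f_j$ and $e_1\leq\dots\leq e_{n_1}$, and set $I:=\{j:e_j=1\}$. Writing $a=\sum_j\alpha_j p_1^{e_j-1}f_j$ (with $\alpha_j\in\mathbb{F}_{p_1}$) and $b_{p_1}=\sum_j\beta_j f_j$, a direct expansion gives
\[
a\wedge b_{p_1}=\sum_{i<j}\bigl(\alpha_i p_1^{e_i-1}\beta_j-\alpha_j p_1^{e_j-1}\beta_i\bigr)\,f_i\wedge f_j,
\]
where $f_i\wedge f_j$ has order $p_1^{\min(e_i,e_j)}=p_1^{e_i}$. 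The vanishing of the $(i,j)$-coefficient modulo $p_1^{e_i}$ then simplifies to $\alpha_i\beta_j\equiv\alpha_j\beta_i\pmod{p_1}$ when $e_i=e_j$, and to $\alpha_i\beta_j\equiv 0\pmod{p_1}$ when $e_i<e_j$.

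I would then define $\mathcal{S}:=A[p_1]\cap p_1A$, which is manifestly an $\mathbb{F}_{p_1}$-subspace of $A[p_1]$ and which in these coordinates is exactly the set of $a$ with $\alpha_j=0$ for every $j\in I$. If $a\in\mathcal{S}$, write $a=p_1a'$; then $a\wedge c=p_1(a'\wedge c)=a'\wedge(p_1c)=0$ for every $c\in A[p_1]$, so the condition $a\wedge b=0$ depends only on $b$ modulo $A[p_1]$ and cuts out a subset of $A/A[p_1]$ depending only on $a$. If $a\notin\mathcal{S}$, pick $j_0\in I$ with $\alpha_{j_0}\neq 0$; the pair conditions at $(j_0,j)$ for $j\neq j_0$ force $\beta_j\equiv\alpha_{j_0}^{-1}\beta_{j_0}\,\alpha_j\pmod{p_1}$ for $j\in I$ and $\beta_j\equiv 0\pmod{p_1}$ for $j\notin I$. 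Under the canonical identification $A/p_1A\cong\bigoplus_j\mathbb{F}_{p_1}\bar f_j$ (in which the image of $a$ is $\sum_{j\in I}\alpha_j\bar f_j$), these conditions say precisely that the image of $b$ is an $\mathbb{F}_{p_1}$-multiple of the image of $a$, i.e.\ $b\in p_1A+\langle a\rangle$. The reverse implication is immediate from $a\wedge a=0$ and $p_1a=0$, and a short case-check confirms the remaining pair conditions follow. The main obstacle is the bookkeeping of pair conditions with differing exponents: the decisive structural fact is that the ordering $e_1\leq\dots\leq e_{n_1}$ forces every $j<j_0$ into $I$, so no unwanted ``mixed'' condition can arise, and the pair conditions at the single chosen $j_0$ already pin down $b$ modulo $p_1A+\langle a\rangle$.
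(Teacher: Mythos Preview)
Your proof is correct and arrives at the same subspace $\mathcal{S}=A[p_1]\cap p_1A$ as the paper, but your argument for the case $a\notin\mathcal{S}$ is genuinely different. The paper works abstractly: assuming $b\notin p_1A+\langle a\rangle$, it shows that the inclusion $\Gamma[p_1^\infty]\hookrightarrow A[p_1^\infty]$ (with $\Gamma=\langle a,b\rangle$) is split by verifying $p_1^kA[p_1^\infty]\cap\Gamma[p_1^\infty]=p_1^k\Gamma[p_1^\infty]$ for all $k$, and that $\Gamma\otimes\FF_{p_1}$ is two-dimensional via an explicit short exact sequence; from the splitting it then reads off that the restriction map is nonzero. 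You instead dualise at the outset to reduce to the vanishing of $a\wedge b$ in $\wedge^2(A)$, pass to the $p_1$-primary part, and compute directly in coordinates, reducing everything to a system of congruences on the $\beta_j$ that visibly pins down $b$ modulo $p_1A+\langle a\rangle$.

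Your route is more elementary and makes the equivalence $b\in p_1A+\langle a\rangle$ completely transparent; the Pontryagin-duality reduction to $a\wedge b=0$ is a clean first move that the paper does not isolate. The paper's splitting argument, on the other hand, is coordinate-free and explains \emph{why} the restriction map is nonzero (the subgroup embeds as a direct summand, so $\wedge^2(\Gamma)\to\wedge^2(A)$ is injective), which is conceptually satisfying but requires more bookkeeping. Both approaches handle the case $a\in\mathcal{S}$ by the same one-line observation $a\wedge c=p_1(a'\wedge c)=0$ for $c\in A[p_1]$.
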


\begin{proof}
We take $\mathcal{S} = A[p_1] \cap p_1A$. Let us first suppose that $a \in \mathcal{S}$. Also recall that we have a canonical isomorphism
\[
\textup{Hom}(\wedge^2(A), \Q/\Z)\cong \{\text{alternating } \Z\text{-multilinear maps } A \times A \rightarrow \Q/\Z\}
\]
by the universal property of $\wedge^2$. Let $\varphi: A \times A \rightarrow \Q/\Z$ be alternating and let $t \in A[p_1]$. We claim that
\[
\varphi(a, t) = 0.
\]
Once the claim is proven, we immediately see that $\textup{Hom}(\wedge^2(A), \Q/\Z) \rightarrow \textup{Hom}(\wedge^2(\langle a, b \rangle), \Q/\Z)$ is the zero map if and only if $\textup{Hom}(\wedge^2(A), \Q/\Z) \rightarrow \textup{Hom}(\wedge^2(\langle a, b + t \rangle), \Q/\Z)$ is the zero map. Therefore the first part of the theorem follows from the claim.

Let us now prove the claim. Since $a \in p_1A$, we may take $b \in A$ such that $p_1 b = a$. We then have the equalities
\[
\varphi(a, t) = p_1 \varphi(b, t) = \varphi(b, p_1t) = \varphi(b, 0) = 0
\]
as claimed.

It remains to prove the second part of the theorem. To this end, let $a \not \in \mathcal{S}$. One readily verifies that the map $\textup{Hom}(\wedge^2(A), \Q/\Z) \rightarrow \textup{Hom}(\wedge^2(\langle a, b \rangle), \Q/\Z)$ is identically zero for $b \in p_1 A + \langle a \rangle$. Now suppose that $b \not \in p_1A + \langle a \rangle$. We must show that the map $\textup{Hom}(\wedge^2(A), \Q/\Z) \rightarrow \textup{Hom}(\wedge^2(\langle a, b \rangle), \Q/\Z)$ is not identically zero. Write $\Gamma := \langle a, b \rangle$. We claim that the injection
\[
0 \rightarrow \Gamma[p_1^\infty] \rightarrow A[p_1^\infty]
\]
is split. This is equivalent to the claim that
\[
p_1^k A[p_1^\infty] \cap \Gamma[p_1^\infty] = p_1^k \Gamma[p_1^\infty]
\]
for all $k > 0$. Observe that the inclusion $\supseteq$ is always true. 

Let $c \in p_1^k A[p_1^\infty] \cap \Gamma[p_1^\infty]$ and take $d$ such that $p_1^k d = c$. Then we have
\[
p_1^k d = n \cdot a + m \cdot b
\]
for some integers $n, m \in \Z$. In particular we deduce that $m \cdot b \in p_1 A + \langle a \rangle$. Since $A/(p_1A + \langle a \rangle)$ has exponent $p_1$ and since $b \not \in p_1 A + \langle a \rangle$ by assumption, it follows that $m$ must be divisible by $p_1$. Therefore we conclude that $n \cdot a$ is a multiple of $p_1$ in $A$. Because $a \not \in \mathcal{S}$, this forces that $p_1$ divides $n$ and thus $n \cdot a = 0$. We conclude that $c = p_1^k d = m \cdot b$. Because $b \not \in p_1A$, we deduce that $p_1^k \mid m$ and then $c \in p_1^k \Gamma[p_1^\infty]$ as desired.

We next claim that $\Gamma \otimes_{\Z} \FF_{p_1}$ has dimension $2$. Once the claim is proven, we deduce that $\wedge^2(\Gamma[p_1^\infty]) \cong \FF_{p_1}$. Using that the injection $0 \rightarrow \Gamma[p_1^\infty] \rightarrow A[p_1^\infty]$ is split, one readily verifies that the map
\[
\textup{Hom}(\wedge^2(A), \Q/\Z) \rightarrow \textup{Hom}(\wedge^2(\Gamma), \Q/\Z)
\]
is not identically zero. Therefore it is enough to establish the claim.

To prove the claim, we remark that the order of $b$ is divisible by $p_1$. Indeed, if not, we would have $b \in p_1A$ contrary to our assumptions. We have an exact sequence
\[
0 \rightarrow \langle a \rangle \cap \langle b \rangle \xrightarrow{\Delta} \langle a \rangle \oplus \langle b \rangle \xrightarrow{\Sigma} \langle a, b \rangle \rightarrow 0,
\]
where $\Delta$ is the map $x \rightarrow (x, -x)$ and $\Sigma$ is the sum map $(x, y) \mapsto x + y$. Therefore it suffices to show that $\langle a \rangle \cap \langle b \rangle = \{0\}$. If not, then there must be some $k \in \Z$ such that $k \cdot b = a$. Since $a \not \in \mathcal{S}$, it follows that $k$ is not divisible by $p_1$. But then we deduce that $b \in p_1 A + \langle a \rangle$, contrary to our assumptions. This gives the claim upon tensoring the above sequence by $\FF_{p_1}$.
\end{proof}

\section{Analytic tools}
\subsection{Sums of multiplicative functions}
After the various reduction steps in the later sections, the count in which we are interested will be expressed as a character sum. For $\ell > 2$, the main term will occur when the combinations of characters are principal. In such cases, we will repeatedly call upon the following general theorem of Koukoulopoulos \cite[Theorem 13.2]{Kou} based on the earlier work Granville--Koukoulopoulos~\cite{GK}.

\begin{theorem}[{\cite[Theorem 1]{GK}}]
\label{thm:GK}
Let $Q \geq 2$ be a parameter and let $f$ be a multiplicative function such that there exists $\alpha \in \mathbb{C}$ with
\begin{align}
\label{eOnPrimes}
\sum_{p \leq x} f(p) \log p = \alpha x + O_A\left( \frac{x}{(\log x)^A} \right) \quad (x \geq Q)
\end{align}
for all $A>0$. Moreover, suppose, for all $n$, that $\vert f(n) \vert \leq \tau_k(n)$ for some positive real number $k$. Fix $\epsilon > 0$ and $J \in \Z_{\geq 1}$. Then, we have 
\[
\sum_{n \leq x} f(n) =  x \sum_{j = 0}^{J - 1} c_j\frac{(\log x)^{\alpha - j - 1}}{\Gamma(\alpha - j)} + O\left(\frac{x (\log Q)^{2k + J - 1}}{(\log x)^{J + 1 - \textup{Re}(\alpha)}}\right)
\]
for $x \geq e^{(\log Q)^{1 + \epsilon}}$ and some explicit constants $c_j$. The implied constant depends at most on $k$, $J$, $\epsilon$ and the implied constant in equation (\ref{eOnPrimes}) for $A$ large enough in terms of $k$, $J$ and $\epsilon$ only. Furthermore, we have
\[
c_0 =  \prod_p \left( 1 + \frac{f(p)}{p} + \frac{f(p^2)}{p^2} + \cdots\right)\left(1 - \frac{1}{p} \right)^{\alpha}
\]
and $c_j \ll_{j, k} (\log Q)^{j + 2k}$.
\end{theorem}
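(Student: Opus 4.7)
The plan is to prove this Selberg--Delange type asymptotic by the \emph{convolution method} of Granville--Koukoulopoulos, which avoids complex analysis and proceeds via the hyperbola trick. First, define the multiplicative function $g$ by the identity $f = d_\alpha * g$, where $d_\alpha$ is the $\alpha$-th divisor function, generated by $\zeta(s)^\alpha$; at primes this gives $g(p) = f(p) - \alpha$, so hypothesis \eqref{eOnPrimes} translates into the ``mean zero'' statement
\[
\sum_{p \leq x} g(p) \log p \ll_A \frac{x}{(\log x)^A} \qquad (x \geq Q).
\]
Coupled with the divisor bound $|f(n)| \leq \tau_k(n)$, one derives $|g(n)| \leq \tau_{k'}(n)$ with $k' \ll k + |\alpha|$, and hence strong control on $\sum_{n \leq x} |g(n)|/n$ by Shiu-type estimates.

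Next I would apply the hyperbola identity
\[
\sum_{n \leq x} f(n) = \sum_{b \leq x} g(b) \sum_{a \leq x/b} d_\alpha(a)
\]
and substitute the classical expansion $\sum_{a \leq y} d_\alpha(a) = y \sum_{j = 0}^{J-1} \lambda_j (\log y)^{\alpha - j - 1}/\Gamma(\alpha - j) + O(y/(\log y)^{J+1-\textup{Re}(\alpha)})$, with coefficients $\lambda_j$ arising from the Laurent expansion of $\zeta(s)^\alpha$ at $s=1$. Expanding $(\log(x/b))^{\alpha-j-1}$ binomially in $(\log b)/(\log x)$ and summing against $g(b)/b$ produces the main terms of the claimed shape, with $c_j$ built from the values $\sum_b g(b) (\log b)^m/b$. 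By partial summation these stabilise to finite limits as $x \to \infty$, giving in particular
\[
c_0 = G(1) = \prod_p \Bigl(1 + \frac{f(p)}{p} + \frac{f(p^2)}{p^2} + \cdots \Bigr)\Bigl(1 - \frac{1}{p}\Bigr)^{\alpha},
\]
the second equality coming from factoring the Dirichlet series $F(s) = \zeta(s)^\alpha G(s)$ into Euler products at $s=1$, and the bound $c_j \ll_{j,k} (\log Q)^{j+2k}$ following from the analogous estimates for the shifted sums $\sum_b g(b)(\log b)^m/b$.

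The remaining task is to track the error, which splits into the contribution of the $O$-term in the $d_\alpha$-asymptotic convolved against $|g|$, and the tail of the truncated $g$-sums; both are handled by Shiu-type divisor estimates. The main obstacle is extracting the uniform dependence on $Q$ and $k$ encoded in the factor $(\log Q)^{2k+J-1}$: for primes $p \leq Q$ no cancellation is known in $\sum g(p)\log p$, and one must rely on the trivial bound $|g(p)| \leq k + |\alpha|$, so the hypothesis only contributes in the range $p > Q$. The restriction $x \geq e^{(\log Q)^{1+\varepsilon}}$ is precisely what allows this un-cancelled contribution from small primes to be absorbed into the stated error. A further subtlety is that $\alpha$ may be complex, so $\Gamma(\alpha-j)^{-1}$ can vanish when $\alpha - j$ is a non-positive integer; in such cases the corresponding main term simply degenerates and is swallowed by the error bound.
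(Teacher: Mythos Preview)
The paper does not prove this theorem at all: it is quoted verbatim from the literature (Granville--Koukoulopoulos \cite{GK} and Koukoulopoulos \cite[Theorem~13.2]{Kou}) and invoked as a black box in Sections~\ref{sec:oscillation} and~\ref{sec:explicit}. There is therefore nothing in the paper to compare your argument against.

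That said, your sketch is a faithful outline of the original Granville--Koukoulopoulos proof: writing $f = d_\alpha * g$ so that $g(p) = f(p) - \alpha$ has vanishing prime average for $p > Q$, applying the Dirichlet hyperbola identity, inserting the asymptotic expansion for $\sum_{a \leq y} d_\alpha(a)$, and controlling the $g$-sums via Shiu/divisor-type bounds. The identification $c_0 = G(1)$ and the handling of the uniformity in $Q$ via the range restriction $x \geq e^{(\log Q)^{1+\epsilon}}$ are exactly as in the source. One small correction: the bound $|g(n)| \leq \tau_{k'}(n)$ you state is not quite right as written, since $g = f * d_{-\alpha}$ and $d_{-\alpha}$ is not bounded by a $\tau_{k'}$ pointwise; what one actually uses is the average bound $\sum_{n \leq x} |g(n)|/n \ll (\log Q)^{O_k(1)}$ and its logarithmic-moment analogues, which follow from the ``beyond LSD'' estimates in \cite{GK}. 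With that caveat, the plan is sound.
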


\subsection{An abstract large sieve}
To handle some of the combinations of characters with large modulus which are non-principal, we will use a number field large sieve. Let $K$ be a number field and let $\ell$ be a prime number. If $\mathfrak{f}$ is an ideal, we write $S_{\mathfrak{f}}$ for the subset of $\alpha \in O_K$ coprime with $\mathfrak{f}$. We also write $N(w)$ for the absolute norm of an element. Let $M \geq 1$ be an integer. Suppose that we are given a map
\[
\gamma : S_{M O_K} \times S_{M O_K} \rightarrow \{0\} \cup \{\zeta_{\ell}^i : i = 0, \dots, \ell - 1\}
\]
and a subset $A_{\text{bad}}$ of $\Z_{\geq 0}$ satisfying the following properties:
\begin{enumerate}
\item[(P1)] we have $\gamma(w, z_1z_2) = \gamma(w, z_1) \gamma(w, z_2)$ and $\gamma(w_1w_2, z) = \gamma(w_1, z) \gamma(w_2, z)$; \quad{}\emph{(multiplicativity)}
\item[(P2)] if $z_1, z_2, w \in S_{M O_K}$ satisfy $z_1 \equiv z_2 \bmod N(w)$ and $z_1 \equiv z_2 \bmod M$, then we have $\gamma(w, z_1) = \gamma(w, z_2)$. Furthermore, if $N(w) \not \in A_{\text{bad}}$, then we have
\[
\sum_{\substack{\xi \bmod MN(w) \\ \gcd(\xi, M) = (1)}} \gamma(w, \xi) = 0; \quad{}\text{\emph{(periodicity)}}
\]
\item[(P3)] we have
\[
\sum_{\substack{n \in A_{\text{bad}} \\ n \leq X}} 1 \leq C_1 X^{1 - C_2}
\]
for some absolute constants $C_1 > 0$ and $0 < C_2 < 1$. \quad{}\emph{(bad count)}
\end{enumerate}

Decompose
\[
O_K^\ast = T \oplus V,
\]
where $T$ is torsion and $V$ is free. Such a decomposition is not unique, but we will fix one such decomposition. Fix a fundamental domain $\mathcal{D} \subseteq O_K$ as in \cite[Subsection 3.3]{KM} for the action of $V$ on $O_K$. We will recite the properties of the fundamental domain that we need.

\begin{lemma}
\label{lFund}
The fundamental domain $\mathcal{D} \subseteq O_K$ has the following properties
\begin{itemize}
\item for all $\alpha \in O_K$, there exists a unique $v \in V$ such that $v \alpha \in \mathcal{D}$. Furthermore, we have
\[
\{u \in O_K^\ast : u \alpha \in \mathcal{D}\} = \{\mu v : \mu \in T\};
\]
\item fix an integral basis $\eta = \{\eta_1, \dots, \eta_n\}$ of $O_K$. Then there exists a constant $C_\eta > 0$ such that $\alpha = a_1 \eta_1 + \dots + a_n \eta_n \in \mathcal{D}$ ($a_i \in \Z$) implies that $|a_i| \leq C_\eta \cdot N(\alpha)^{1/n}$.
\end{itemize}
\end{lemma}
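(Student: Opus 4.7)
My plan is to construct $\mathcal{D}$ explicitly via the logarithmic embedding and then verify that both bullet points follow from the definition. Let $r_1, r_2$ denote the number of real and complex places of $K$, set $n = r_1 + 2r_2 = [K:\Q]$, and write $e_i = 1$ for a real place and $e_i = 2$ for a complex place. Define the log embedding $\ell \colon O_K \setminus \{0\} \to \R^{r_1+r_2}$ by $\ell(\alpha)_i = \log|\sigma_i(\alpha)|$, so that $\sum_i e_i\, \ell(\alpha)_i = \log N(\alpha)$. By Dirichlet's unit theorem, $\ell(V)$ is a full-rank lattice in the hyperplane $H = \{x \in \R^{r_1+r_2} : \sum_i e_i x_i = 0\}$, and I fix once and for all a bounded fundamental parallelotope $\Pi \subseteq H$ for the translation action of $\ell(V)$. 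Letting $\pi \colon \R^{r_1+r_2} \to H$ denote projection along the vector $(1,\dots,1)$, I define
\[
\mathcal{D} := \{0\} \cup \{\alpha \in O_K \setminus \{0\} : \pi(\ell(\alpha)) \in \Pi\}.
\]

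For the first bullet point, the key observation is that $\pi(\ell(v\alpha)) = \ell(v) + \pi(\ell(\alpha))$ for $v \in V$, since $\ell(v) \in H$. Thus the existence and uniqueness of $v \in V$ with $v\alpha \in \mathcal{D}$ is exactly the fundamental domain property of $\Pi$ for $\ell(V)$. For a torsion unit $\mu \in T$, $\mu$ is a root of unity, so $|\sigma_i(\mu)| = 1$ for every $i$, giving $\ell(\mu) = 0$ and hence $\ell(\mu\alpha) = \ell(\alpha)$. Consequently $\mathcal{D}$ is $T$-invariant, and writing any $u \in O_K^\ast$ with $u\alpha \in \mathcal{D}$ as $u = \mu v'$ with $\mu \in T$ and $v' \in V$, we deduce $v'\alpha \in \mathcal{D}$, which forces $v' = v$ by uniqueness and so $u \in Tv$.

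For the second bullet point, I would compute $\pi(\ell(\alpha))$ explicitly: the unique scalar $c$ with $\ell(\alpha) - c(1,\dots,1) \in H$ satisfies $\sum_i e_i(\ell(\alpha)_i - c) = 0$, which yields $c = \tfrac{1}{n}\log N(\alpha)$ since $\sum_i e_i = n$. Hence $\pi(\ell(\alpha))_i = \log|\sigma_i(\alpha)| - \tfrac{1}{n}\log N(\alpha)$, and boundedness of $\Pi$ gives $|\sigma_i(\alpha)| \leq C' N(\alpha)^{1/n}$ for every archimedean place $\sigma_i$ and every nonzero $\alpha \in \mathcal{D}$. The Minkowski embedding $K \hookrightarrow \R^{r_1}\times\C^{r_2} \cong \R^n$ sends $\eta_1,\dots,\eta_n$ to a basis of $\R^n$, so inverting this fixed matrix expresses each coefficient $a_i$ as a bounded real-linear combination of the real and imaginary parts of the $\sigma_k(\alpha)$, and $|a_i| \leq C_\eta N(\alpha)^{1/n}$ follows.

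The only delicate point is the choice of projection $\pi$ along $(1,\dots,1)$ rather than, say, orthogonal projection in $H$: this is what guarantees that each individual coordinate $\log|\sigma_i(\alpha)|$ differs from $\tfrac{1}{n}\log N(\alpha)$ by a uniformly bounded amount, which is precisely what produces the exponent $1/n$ (rather than $1/(r_1+r_2)$) in the final estimate. Everything else is routine bookkeeping with the log embedding.
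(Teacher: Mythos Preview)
Your argument is correct and is the standard construction of such a fundamental domain via the logarithmic embedding; the paper itself gives no proof at all, merely citing \cite[Lemma 3.5]{KM}, whose construction is essentially the one you wrote down. One tiny quibble: the uniqueness claim in the first bullet tacitly requires $\alpha \neq 0$, since for $\alpha = 0$ every $v \in V$ works; you include $0 \in \mathcal{D}$ but do not comment on this harmless edge case.
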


\begin{proof}
This is \cite[Lemma 3.5]{KM}.
\end{proof}

We will consider bilinear sums of the type
\[
B(X, Y, \delta, \epsilon, t_1, t_2) = \sum_{\substack{w \in t_1 \mathcal{D}(X) \\ w \equiv \delta \bmod M}} \sum_{\substack{z \in t_2 \mathcal{D}(Y) \\ z \equiv \epsilon \bmod M}} \alpha_w \beta_z \gamma(w, z),
\]
where $(\alpha_w)_w$ and $(\beta_z)_z$ are sequences of complex numbers bounded in absolute value by $1$, $\delta$ and $\epsilon$ are invertible congruence classes modulo $M$, $t_1$ and $t_2$ are fixed elements of $T$ (so $t_i \mathcal{D}$ is a translate of the fundamental domain) and $X, Y \geq 2$ are real numbers. Here we use the notation $(t_i \mathcal{D})(X)$ for the subset of $\alpha \in t_i \mathcal{D}$ with $N(\alpha) \leq X$. 

\begin{proposition}
\label{pLargeSieve}
Assume that $X \leq Y$. Then we have
\[
|B(X, Y, \delta, \epsilon, t_1, t_2)| \ll \left(X^{\frac{-C_2}{3n}} + Y^{\frac{-1}{6n}}\right) XY (\log XY)^{C_K},
\]
where $n = [K : \Q]$ and $C_K$ is a constant depending only on $K$. The implied constant depends only on $K$, $M$ and the constants $C_1, C_2$.
\end{proposition}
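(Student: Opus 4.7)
My approach is the standard bilinear-form large-sieve argument, adapted to the number-field setting via the fundamental domain $\mathcal{D}$ of Lemma \ref{lFund}, and exploiting property (P2) to extract cancellation in the second variable of $\gamma$. Since the orthogonality is in the $z$-variable and $X \leq Y$, I apply Cauchy--Schwarz to the outer sum over $z$, paying a factor $Y$:
\begin{equation*}
|B|^2 \leq \bigl(\#\{z \in t_2\mathcal{D}(Y) : z \equiv \epsilon \bmod M\}\bigr) \cdot \sum_z \Bigl| \sum_w \alpha_w \gamma(w,z) \Bigr|^2 \ll Y \sum_{w_1, w_2} \alpha_{w_1} \overline{\alpha_{w_2}} T(w_1, w_2),
\end{equation*}
where $T(w_1, w_2) = \sum_z \gamma(w_1, z) \overline{\gamma(w_2, z)}$, the $z$-sum being over the same range as above. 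Since $\gamma$ takes values in $\{0\} \cup \mu_\ell$ one has $\overline{\gamma(w_2, z)} = \gamma(w_2, z)^{\ell-1}$, and multiplicativity (P1) in the first argument collapses the product to $\gamma(u, z)$ with $u := w_1 w_2^{\ell-1}$.

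I then split the sum over pairs according to whether $N(u) \in A_{\text{bad}}$. For the \emph{bad} pairs the trivial bound $|T(w_1, w_2)| \ll Y$ suffices, and the number of such pairs $(w_1, w_2) \in (t_1\mathcal{D}(X))^2$ is controlled by (P3) combined with standard divisor bounds for $\#\{(w_1, w_2) : N(w_1 w_2^{\ell-1}) = n\}$. This produces a contribution to $|B|^2$ of shape $Y \cdot X^{2 - C_2'}(\log X)^{O(1)}$ for some $C_2' > 0$ depending on $C_2$ and $\ell$; taking square roots matches the first error term $X^{-C_2/(3n)} XY (\log XY)^{C_K}$.

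For the \emph{good} pairs ($N(u) \notin A_{\text{bad}}$), the orthogonality in (P2) yields cancellation in $T(w_1, w_2)$. Since $w \in S_{MO_K}$ implies $\gcd(M, N(u)) = 1$, CRT identifies residues modulo $(MN(u))$ compatible with $\epsilon \bmod M$ with residues modulo $(N(u))$; partitioning $z \in t_2\mathcal{D}(Y) \cap (\epsilon + (M))$ into such classes and invoking Lemma \ref{lFund} to count lattice points per class, the main term of $Y/(MN(u))^n$ per class collapses by (P2) (after isolating $\epsilon \bmod M$ by Fourier inversion on $(O_K/M)^*$ or direct decomposition), leaving a boundary error of order $N(u)^{O(1)} Y^{(n-1)/n}$. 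Summing over good pairs using the crude bound $N(u) \ll X^{O(1)}$, tracking divisor-function weights and optimising the cut-off gives the second error term $Y^{-1/(6n)} XY (\log XY)^{C_K}$.

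The principal technical difficulty lies in the interplay between the two moduli $M$ and $N(u)$ in (P2): the vanishing statement is phrased over residues merely coprime to $M$, whereas we need the sum restricted to a single class $\epsilon$. The CRT decomposition, available thanks to $\gcd(M, N(u)) = 1$, is the crucial device, possibly supplemented by Fourier inversion over $(O_K/M)^*$. Equally delicate is the book-keeping for the divisor-type counts together with the precise optimisation that produces the exponents $-C_2/(3n)$ and $-1/(6n)$, both carrying the factor $n$ from Lemma \ref{lFund}'s geometric estimate on coordinates in the fundamental domain.
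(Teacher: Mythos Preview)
Your outline has the right shape---Cauchy--Schwarz, then split pairs into bad and good and invoke (P3) and (P2)---but the good-pair estimate cannot close without an amplification step that you have omitted. After your single Cauchy--Schwarz you must control $T(w_1,w_2)=\sum_{z}\gamma(u,z)$ over $z\in t_2\mathcal{D}(Y)$, where $u=w_1w_2^{\ell-1}$ has $N(u)\le X^{\ell}$. The cancellation in (P2) only becomes usable after tiling the $z$-region by boxes of side $MN(u)$, and the boundary error is then of order $MN(u)\,Y^{(n-1)/n}$. Summing this over the $\asymp X^2$ good pairs gives a contribution to $|B|^2$ of size $X^{2+\ell}Y^{2-1/n}$, which is \emph{worse} than the trivial bound $X^2Y^2$ unless $Y^{1/n}\gg X^{\ell}$. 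Since the only hypothesis is $X\le Y$, no ``optimisation of the cut-off'' can rescue this: the modulus is simply too large relative to the box. (Your bad-pair count is also shaky for the same reason: with $N(u)\le X^{\ell}$, property (P3) alone gives only $\ll X^{\ell(1-C_2)+o(1)}$ bad values, which need not beat $X^{2}$ when $\ell\ge 3$.)

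The paper fixes this by applying H\"older's inequality with an exponent $k$ to the $w$-sum \emph{before} Cauchy--Schwarz. This replaces the inner $z$-sum over $t_2\mathcal{D}(Y)$ by a $k$-fold product, so that after swapping and applying Cauchy--Schwarz one is summing $\gamma(w_1w_2,z)$ over $z$ in a box $\mathcal{B}(Y^k,C)$ of side $\asymp Y^{k/n}$. With $k=3n$ one has $Y^{k/n}=Y^{3}\ge X^{3}$, which comfortably dominates the modulus $N(w_1w_2)\le X^{2}$; the lattice-point error $X^{2}Y^{k(1-1/n)}$ is now a genuine saving over $Y^{k}$. Extracting $2k$-th roots at the end is precisely what produces the exponents $-C_2/(3n)$ and $-1/(6n)$ in the statement. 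As a side remark, the CRT/Fourier gymnastics you anticipate for the constraint $z\equiv\epsilon\bmod M$ are not needed in the paper's version: that congruence is absorbed into the convolved weights $\beta'_z$ before Cauchy--Schwarz and then discarded when one passes to the full box $\mathcal{B}(Y^k,C)$, retaining only coprimality to $M$---which is exactly the hypothesis under which (P2) gives vanishing.
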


Since $0 < C_2 < 1$, we achieve a power saving in both $X$ and $Y$. Careful scrutiny of the proof shows that the constant $C_K$ may be taken to depend at most on $n$. However, the same can not be said of the implicit constant, which will likely depend on the regulator of $K$ with the current argument. This will not be a cause for concern, however, as in our application this field will be a fixed cyclotomic field of the form $\Q(\zeta_\ell)$. 

\begin{remark}
This result should be compared with Heath-Brown's sieve for quadratic characters, see \cite[Corollary 4]{HB} or its number field analogues (e.g. \cite[Theorem 1.1]{GL} and \cite[Theorem 1.3]{Blomer}). There one considers bilinear sums of the Legendre symbol $\left( \frac{z}{w} \right)$ (or an analogous Hecke family), which naturally satisfies properties (P1) -- (P3). Heath-Brown is able to produce a greater power saving, however in his result the $(\log XY)^{C_{K,k}}$ term is replaced by $(XY)^{\epsilon}$. It will be crucial in our application that this term is at worst a fixed power of the logarithm.
\end{remark}

\begin{proof}[Proof of Proposition \ref{pLargeSieve}]
The argument is a minor generalisation of \cite[Proposition 3.6]{KM}. Pick an integer $k \geq 1$ that we will optimise later. We start the proof by applying H\"older's inequality to
\[
1 = \frac{k - 1}{k} + \frac{1}{k},
\]
which gives
\begin{align*}
|B(X, Y, \delta, \epsilon, t_1, t_2)|^k &\leq \left(\sum_{\substack{w \in t_1 \mathcal{D}(X) \\ w \equiv \delta \bmod M}} |\alpha_w| \cdot \left|\sum_{\substack{z \in t_2 \mathcal{D}(Y) \\ z \equiv \epsilon \bmod M}} \beta_z \gamma(w, z)\right|\right)^k \\
&\leq \left(\sum_{\substack{w \in t_1 \mathcal{D}(X) \\ w \equiv \delta \bmod M}} |\alpha_w|^{\frac{k}{k - 1}}\right)^{k - 1} \cdot \left(\sum_{\substack{w \in t_1 \mathcal{D}(X) \\ w \equiv \delta \bmod M}} \left|\sum_{\substack{z \in t_2 \mathcal{D}(Y) \\ z \equiv \epsilon \bmod M}} \beta_z \gamma(w, z)\right|^k\right) \\
&\ll X^{k - 1} \cdot \sum_{\substack{w \in t_1 \mathcal{D}(X) \\ w \equiv \delta \bmod M}} \left|\sum_{\substack{N(z) \leq Y^k}} \beta'_z \gamma(w, z)\right|,
\end{align*}
where
\[
\beta'_z := \sum_{\substack{z_1 \cdot \ldots \cdot z_k = z \\ z_i \in t_2 \mathcal{D}(Y) \\ z_i \equiv \epsilon \bmod M}} \beta_{z_1} \cdot \ldots \cdot \beta_{z_k}.
\]
Here we used property (P1) to expand the $k$-fold product. 

Fix an integral basis $\eta_1, \dots, \eta_n$ of $O_K$, where $n = [K : \Q]$. We call an element $z$ $(C, Y)$-well-balanced if
\begin{align}
\label{eIntegral}
z = a_1\eta_1 + \dots + a_n\eta_n, \quad a_i \in \Z
\end{align}
implies that $|a_i| \leq C Y^{1/n}$. From the construction of the fundamental domain $\mathcal{D}(Y)$, see the second property of Lemma \ref{lFund}, it follows that there exists a constant $C > 0$ depending only on $k$, $K$ and the choice of integral basis such that $\beta'_z = 0$ if $z$ is not $(C, Y^k)$-well-balanced. Fix such a choice of absolute constant $C$. Then, we may assume from now on that we are summing over all $z$ such that $N(z) \leq Y^k$ and $z$ is $(C, Y^k)$-well-balanced. Write $\mathcal{B}(Y, C)$ for the set of $z \in O_K$ such that $|a_i| \leq CY^{1/n}$ upon expanding $z$ as in equation (\ref{eIntegral}) and such that $z$ is coprime with $M$. For the remainder of this proof, all our implied constants may depend on $K$, $M$, $k$, $C_1$, $C_2$, our choice of $C$ and our choice of integral basis. We rewrite
\begin{align*}
\sum_{\substack{w \in t_1 \mathcal{D}(X) \\ w \equiv \delta \bmod M}} \left|\sum_{\substack{N(z) \leq Y^k \\ z \in \mathcal{B}(Y^k, C)}} \beta'_z \gamma(w, z)\right| 
&= \sum_{\substack{w \in t_1 \mathcal{D}(X) \\ w \equiv \delta \bmod M}} \epsilon(w) \sum_{\substack{N(z) \leq Y^k \\ z \in \mathcal{B}(Y^k, C)}} \beta'_z \gamma(w, z) \\
&= \sum_{\substack{N(z) \leq Y^k \\ z \in \mathcal{B}(Y^k, C)}} \beta'_z \sum_{\substack{w \in t_1 \mathcal{D}(X) \\ w \equiv \delta \bmod M}} \epsilon(w) \gamma(w, z),
\end{align*}
where $\epsilon(w)$ are complex numbers of absolute value $1$. We now drop the condition $N(z) \leq Y^k$. This does not change the sum, since $\beta'_z = 0$ if $N(z) > Y^k$. Then, the Cauchy--Schwarz inequality yields
\begin{multline*}
|B(X, Y, \delta, \epsilon, t_1, t_2)|^{2k} \ll \\
X^{2k - 2} \cdot \left(\sum_{\substack{z \in \mathcal{B}(Y^k, C)}} |\beta'_z|^2\right) \cdot \left(\sum_{\substack{z \in \mathcal{B}(Y^k, C)}} \sum_{\substack{w_1 \in t_1 \mathcal{D}(X) \\ w_1 \equiv \delta \bmod M}} \sum_{\substack{w_2 \in t_1 \mathcal{D}(X) \\ w_2 \equiv \delta \bmod M}} \epsilon(w_1) \epsilon(w_2) \gamma(w_1w_2, z)\right)
\end{multline*}
thanks to property (P1). We bound the former sum by
\[
\sum_{\substack{z \in \mathcal{B}(Y^k, C)}} |\beta'_z|^2 \ll Y^k (\log Y)^{C_{K, k}},
\]
where $C_{K, k}$ is an effectively computable constant depending only on $K$ and $k$. For the latter sum we invert the order of summation to get
\[
\sum_{\substack{w_1 \in t_1 \mathcal{D}(X) \\ w_1 \equiv \delta \bmod M}} \sum_{\substack{w_2 \in t_1 \mathcal{D}(X) \\ w_2 \equiv \delta \bmod M}} \epsilon(w_1) \epsilon(w_2) \sum_{z \in \mathcal{B}(Y^k, C)} \gamma(w_1w_2, z).
\]
We have the estimates
\[
\sum_{z \in \mathcal{B}(Y^k, C)} \gamma(w_1w_2, z) \ll
\left\{
	\begin{array}{ll}
		Y^k  & \mbox{if } N(w_1w_2) \in A_{\textup{bad}} \\
		\sum_{i = 1}^n X^{2i} Y^{k(1 - \frac{i}{n})} & \mbox{if } N(w_1w_2) \not \in A_{\textup{bad}}.
	\end{array}
\right.
\]
Indeed, the first inequality is just the trivial bound. For the second inequality, we use property (P2) and split $\mathcal{B}(Y^k, C)$ into boxes of side length $MN(w_1w_2) \leq MX^2$. 

From now on we shall take $k \geq 2n$. Hence we have $Y^{k/n} \geq X^2$, since we assumed that $Y \geq X$. Therefore the last bound can be simplified to $X^2 Y^{k(1 - 1/n)}$. Thanks to property (P3), we get the bound
\[
\sum_{\substack{w_1 \in t_1 \mathcal{D}(X) \\ w_1 \equiv \delta \bmod M}} \sum_{\substack{w_2 \in t_1 \mathcal{D}(X) \\ w_2 \equiv \delta \bmod M}} \mathbf{1}_{N(w_1w_2) \in A_{\text{bad}}} \ll_{K, k, C_1} X^{2 - 2C_2} (\log X)^{C_{K, k}}
\]
for a potentially different effectively computable constant $C_{K, k}$ depending only on $K$ and $k$. This shows that
\begin{multline*}
\sum_{\substack{w_1 \in t_1 \mathcal{D}(X) \\ w_1 \equiv \delta \bmod M}} \sum_{\substack{w_2 \in t_1 \mathcal{D}(X) \\ w_2 \equiv \delta \bmod M}} \epsilon(w_1) \epsilon(w_2) \sum_{z \in \mathcal{B}(Y^k, C)} \gamma(w_1w_2, z) \ll \\
\left(X^{2 - 2C_2} Y^k + X^4 Y^{k(1 - 1/n)}\right) (\log XY)^{C_{K, k}}.
\end{multline*}
We conclude that
\[
|B(X, Y, \delta, \epsilon, t_1, t_2)|^{2k} \ll \left(X^{2k - 2C_2} Y^{2k} + X^{2k + 2} Y^{2k - \frac{k}{n}}\right) (\log XY)^{C_{K, k}}.
\]
We take $k = 3n$, which depends only on $K$, to finish the proof of the proposition.
\end{proof}

\section{Reduction to multicyclic extensions}
\label{sec:reduction}
The aim of this section is to reduce our main theorem to the case of multicyclic extensions. This is not quite possible, but instead we may reduce to multicyclic extensions, where the decomposition group condition is replaced by a splitting condition depending on $p \bmod M$, where we think of $M$ as being (almost) fixed. We formalise these type of conditions in our next definition. Write $\mathcal{P}(X)$ for the power set of a set $X$.

\begin{mydef}
Let $A = \mathbb{F}_\ell^n$ for some prime $\ell$ and integer $n > 0$. Write $\mathcal{S}$ for the set of subspaces of $A$ of dimension at most $1$. Also write $\mathbb{P}$ for the set of prime numbers. Let $M \in \Z_{\geq 1}$ and let $B \subseteq A$ be a subspace. Let $f: \mathbb{P} \times \mathcal{S} \rightarrow \sqcup_{S \in \mathcal{S}} \mathcal{P}(A/S)$ be a function satisfying: 
\begin{itemize}
\item we have $f(p, S) \subseteq A/S$ for all $(p, S)$. Furthermore, if $S \cap B = \{0\}$, then $f(p, S)$ is a coset of $B$ viewed as subgroup of $A/S$. If $S \cap B \neq \{0\}$, then $f(p, S) = A/S$;
\item for every fixed $S \in \mathcal{S}$, the function $p \mapsto f(p, S)$ depends only on $p \bmod M$. 
\end{itemize}
Let $g: \{p \in \mathbb{P} : p \mid M\} \rightarrow \{\emptyset \subset F \subseteq \mathbb{F}_\ell^n\}$. We say that the pair $(f, g)$ is a \emph{congruence function for $M$}. Moreover, let $K$ be an $A$-extension of $\Q$ given by $\phi \in \textup{Epi}(G_\Q, A)$. If $p$ is a prime, we write $K_p$ for the extension of $\Q$ corresponding to $G_\Q \xrightarrow{\phi} A \rightarrow A/\phi(I_p)$. We say that $K$ has the correct Frobenius elements for $(f, g)$ (abbreviated as $(f, g)$-correct or simply $f$-correct) if
\begin{itemize}
\item we have
\[
\textup{Frob}_{K_p/\Q}(p) \in f(p, \phi(I_p))
\]
for all $p$ dividing the discriminant of $K$ such that $p$ is coprime to $2M$;
\item we have that all $p \mid M$ are unramified in $K$ and that
\[
\textup{Frob}_{K/\Q}(p) \in g(p)
\]
for all $p$ dividing $M$, assuming that $p \neq 2$ or $\ell \neq 2$.
\end{itemize}
\end{mydef}

Observe that $\phi(I_p)$ can have dimension greater than $1$ only if $p = \ell = 2$. 

In the coming sections we will show the following result. We set $d(\ell)$ equal to $16$ if $\ell = 2$ and $1$ if $\ell > 2$. Let $\Delta = \Delta_{\ell}$ be the unique strongly multiplicative function satisfying $\Delta(p) = p$ for $p \neq \ell$ and $\Delta(\ell) = \ell^2$.

\begin{theorem}
\label{tMultiReduction}
Let $C > 0$. Let $A = \mathbb{F}_\ell^n$ for some prime $\ell$ and integer $n > 0$ and let $B$ be a subgroup of $A$. Then there exists $\delta > 0$ such that the following holds. Let $(f, g)$ be a congruence function for $M \in \Z_{\geq 1}$. Let $(c_a)_{a \in A - \{0\}}$ be a vector of integers. Let $G$ be a subgroup of $(\Z/M\Z)^\ast$. Let $H_a$ be a union of cosets of $G$ such that $|H_a| = |H_{a'}|$ for all $a, a' \in A - B$. Assume that $M \leq (\log X)^C$ and that $[(\Z/M\Z)^\ast : G] \leq C$. Let $S$ be a subset of $A - \{0\}$. Further assume that
\begin{align}
\label{eCongruenceFurtherFactor}
f(p, \langle a \rangle) = f(p', \langle a \rangle)
\end{align}
if $p \bmod M$ and $p' \bmod M$ are in the same coset of $G$. Then there is $C_{\textup{lead}} \geq 0$ such that for all real numbers $X \geq 100$
\[
\sum_{\substack{\mathbf{v} = (v_a)_{a \in A - \{0\}} \in \mathcal{A} \\ \prod_{a \in A - \{0\}} \Delta(|v_a|) \leq X \\ v_a \neq 1 \ \forall a \in S \\ v_a \equiv c_a \bmod d(\ell) \\ \gcd(v_a, M) = 1 \\ p \mid v_a \Rightarrow p \bmod M \in H_a}} \mathbf{1}_{\textup{Par}(\mathbf{v}) \ f\textup{-correct}} = C_{\textup{lead}} X (\log X)^{\alpha-1} + O\left(X (\log X)^{\alpha -1- \delta}\right),
\]
where
\[
\alpha =  \sum_{a \in B - \{0\}} \frac{|\textup{Lift}(H_a)|}{\varphi(\textup{lcm}(M, \ell))} + \sum_{a \in A - B} \frac{|B|}{\ell^{n - 1}} \cdot \frac{|\textup{Lift}(H_a)|}{\varphi(\textup{lcm}(M, \ell))}.
\]
Here $\textup{Lift}(G)$ denotes the subset of $(\Z/\textup{lcm}(M, \ell)\Z)^\ast$ consisting of elements that map to $1$ in $(\Z/\ell\Z)^\ast$ and to an element of $G$ in $(\Z/M\Z)^\ast$. The implied constant depends only on $C$ and $A$.

Furthermore, the leading constant $C_{\textup{lead}}$ does not depend on $S$, and there exists a constant $C_{\textup{max}}$, depending only on $\ell$ and $n$, such that $C_{\textup{lead}} \leq C_{\textup{max}}$. We have $C_{\textup{lead}} > 0$ if $\ell > 2$ or
\begin{itemize}
\item $f(p, S) = B$ for all $p$ coprime to $M$ and all $S$ such that $S \cap B = \{0\}$;
\item $0 \in g(p)$ for all $p \mid M$;
\item $c_a = 1$ for all $a \in A - \{0\}$.
\end{itemize}
\end{theorem}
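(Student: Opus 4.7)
The plan is to encode the $f$-correctness condition as a multiplicative character sum, extract the main term from the principal-character contribution via Theorem \ref{thm:GK}, and bound the non-principal contribution via the large sieve of Proposition \ref{pLargeSieve}.

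For each prime $p \mid v_a$ with $a \in A - B$, the constraint $\textup{Frob}_{K_p/\Q}(p) \in f(p, \langle a \rangle)$ cuts out a coset of $B$ inside $A/\langle a \rangle$; by orthogonality it is detected by a sum over characters of $A$ trivial on $\langle a \rangle$ and on $B$. Using equation \eqref{eFundPar}, pulling such a character $\chi$ back along $\textup{Par}(\mathbf{v})$ and evaluating at $\textup{Frob}_p$ produces a product of $\ell$-th power residue symbols between $p$ and the primes dividing the other $v_b$. The constraints $v_a \equiv c_a \bmod d(\ell)$, $p \bmod M \in H_a$, and the Frobenius condition at the finitely many primes dividing $M$ are handled analogously by additive and Dirichlet characters. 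After expansion, the whole count becomes an outer sum over families of local character data $(\chi_{p,a})$ and an inner sum over $\mathbf{v}$.

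The main term is the contribution of the trivial character data, in which each prime $p \mid v_a$ with $a \in A - B$ contributes the constant $|B|/\ell^{n-1}$. Exploiting the pairwise coprimality of the $v_a$, this collapses to a sum of a single multiplicative function $F$ in the variable $V = \prod_a |v_a|$, whose local factor at a prime $p$ coprime to $M$ with $p \equiv 1 \bmod \ell$ equals
\[
F(p) \;=\; \sum_{\substack{a \in B - \{0\} \\ p \bmod M \in H_a}} 1 \;+\; \frac{|B|}{\ell^{n-1}} \sum_{\substack{a \in A - B \\ p \bmod M \in H_a}} 1 .
\]
Dirichlet's theorem in $(\Z/\textup{lcm}(M,\ell)\Z)^{\ast}$ then gives $\frac{1}{x}\sum_{p \leq x} F(p) \log p \to \alpha$, uniformly in the range $M \leq (\log X)^C$. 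Applying Theorem \ref{thm:GK} with $Q$ a suitable power of $M$ yields the main term $C_{\textup{lead}} X (\log X)^{\alpha - 1}$ together with the admissible error, and expresses $C_{\textup{lead}}$ as an explicit Euler product. The hypotheses in the positivity clause guarantee that each local Euler factor is non-zero (there is at least one admissible $a$ at every prime, and the congruence classes $c_a$ are realised), giving $C_{\textup{lead}} > 0$; the uniform bound $C_{\textup{lead}} \leq C_{\textup{max}}$ follows from trivial majorisation of this Euler product by a fixed multiple of $\prod_p (1 - 1/p)^{-C_{\textup{max}}'}$ with $C_{\textup{max}}'$ depending only on $\ell$ and $n$.

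For each non-trivial character datum there exists a pair $(p, a)$ with $\chi_{p,a}$ non-principal, producing oscillation in the variables $v_b$ on which this character genuinely depends. Reinterpreting the $\ell$-th power residue symbols as Hecke characters on $\Z[\zeta_\ell]$ places the resulting inner sum in the setting of Proposition \ref{pLargeSieve}; via a H\"older--Cauchy bilinearisation, isolating one ``large'' coordinate and grouping the rest, the non-principal contributions acquire a power saving that translates into an overall gain of $(\log X)^{-\delta}$ after summing over the remaining degrees of freedom. The side condition $v_a \neq 1$ for $a \in S$ is removed by inclusion--exclusion: forcing $v_a = 1$ strictly reduces the log-exponent $\alpha$, so those terms are absorbed into the error, which explains why $C_{\textup{lead}}$ is independent of $S$. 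The main obstacle is carrying out the bilinearisation uniformly in the non-principal character data, so that the power saving from Proposition \ref{pLargeSieve} survives a union bound over all such data; this requires a careful choice of the dominant coordinate and verification that the bad set $A_{\textup{bad}}$ attached to each character satisfies the density hypothesis (P3).
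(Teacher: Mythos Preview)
Your high-level strategy matches the paper's: expand the $f$-correctness indicator via orthogonality into a character sum, isolate the principal contribution as the main term via Theorem~\ref{thm:GK}, and show the rest oscillates. However, two essential ingredients are missing from your sketch, and without them the argument does not go through.

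\textbf{The combinatorial core is absent.} After expanding the indicator, the paper does not simply separate ``trivial'' from ``non-trivial'' character data. Instead it introduces new variables $w_{a,b}$ (one for each prime of $v_a$ according to which character vector was selected at that prime), so that the summand becomes a product of symbols $\psi_{w_{a_2,b_2},\ell,1}^{\langle b_1,a_2\rangle}(\mathrm{Frob}_{w_{a_1,b_1}})$. The key question is then: for which index sets can \emph{all} of the large $w_{a,b}$ be pairwise unlinked (i.e.\ $\langle b_1,a_2\rangle=0$, or the symmetric condition when $\ell=2$)? This is settled by the combinatorial Lemmas~\ref{lBlock} and~\ref{lBlock2}, which force the large index set to be exactly $\{(a,0):a\in\mathbb{F}_\ell^n-B\}$ (or, for $\ell=2$, the graph of an alternating map). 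These lemmas are the heart of the proof and are what pins down the main term; your proposal to ``isolate one large coordinate and group the rest'' does not capture this, and a naive union bound over character data fails because the number of character configurations grows with $\omega(\prod_a v_a)$.

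\textbf{The hypothesis $|H_a|=|H_{a'}|$ for $a,a'\in A-B$ is never invoked.} The paper uses it precisely to bound $N_{\mathrm{small}}(X)$, the contribution from tuples where fewer than $\ell^n-|B|$ of the $w_{a,b}$ are large. Without this equal-size assumption the trivial bound on $N_{\mathrm{small}}$ can exceed the main term (see the Remark following the theorem statement). Your outline gives no account of this step.

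A secondary omission: the oscillating terms are handled by a three-scale decomposition (small/medium/large), and the large sieve (Proposition~\ref{pLargeSieve}) is used only when \emph{both} linked variables are at least medium. When one linked variable is small but nonunit and the other is large, the paper instead applies Siegel--Walfisz over $\Q(\zeta_\ell)$, after a probabilistic argument to ensure the choices $\mathrm{Pref}_\ell(q)$ are not ``poor''. Your sketch relies on the large sieve alone, which does not cover this regime.
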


In our application, the $A$ in the theorem will be the $\ell$-torsion subgroup of a fixed abelian group $A$. In light of the work of Section \ref{sec:crit}, we will take $B$ to be $A[\ell] \cap \ell A$. Then, in order to study $A$-extensions failing to have the weak approximation condition, we must ensure that Frobenius lands in one of the acceptable classes, as specified in Theorem \ref{tLocalConditions}. This is the purpose of the notion of congruence functions and $f$-correctness. The conditions on primes dividing $v_a$ allow us to control the splitting behaviour of primes which do not ramify in the extension from $\Q$ to a fixed field of $A/A[\ell]$ but do ramify in the remaining $A[\ell]$-extension.

\begin{remark}
This theorem is in many ways best possible. Further uniformity in $M$ is plausible but likely out of reach given the current state of knowledge regarding zeroes of $L$-functions. Perhaps the most interesting condition in the theorem is that the $H_a$ all have the same cardinality for $a \in A - B$. Indeed, the theorem is no longer true if the $H_a$ are allowed to be of arbitrary cardinality. More precisely, the true log exponent may be bigger than $\alpha-1$.

Consider for example the situation that $B = \{0\}$ and $f(p, S)$ is always the zero element of $A/S$. Fix some $a \in A - \{0\}$. If one takes $|H_b|$ very small for $b \neq a$ and $|H_a|$ very large, then one can obtain a larger log exponent by taking $v_b = 1$ for $b \neq a$ and letting $v_a$ vary freely.  Note that this example critically depends both on the shape of the congruence function and the sizes of $|H_a|$.
\end{remark}

We will now show how to derive our main theorem from Theorem \ref{tMultiReduction}. It is important to remark that the exponents appearing in the discriminant, equation \eqref{eDisc}, are not equal. In fact, the exponent is minimised by taking $a$ to be in $A[\ell]$, where $\ell$ is the smallest prime divisor of $A$. In the language of \cite{Wood}, the discriminant is not a fair counting function. It is precisely for this reason that we may reduce to multicyclic extensions, which is certainly not possible for fair counting functions.

Write $\mathcal{B}$ for those tuples in $\mathcal{A}$ that give rise to an element in $\text{Epi}(G_\Q, A)$. 

\begin{theorem}
\label{tMain}
Let $A$ be a non-trivial, finite, abelian group. Write $\ell$ for the smallest prime divisor of $A$. Then there exists $C_{\textup{weak}} > 0$ and $\delta > 0$ such that for all $X \geq 100$
\[
\sum_{\substack{\mathbf{v} = (v_a)_{a \in A - \{0\}} \in \mathcal{B} \\ \textup{Disc}(\textup{Par}(\mathbf{v})) \leq X}} \mathbf{1}_{\textup{W.A. holds}} = C_{\textup{weak}} X^{\frac{\ell}{|A| \cdot (\ell - 1)}} (\log X)^{\alpha(A)-1} + O\left(X^{\frac{\ell}{|A| \cdot (\ell - 1)}} \cdot (\log X)^{\alpha(A) - 1- \delta}\right),
\]
where 
\[
\alpha(A) =  \sum_{a \in A[\ell] - \{0\}} \frac{|\{b \in A : \textup{Hom}(\wedge^2(A), \Q/\Z) \rightarrow \textup{Hom}(\wedge^2(\langle a, b \rangle), \Q/\Z) \textup{ is zero}\}|}{(\ell - 1) \cdot |A|}.
\]
The implied constant depends only on $A$.
\end{theorem}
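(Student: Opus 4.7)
The plan is to reduce to the multicyclic setting of Theorem \ref{tMultiReduction} applied to $A[\ell]$ by fixing the non-$\ell$-torsion coordinates of the parametrising tuple. Write $\mathbf{v} = (\mathbf{v}_L, \mathbf{v}_S)$ with $\mathbf{v}_L = (v_a)_{a \in A[\ell] - \{0\}}$ the ``large'' part and $\mathbf{v}_S = (v_a)_{a \in A - A[\ell]}$ the ``small'' part. The discriminant formula \eqref{eDisc} assigns to each $v_a$ the tame exponent $|A|(1 - 1/\ord(a))$, which strictly exceeds $|A|(\ell-1)/\ell$ for $a \notin A[\ell]$ since the minimality of $\ell$ among prime divisors of $|A|$ forces $\ord(a) > \ell$. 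Consequently, $\mathbf{v}_S$-parts carry disproportionately more discriminant per unit of magnitude, and the sum $\sum_{\mathbf{v}_S} D_S^{-\ell/(|A|(\ell-1))}$, where $D_S$ is the tame discriminant of $\mathbf{v}_S$, converges absolutely. I would therefore iterate: fix $\mathbf{v}_S$, let $\mathbf{v}_L$ vary, and then sum over $\mathbf{v}_S$.

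For each fixed admissible $\mathbf{v}_S$, let $L/\Q$ be the $A/A[\ell]$-extension cut out by $\pi \circ \textup{Par}(\mathbf{v})$ where $\pi : A \to A/A[\ell]$ (which depends only on $\mathbf{v}_S$ by linearity of \eqref{eFundPar} and the vanishing $\pi(a) = 0$ for $a \in A[\ell]$), and let $M$ be divisible by $2 \cdot |A|$ and by every prime dividing $\textup{Disc}(L)$. The remaining sum over $\mathbf{v}_L$ becomes a sum over $A[\ell]$-multicyclic tuples with entries coprime to $M$ whose prime factors lie in prescribed congruence classes modulo $M$, matching the outer setting of Theorem \ref{tMultiReduction}. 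The allowed class set $H_a$ consists of invertible classes modulo $M$ which are $\equiv 1 \bmod \ell$ and satisfy the compatibility conditions imposed at $p$ by the pre-chosen $\mathbf{v}_S$; by the symmetry of the parametrisation across elements of $A[\ell] - B$ with $B := A[\ell] \cap \ell A$, these sets share a common cardinality for $a \in A[\ell] - B$, as required.

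Weak approximation then translates into an $f$-correctness condition via Section \ref{sec:crit}. Weak approximation holds iff at every place $v$ the restriction map $\textup{Hom}(\wedge^2 A, \Q/\Z) \to \textup{Hom}(\wedge^2 \textup{im}(G_{\Q_v}), \Q/\Z)$ vanishes; unramified places contribute a cyclic image and so impose no condition, while at a tame prime $p$ with $p \mid v_a$ the image is $\langle a, \phi(\textup{Frob}_p) \bmod \langle a \rangle \rangle$. Theorem \ref{tLocalConditions} characterises the vanishing in terms of membership of this Frobenius class in either all of $A/\langle a \rangle$ (if $\langle a \rangle \cap B \neq 0$) or a specific coset of $B$ in $A/\langle a \rangle$ (if $\langle a \rangle \cap B = 0$). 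Using equation \eqref{eInertiaPar} the Frobenius decomposes into a $\mathbf{v}_S$-contribution (determined by $p \bmod M$) plus a $\mathbf{v}_L$-contribution, producing exactly the shape of the congruence function $(f, g)$ in Theorem \ref{tMultiReduction}. The finitely many wild primes dividing $2 \cdot |A|$ and the archimedean place are absorbed into $g$, with Theorem \ref{tWildDisc} providing the bounded discriminant contribution at wild primes.

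Applying Theorem \ref{tMultiReduction} to each fixed $\mathbf{v}_S$ with scale parameter $X' := (X/D_S)^{\ell/(|A|(\ell-1))}$ yields a main term $C_{\textup{lead}}(\mathbf{v}_S) X' (\log X')^{\alpha(\mathbf{v}_S) - 1}$ with an admissible error. Summing over $\mathbf{v}_S$, the inner constants combine with the weights $D_S^{-\ell/(|A|(\ell-1))}$ into an absolutely convergent Euler product producing $C_{\textup{weak}}$, the factor $X^{\ell/(|A|(\ell-1))}$ pulls out, and $\alpha(\mathbf{v}_S)$ re-assembles into $\alpha(A)$ after identifying $|H_a|/\varphi(M)$ with the natural density at each rational prime of Frobenius classes permitted by Theorem \ref{tLocalConditions}; the passage from $\textup{Hom}(G_\Q, A)$ to $\textup{Epi}(G_\Q, A)$ is handled by standard M\"obius inversion over subgroups of $A$. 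The main obstacle is the careful bookkeeping of uniformity as $\mathbf{v}_S$ varies: one must verify the equal-cardinality hypothesis for the $H_a$ consistently across $\mathbf{v}_S$, and ensure the error $O(X'(\log X')^{\alpha - 1 - \delta})$ from the inner theorem aggregates under the outer sum to a genuine log-power saving on the main term, which typically requires truncating at $D_S \leq X^c$ for a small $c > 0$ and bounding the large-$D_S$ tail by a direct estimate.
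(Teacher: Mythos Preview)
Your overall architecture matches the paper's proof: split off the non-$A[\ell]$ coordinates, fix them, translate weak approximation into an $f$-correctness condition via Theorem \ref{tLocalConditions}, and invoke Theorem \ref{tMultiReduction} on the $A[\ell]$-part. However, there are three points where your sketch either deviates from what actually works or leaves a genuine gap.

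\textbf{Truncation scale.} You propose truncating at $D_S \leq X^c$ for small $c>0$. This will not feed into Theorem \ref{tMultiReduction}: that theorem carries the hypothesis $M \leq (\log X)^C$, and your modulus $M$ is essentially the conductor of the $A/A[\ell]$-extension determined by $\mathbf{v}_S$, hence of size comparable to $D_S^{1/e}$ for some fixed $e$. A power-of-$X$ truncation leaves $M$ of polynomial size in $X$, and the inner theorem simply does not apply. The paper truncates instead at $\prod_{a \notin A[\ell]} v_a \leq (\log X)^{C_3}$ and then shows, by a direct upper bound dropping the weak-approximation condition entirely, that the complementary region contributes $O\bigl(X^{\ell/(|A|(\ell-1))}(\log X)^{(\ell^n-1)/(\ell-1)-1-\delta C_3}\bigr)$, which is negligible for $C_3$ large. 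The point is that the exponents $e_a$ for $a\notin A[\ell]$ strictly exceed $|A|(\ell-1)/\ell$, so the relevant Dirichlet series in the $w_a$'s converges and its tail past $(\log X)^{C_3}$ already saves a log-power.

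\textbf{Equal cardinality of the $H_a$.} You assert this ``by the symmetry of the parametrisation''. This is the one hypothesis of Theorem \ref{tMultiReduction} that requires real work: one must show $|\widetilde{\pi}(\mathrm{Adm}(a))| = |\widetilde{\pi}(\mathrm{Adm}(a'))|$ for all $a,a' \in A[\ell]\setminus B$, where $\widetilde{\pi}:A\to A/A[\ell]$. The paper does this by constructing, for any such $a,a'$, an automorphism $\psi\in\mathrm{Aut}(A)$ with $\psi(a)=a'$ (using that $a,a'\notin \ell A$), and then checking that $\psi$ carries $\mathrm{Adm}(a)$ bijectively to $\mathrm{Adm}(a')$ and preserves $A[\ell]$. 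This is not automatic from the parametrisation and you should supply the argument.

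\textbf{Positivity of $C_{\textup{weak}}$.} Your sketch produces $C_{\textup{weak}}$ as a convergent sum of nonnegative contributions $C_{\textup{lead}}(\mathbf{v}_S,\mathbf{c}) \cdot D_S^{-\ell/(|A|(\ell-1))}$, but does not explain why any single term is strictly positive. The paper invokes \cite[Proposition~5.5]{FLN} to produce an $A$-extension with all decomposition groups cyclic (hence satisfying weak approximation), and then arranges the data so that the final clause of Theorem \ref{tMultiReduction} (the sufficient conditions for $C_{\textup{lead}}>0$) applies to at least one summand.

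Two smaller remarks. First, the passage from $\mathrm{Hom}$ to $\mathrm{Epi}$ is not handled by M\"obius over subgroups of $A$ in the paper; rather, surjectivity becomes the condition $T + \langle a\in A[\ell]\setminus\{0\}: v_a\neq 1\rangle = A[\ell]$ for a subspace $T\supseteq B$ depending on $\mathbf{v}_S$, and this is detected by inclusion--exclusion over the set $S$ in Theorem \ref{tMultiReduction} (using that $C_{\textup{lead}}$ there is independent of $S$). Second, your appeal to \eqref{eInertiaPar} for the Frobenius decomposition is not quite right: that identity concerns the inertia generators $\sigma_\ell$, whereas here one needs $\mathrm{Par}(\mathbf{x}_Q)(\mathrm{Frob}_p)$ for the fixed-$\mathbf{v}_S$ map, which is what determines the shift $\epsilon_{a,p\bmod M,f}$ defining $f$.
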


\begin{proof}
Fix a large number $C_3$. We split the sum as
\begin{multline}
\label{eSplit}
\sum_{\substack{\mathbf{v} = (v_a)_{a \in A - \{0\}} \in \mathcal{B} \\ \textup{Disc}(\textup{Par}(\mathbf{v})) \leq X}} \mathbf{1}_{\textup{W.A. holds}} = \\
\sum_{\substack{\mathbf{v} = (v_a)_{a \in A - \{0\}} \in \mathcal{B} \\ \textup{Disc}(\textup{Par}(\mathbf{v})) \leq X \\ \prod_{a \not \in A[\ell]} v_a > (\log X)^{C_3}}} \mathbf{1}_{\textup{W.A. holds}} + \sum_{\substack{\mathbf{v} = (v_a)_{a \in A - \{0\}} \in \mathcal{B} \\ \textup{Disc}(\textup{Par}(\mathbf{v})) \leq X \\ \prod_{a \not \in A[\ell]} v_a \leq (\log X)^{C_3}}} \mathbf{1}_{\textup{W.A. holds}}.
\end{multline}
The main term will come from the latter sum in equation (\ref{eSplit}). We will start by bounding the former sum. It will be convenient to set
\[
e_a := |A| \cdot \left(1 - \frac{1}{\text{ord}(a)}\right).
\]
Note that $e_a$ is minimal for $a \in A[\ell] - \{0\}$. We bound the former sum in equation (\ref{eSplit}) by
\begin{align}
\label{eFirstBound}
\sum_{\substack{\mathbf{v} = (v_a)_{a \in A - \{0\}} \in \mathcal{B} \\ \textup{Disc}(\textup{Par}(\mathbf{v})) \leq X \\ \prod_{a \not \in A[\ell]} v_a > (\log X)^{C_3}}} 1 
\leq \sum_{\substack{(w_a)_{a \in A - A[\ell]} \\ w_a \in \Z_{\geq 1} \\ \prod_{a \not \in A[\ell]} w_a > (\log X)^{C_3}}} \sum_{\substack{(v_a)_{a \in A[\ell] - \{0\}} \\ v_a \text{ squarefee coprime} \\ p \mid v_a \Rightarrow p \equiv 0, 1 \bmod \ell \\ \prod_{a \in A[\ell] - \{0\}} v_a^{|A| \cdot \left(1 - \frac{1}{\ell}\right)} \leq \frac{X}{\prod_{a \in A - A[\ell]} w_a^{e_a}}}} 1.
\end{align}
Writing $|A[\ell]| = \ell^n$, classical analytic number theory shows that
\[
\sum_{\substack{(v_a)_{a \in A[\ell] - \{0\}} \\ v_a \text{ squarefree coprime} \\ p \mid v_a \Rightarrow p \equiv 0, 1 \bmod \ell \\ \prod_{a \in A[\ell] - \{0\}} v_a^{|A| \cdot \left(1 - \frac{1}{\ell}\right)} \leq \frac{X}{\prod_{a \in A - A[\ell]} w_a^{e_a}}}} 1 
\ll_\ell \left(\frac{X}{\prod_{a \in A - A[\ell]} w_a^{e_a}}\right)^{\frac{\ell}{|A| \cdot (\ell - 1)}} (\log X)^{\frac{\ell^n - 1}{\ell - 1} - 1}.
\]
Plugging this in equation (\ref{eFirstBound}) yields the bound
\[
X^{\frac{\ell}{|A| \cdot (\ell - 1)}} \cdot (\log X)^{\frac{\ell^n - 1}{\ell - 1} - 1} \sum_{\substack{(w_a)_{a \in A - A[\ell]} \\ w_a \in \Z_{\geq 1} \\ \prod_{a \not \in A[\ell]} w_a > (\log X)^{C_3}}} \frac{1}{\prod_{a \in A - A[\ell]} w_a^{\frac{\ell \cdot e_a}{|A| \cdot (\ell - 1)}}}.
\]
Since $e_a$ is minimal when $a \in A[\ell]$, we see that the sum
\[
\sum_{\substack{(w_a)_{a \in A - A[\ell]} \\ w_a \in \Z_{\geq 1}}} \frac{1}{\prod_{a \in A - A[\ell]} w_a^{\frac{\ell \cdot e_a}{|A| \cdot (\ell - 1)}}}
\]
converges, because the exponent of $w_a$ is bigger than $1$. Therefore, the tail can be bounded by
\[
X^{\frac{\ell}{|A| \cdot (\ell - 1)}} \cdot (\log X)^{\frac{\ell^n - 1}{\ell - 1} - 1}\sum_{\substack{(w_a)_{a \in A - A[\ell]} \\ w_a \in \Z_{\geq 1} \\ \prod_{a \not \in A[\ell]} w_a > (\log X)^{C_3}}} \frac{1}{\prod_{a \in A - A[\ell]} w_a^{\frac{\ell \cdot e_a}{|A| \cdot (\ell - 1)}}}
\ll 
X^{\frac{\ell}{|A| \cdot (\ell - 1)}} \cdot (\log X)^{\frac{\ell^n - 1}{\ell - 1} - 1 - \delta C_3}
\]
for some $\delta > 0$ depending only on $A$. This is negligible, provided that we pick $C_3$ sufficiently large in terms of $A$. It remains to deal with the latter sum in equation (\ref{eSplit}).

\subsection*{The main term: the reduction step}
To prepare for our application of Theorem \ref{tMultiReduction}, we start by fixing all the variables $v_a$ with $a \in A - A[\ell]$. The induced homomorphism $\varphi: G_\Q \rightarrow A \rightarrow A/A[\ell]$ depends only on the $v_a$ with $a \in A - A[\ell]$. Define for $a \in A$ the set
\[
\text{Adm}(a) := \{b \in A : \textup{Hom}(\wedge^2(A), \Q/\Z) \rightarrow \textup{Hom}(\wedge^2(\langle a, b \rangle), \Q/\Z) \textup{ is zero}\}.
\]
Write $\widetilde{\pi}: A \rightarrow A/A[\ell]$ for the natural quotient map. We assume that
\begin{enumerate}
\item[(P1)] the homomorphism $\varphi$ is surjective. Write $K$ for the fixed field of $\varphi$, which is an $A/A[\ell]$-extension of $\Q$;
\item[(P2)] for all $p \mid v_a$ we have $\varphi(\text{Frob}_p) \in \widetilde{\pi}(\text{Adm}(a))$. Here and from now on we have made a fixed choice of a prime ideal $\mathfrak{p}$ of $\overline{\Z}$ above $p$ and we have also made a fixed choice of an element $\text{Frob}_p \in \{\sigma : \sigma(\mathfrak{p}) = \mathfrak{p}\} \subseteq \Gal(\overline{\Q}/\Q)$ lifting the Frobenius automorphism of $\overline{\Z}/\mathfrak{p} \cong \overline{\mathbb{F}_p}$;
\item[(P3)] there exists an $A$-extension of $\Q$ containing $K$ that satisfies weak approximation. We remark that this condition in fact implies (P1) and (P2).
\end{enumerate}
If these conditions are not met, we may freely ignore the variables $v_a$, since they do not contribute to the counting function. There exists at least one tuple $v_a$ satisfying (P1), (P2) and (P3) by an application of \cite[Proposition 5.5]{FLN} with $k = \Q$ and $G = A$. Indeed, weak approximation certainly holds if all decomposition groups are cyclic. We will use this later on to guarantee that our leading constant $C_{\text{weak}}$ is strictly greater than $0$.

We will now work towards applying Theorem \ref{tMultiReduction}. We take $M = \mathfrak{f}(K)^2$, where $\mathfrak{f}(K)$ is the conductor of $K$. There exists a subgroup $G$ of $(\Z/M\Z)^\ast$ such that $p$ splits completely in $K$ if and only if $p \bmod M \in G$. Then we take for $a \in A[\ell] - \{0\}$
\[
H_a := \bigcup_{b \in \widetilde{\pi}(\text{Adm}(a))} \text{FrobInv}(b) \cdot G,
\]
where $\text{FrobInv}(b)$ is any prime $p$, not dividing $M$, with Artin symbol in $K$ equal to $b$. By choosing $C$ sufficiently large in terms of the abelian group $A$, we see that the condition $[(\Z/M\Z)^\ast : G] \leq C$ in Theorem \ref{tMultiReduction} holds.

We take $B = A[\ell] \cap \ell A$. We have to check that $|H_a| = |H_{a'}|$ for all $a, a' \in A[\ell] - B$. This is equivalent to showing that
\begin{align}
\label{epiCard}
|\widetilde{\pi}(\text{Adm}(a))| = |\widetilde{\pi}(\text{Adm}(a'))|
\end{align}
for all $a, a' \in A[\ell] - B$. If $\langle a \rangle = \langle a' \rangle$, then the result is correct. So now suppose that $\langle a \rangle \neq \langle a' \rangle$. We claim that there exists an automorphism $\psi$ of $A$ such that $\psi(a) = a'$. 

Let us first show that the claim implies equation (\ref{epiCard}). To this end, take an automorphism $\psi$ that sends $a$ to $a'$. Consider the induced map $\psi^\ast: \text{Adm}(a) \rightarrow \text{Adm}(a')$, which sends $b$ to $\psi(b)$. To check that this is well-defined, we further claim that
\[
\textup{Hom}(\wedge^2(A), \Q/\Z) \rightarrow \textup{Hom}(\wedge^2(\langle a, b \rangle), \Q/\Z) \textup{ is zero}
\]
if and only if
\[
\textup{Hom}(\wedge^2(A), \Q/\Z) \rightarrow \textup{Hom}(\wedge^2(\langle \psi(a), \psi(b) \rangle), \Q/\Z) \textup{ is zero}.
\]
The former statement is equivalent to all alternating maps $\lambda: A \times A \rightarrow \Q/\Z$ satisfying $\lambda(a, b) = 0$, while the latter statement is equivalent to all alternating maps $\lambda: A \times A \rightarrow \Q/\Z$ satisfying $\lambda(\psi(a), \psi(b)) = 0$. As $\lambda$ runs through all alternating maps, so does $\lambda(\psi(-), \psi(-))$ and therefore the above statements are all equivalent. This shows that $\psi^\ast$ is well-defined. It is now readily verified that $\psi^\ast$ is a bijection. We next observe that $\text{Adm}(a)$ is a subgroup of $A$. Therefore, to establish equation (\ref{epiCard}), it suffices to show that $|\text{Adm}(a) \cap A[\ell]| = |\text{Adm}(a') \cap A[\ell]|$. But this is true because $\psi$ is an automorphism that restricts to an isomorphism between $\text{Adm}(a)$ and $\text{Adm}(a')$. We conclude that the claim indeed implies equation (\ref{epiCard}).

Let us now prove the claim. If the injection
\[
0 \rightarrow \langle a, a' \rangle \rightarrow A
\]
is split, then it is straightforward to construct the desired bijection $\psi$. So now suppose that the above injection is not split. Because $a, a' \not \in B$, this means that there exist $n, m \in \Z$ both not divisible by $\ell$ such that $na + ma' \in \ell A$. Using that $a \not \in B$, we may decompose $A$ as an internal direct sum $\langle a \rangle \oplus C$ for some subgroup $C$ of $A$. Now consider the homomorphism $\psi$ that is the identity on $C$ and sends $a$ to $a'$.

It remains to prove that $\psi$ is surjective. By construction we have that $C \subseteq \text{im}(\psi)$ and that $a' \in \text{im}(\psi)$. Therefore it suffices to show that $a \in \text{im}(\psi)$. By the relation $na + ma' \in \ell A$ and the inclusion $\ell A \subseteq C$, we deduce that $na \in \text{im}(\psi)$. Because $n$ is coprime to $\ell$, we conclude that $a \in \text{im}(\psi)$. We have now proven the claim and thus equation (\ref{epiCard}).

We will now construct a congruence function $(f, g)$ for $M$.
Suppose that $p \neq 2$ is unramified in $K$. Then, if $p$ ramifies in $\text{Par}(\mathbf{v})$, we have $p \mid v_a$ for some $a \in A[\ell] - \{0\}$. Take $S = \langle a \rangle \in \mathcal{S}$, which has dimension $1$. For weak approximation to hold, we certainly must have that $\text{Frob}_{K/\Q}(p) \in \widetilde{\pi}(\text{Adm}(a))$ (or equivalently $p \bmod M \in H_a$), which we will assume from now on. We will now distinguish two cases.

\subsection*{Case I: $a \in B$}
In this case, Theorem \ref{tLocalConditions} yields that $\text{Frob}_p$ automatically lands in $\text{Adm}(a)$. Correspondingly, we take $f(p, S) = A/S$.

\subsection*{Case II: $a \not \in B$}
In this case Theorem \ref{tLocalConditions} tells us that $\text{Adm}(a) = \ell A + \langle a \rangle$. Define $\mathbf{x}_Q$ to be the unique tuple that equals $v_a$ for $a \in A - A[\ell]$ and equals $1$ for $a \in A[\ell] - \{0\}$. We claim that there exists an element $h(p, S) \in A[\ell]$ such that
\begin{align}
\label{eTwistingh}
\text{Par}(\mathbf{x}_Q)(\text{Frob}_p) + h(p, S) \in \text{Adm}(a).
\end{align}
By construction of $\mathbf{x}_Q$ we have that $\widetilde{\pi} \circ \text{Par}(\mathbf{x}_Q) = \varphi$. Therefore it follows from $\text{Frob}_{K/\Q}(p) \in \widetilde{\pi}(\text{Adm}(a))$ that equation (\ref{eTwistingh}) is true after applying $\widetilde{\pi}$, establishing the claim. Then we take $f(p, S)$ to be the coset $h(p, S) + B$. One may now directly verify that condition (\ref{eCongruenceFurtherFactor}) is satisfied.

\subsection*{The bad primes}
Next consider a prime $p \neq 2$ that divides the conductor of $K$, which implies that $p \mid v_a$ for some $a \not \in A[\ell]$. Let $g(p)$ be the subset of $a \in A[\ell]$ satisfying
\[
\text{Par}(\mathbf{x}_Q)(\text{Frob}_p) + a \in \text{Adm}(a).
\]
We deduce from property (P2) of $K$ that $g(p)$ is non-empty.

\subsection*{Completing the reduction step}
We now rewrite
\[
\sum_{\substack{\mathbf{v} = (v_a)_{a \in A - \{0\}} \in \mathcal{B} \\ \textup{Disc}(\textup{Par}(\mathbf{v})) \leq X \\ \prod_{a \not \in A[\ell]} v_a \leq (\log X)^{C_3}}} \mathbf{1}_{\text{W.A. holds}}
\]
as
\[
\sum_{\substack{(v_a)_{a \in A - A[\ell]} \\  \\ \prod_{a \not \in A[\ell]} v_a \leq (\log X)^{C_3}}} \sum_{(c_a)_{a \in A[\ell] - \{0\}} \in (\Z/d(\ell)\Z)^{A[\ell] - \{0\}}} \mathbf{1}_{(c_a) \text{ nice}} \sum_{\substack{\mathbf{w} = (w_a)_{a \in A[\ell] - \{0\}} \\ \mathbf{v} \in \mathcal{B} \\ \textup{Disc}(\textup{Par}(\mathbf{v})) \leq X \\ w_a \equiv c_a \bmod d(\ell) \\ p \mid v_a \Rightarrow p \bmod M \in H_a}} \mathbf{1}_{\text{W.A. holds}},
\]
where $\mathbf{v}$ is the tuple obtained by concatenating $(v_a)$ and $(w_a)$ and where we will soon define when $(c_a)$ is nice.

Fix the tuple $\mathbf{v}_Q = (v_a)_{a \in A - A[\ell]}$ satisfying the assumptions (P1), (P2) and (P3) and fix a tuple $(c_a)_{a \in A[\ell] - \{0\}}$. We first claim that the condition $\mathbf{1}_{\text{W.A. holds}}$ may be replaced by $\mathbf{1}_{\text{Par}(\mathbf{w}) \ f\text{-correct}}$. First of all we remark that $D_p$ is certainly cyclic if $p$ is unramified in $\text{Par}(\mathbf{v})$. At the odd ramified places this is true by construction of $f$. Finally, whether the natural restriction map $\textup{Hom}(\wedge^2(A), \Q/\Z) \rightarrow \textup{Hom}(\wedge^2(\textup{im}(G_{\Q_2}), \Q/\Z)$ is zero, is entirely determined by $\mathbf{v}_Q = (v_a)_{a \in A - A[\ell]}$ and $(c_a)_{a \in A[\ell] - \{0\}}$. Indeed, if $\ell > 2$, then this map is always zero as $2$ is necessarily unramified in $A$. If $\ell = 2$, then this follows from the fact that the restrictions of $\psi_{v_a, 2, 1}$ and $\psi_{v_b, 2, 1}$ to $G_{\Q_2}$ are equal if $v_a \equiv v_b \bmod 16$ (recall that $d(2) = 16$) combined with equation (\ref{eFundPar}). Now we simply define $(c_a)$ to be nice if the above map is zero.

Let us now explicate the condition $\mathbf{v} = (v_a)_{a \in A - \{0\}} \in \mathcal{B}$ in terms of the variables in $A[\ell] - \{0\}$. One directly checks that
\[
\mathbf{v} \in \mathcal{B} \Longleftrightarrow \langle a \in A - \{0\} : v_a \neq 1 \rangle = A
\]
for all $\mathbf{v} \in \mathcal{A}$. Therefore, since $\varphi$ is already assumed to be surjective by (P1), we have that there exists a subspace $T$, containing $B$ and depending on $\mathbf{v}_Q = (v_a)_{a \in A - A[\ell]}$, such that
\begin{align}
\label{eSurjCondition}
\mathbf{v} \in \mathcal{B} \Longleftrightarrow T + \langle a \in A[\ell] - \{0\} : v_a \neq 1 \rangle = A[\ell].
\end{align}

Next we compare $\text{Disc}(\text{Par}(\mathbf{v}))$ with $\text{Disc}(\text{Par}(\mathbf{w}))$. To do so, we compute the $p$-adic valuation of the discriminant using Theorem \ref{tDisc} for primes $p \nmid 2|A|$. Locally at $2$, we have just proven that the restriction of $\text{Par}(\mathbf{v})$ is unramified if $\ell > 2$ and completely determined by $c_a$ and $\mathbf{v}_Q = (v_a)_{a \in A - A[\ell]}$ if $\ell = 2$. For the odd places $p \mid |A|$, we compute the $p$-adic valuation of the discriminant using Theorem \ref{tWildDisc}. More precisely, if $\text{Par}(\sigma_p)$ is the identity, then $v_p(\text{Disc}(\text{Par}(\mathbf{v}))) = 0$. If instead $\text{Par}(\sigma_p) \in A - A[\ell]$, then $v_p(\text{Disc}(\text{Par}(\mathbf{v})))$ is determined by $\mathbf{v}_Q$ as a consequence of Theorem \ref{tWildDisc}. Finally, if $\text{Par}(\sigma_p) \in A[\ell] - \{0\}$, then the proof of Theorem \ref{tWildDisc} exhibits that $v_p(\text{Disc}(\text{Par}(\mathbf{v})))$ equals $\frac{|A|}{\ell}$ multiplied by the $p$-adic valuation of the discriminant of the unique degree $\ell$ subfield of $\Q_p(\zeta_{p^\infty})$. Therefore the conductor--discriminant formula demonstrates the validity of
\[
v_p(\text{Disc}(\text{Par}(\mathbf{v}))) =
\begin{cases}
|A| \cdot \left(1 - \frac{1}{\ell}\right) & \text{if } \ell \neq p \\
2|A| \cdot \left(1 - \frac{1}{\ell}\right) & \text{if } \ell = p.
\end{cases}
\]
Therefore there exists $C_4 > 0$, depending only on $c_a$ and $\mathbf{v}_Q = (v_a)_{a \in A - A[\ell]}$, such that
\[
\text{Disc}(\text{Par}(\mathbf{v})) = C_4 \cdot \left(\prod_{a \in A[\ell] - \{0\}} \Delta(|w_a|)\right)^{\frac{|A| \cdot (\ell - 1)}{\ell}}.
\]
We are now ready to apply Theorem \ref{tMultiReduction} to the innermost sum. We shall do so with every choice of $S \subseteq A[\ell] - \{0\}$ satisfying $T + S = A[\ell]$, which allows us to detect the condition (\ref{eSurjCondition}) using inclusion--exclusion. Here we make essential use of the fact that the leading constant $C_{\text{lead}}$ from Theorem \ref{tMultiReduction} is independent of $S$.

\subsection*{The exponent of the logarithm in the main term}
Let us compute $\alpha(A)$, the exponent of the logarithm appearing in Theorem \ref{thm:main}. When we apply Theorem \ref{tMultiReduction}, the exponent of the logarithm, denoted by $\alpha$ in the theorem statement, is equal to
\[
\sum_{a \in B - \{0\}} \frac{|\textup{Lift}(H_a)|}{\varphi(\textup{lcm}(M, \ell))} + \sum_{a \in A - B} \frac{|B|}{\ell^{n - 1}} \cdot \frac{|\textup{Lift}(H_a)|}{\varphi(\textup{lcm}(M, \ell))}
\]
with $n$ the dimension of $A[\ell]$ as a $\mathbb{F}_{\ell}$-vector space. Since $\ell$ is the smallest prime divisor of $A$, it follows that $\Q(\zeta_{\ell})$ and $K$ are disjoint extensions of $\Q$. Therefore, we have
\[
\frac{|\textup{Lift}(H_a)|}{\varphi(\textup{lcm}(M, \ell))} = \frac{|H_a|}{(\ell - 1) \varphi(M)}.
\]
By construction, we have that
\[
\frac{|H_a|}{\varphi(M)} = \frac{|\widetilde{\pi}(\text{Adm}(a))| \cdot |G|}{\varphi(M)} = \frac{|\widetilde{\pi}(\text{Adm}(a))|}{[K : \Q]} = \frac{|\widetilde{\pi}(\text{Adm}(a))|}{|A/A[\ell]|}.
\]
Finally, it follows from Theorem \ref{tLocalConditions} that
\[
\frac{|\widetilde{\pi}(\text{Adm}(a))|}{|A/A[\ell]|}
=
\begin{cases}
\frac{|\text{Adm}(a)|}{|A|} & \text{if } a \in B - \{0\} \\
\frac{|\text{Adm}(a)|}{|A|} \cdot \frac{\ell^{n - 1}}{|B|} & \text{if } a \in A - B.
\end{cases}
\]
This shows that $\alpha(A)$ is the correct exponent.

\subsection*{The leading constant}
To finish the proof, we now apply Theorem \ref{tMultiReduction} for each tuple $\mathbf{v}_Q = (v_a)_{a \in A - A[\ell]}$ such that the associated map $\varphi: G_\Q \rightarrow A/A[\ell]$ satisfies (P1), (P2) and (P3). Every such tuple $\mathbf{v}_Q$ together with a choice of $\mathbf{c} = (c_a)_{a \in A[\ell] - \{0\}}$ gives a leading constant that we denote by $C_{\text{lead}}(\mathbf{v}_Q, \mathbf{c})$ and also a constant $C_4(\mathbf{v}_Q, \mathbf{c}) > 0$ satisfying
\[
\text{Disc}(\text{Par}(\mathbf{v})) = C_4(\mathbf{v}_Q, \mathbf{c}) \cdot \left(\prod_{a \in A[\ell] - \{0\}} \Delta(|w_a|)\right)^{\frac{|A| \cdot (\ell - 1)}{\ell}}.
\]
Then we take
\[
C_{\text{weak}} = \sum_{\mathbf{v}_Q} \sum_\mathbf{c} \frac{C_{\text{lead}}(\mathbf{v}_Q, \mathbf{c})}{C_4(\mathbf{v}_Q, \mathbf{c})}.
\]
% Uniformity in the error term is really critical here. May depend only on l, n and C but nothing more
Because the exponent in the discriminant for all variables outside $A[\ell]$ is bigger than the exponent for the variables in $A[\ell]$, one sees that the sum
\[
\sum_{\mathbf{v}_Q} \sum_\mathbf{c} \frac{1}{C_4(\mathbf{v}_Q, \mathbf{c})}
\]
converges and also may be truncated (to those $\mathbf{v}_Q$ satisfying that $M \leq (\log X)^{C_5}$) with an acceptable error term. Since the leading constant $C_{\text{lead}}(\mathbf{v}_Q, \mathbf{c})$ is uniformly bounded, we conclude that the sum defining $C_{\text{weak}}$ may also be truncated. Hence, the contribution from the error term in Theorem \ref{tMultiReduction} is negligible.

In order to show that $C_{\text{weak}} > 0$, it suffices to show that $C_{\text{lead}}(\mathbf{v}_Q, \mathbf{c}) > 0$ for some choice of $\mathbf{v}_Q$ and $\mathbf{c}$. We choose a splitting of $A[\ell]$ as $A[\ell] = B \oplus B_{\text{comp}}$ and a splitting $A = \widetilde{B} \oplus B_{\text{comp}}$ with $B \subseteq \widetilde{B}$. By \cite[Proposition 5.5]{FLN}, we may find a surjective homomorphism $\varphi': G_\Q \rightarrow \widetilde{B}$ such that all decomposition groups are cyclic. In particular, $\varphi'$ satisfies weak approximation. We extend $\varphi'$ to a homomorphism $\varphi'': G_\Q \rightarrow A$ by sending $g$ to $(\varphi'(g), 0)$, where we have implicitly used our splitting $A = \widetilde{B} \oplus B_{\text{comp}}$.

Now we apply Theorem \ref{tMultiReduction} with $M$ equal to the absolute discriminant of $\varphi''$, $B = A[\ell] \cap \ell A$ as above, $f(p, S) = B$ for $S$ intersecting $B$ trivially, $g(p) = \{0\}$ and $c_a = 1$. Now twisting $\varphi''$ with such multicyclic extensions gives a new $A$-extension satisfying weak approximation. This forces $C_{\text{lead}}(\mathbf{v}_Q, \mathbf{c}) > 0$ and therefore $C_{\text{weak}} > 0$.
\end{proof}

\section{The character sum}
\label{sec:charsum}
The following three sections are devoted to the proof of Theorem \ref{tMultiReduction}. Let $A = \mathbb{F}_\ell^n$ and let $\mathbf{v} = (v_a)_{a \in \mathbb{F}_\ell^n - \{0\}} \in \mathcal{A}$. Write $\pi_i$ for the projection map $\pi_i: \mathbb{F}_\ell^n \rightarrow \mathbb{F}_{\ell}$ on the $i$-th coordinate. We begin by expressing the indicator function for a tuple $\mathbf{v}$ being $f$-correct as an explicit character sum. Define
\[
\psi_i = \sum_{\substack{a \in \mathbb{F}_\ell^n \\ \pi_i(a) \neq 0}} \pi_i(a) \cdot \psi_{v_a, \ell, 1}
\]
for $i \in [n] := \{1, \dots, n\}$. We write $\psi: G_\Q \rightarrow A$ for the homomorphism given by
\[
\psi(\sigma) = (\psi_1(\sigma), \dots, \psi_n(\sigma)),
\]
so $\psi = \text{Par}(\mathbf{v})$ by construction. Our aim is to express the sum
\begin{align}
\label{eCharSumCorrect}
\sum_{\substack{\mathbf{v} = (v_a)_{a \in A - \{0\}} \in \mathcal{A} \\ \prod_{a \in \mathbb{F}_\ell^n - \{0\}} \Delta(|v_a|) \leq X \\ v_a \neq 1 \ \forall a \in S \\ v_a \equiv c_a \bmod d(\ell) \\ \gcd(v_a, M) = 1 \\ p \mid v_a \Rightarrow p \bmod M \in H_a}} \mathbf{1}_{\textup{Par}(\mathbf{v}) \ f\textup{-correct}}
\end{align}
as a sum of Dirichlet characters. First we will see how to rewrite the indicator function. We have a perfect bilinear pairing
\[
A \times A^\vee \rightarrow \mathbb{C}^\ast, \quad (a, \chi) \mapsto \chi(a).
\]
Now take some $a \in A$. Then we get an induced perfect bilinear pairing
\[
\frac{A}{\langle a \rangle} \times \{\chi \in A^\vee : \chi(a) = 1\} \rightarrow \mathbb{C}^\ast.
\]
By definition of $f$-correct, we need to check 
\begin{enumerate}
\item[(Q1)] for all $a \not \in B$ and all $p \mid v_a$ coprime to $2M$ that
\begin{align}
\label{efCorrect}
\psi(\text{Frob}_p) - \epsilon_{a, p \bmod M, f} \in B + \langle a \rangle,
\end{align}
where $\epsilon_{a, p \bmod M, f}$ is an element of $A$ depending only on $a$, $p \bmod M$ and $f$;
\item[(Q2)] let $p \mid M$. Suppose that $p \neq 2$ or $\ell \neq 2$. Then we have that $p$ is unramified and furthermore
\begin{align}
\label{egCorrect}
\psi(\text{Frob}_p) \in g(p).
\end{align}
\end{enumerate}
Write $e_i$ for the $i$-th standard basis vector. Also denote by $\chi_i: \mathbb{F}_\ell^n \rightarrow \langle \zeta_{\ell} \rangle$ the element in $A^\vee$ satisfying
\[
\chi_i(e_j) = \zeta_{\ell}^{\delta(i, j)}.
\]
Write $B_a = B + \langle a \rangle$. Then we have
\[
\{\chi \in A^\vee : \chi(B_a) = 1\} = \left\{\prod_{j \in [n]} \chi_j^{\pi_j(\mathbf{x})} : \mathbf{x} \in \mathbb{F}_\ell^n, \langle \gamma, \mathbf{x} \rangle = 0 \text{ for all } \gamma \in B_a\right\}.
\]
Using orthogonality of characters on the abelian group $\mathbb{F}_\ell^n$, we are now in the position to detect condition (Q1), i.e. equation (\ref{efCorrect}), by
\[
\frac{1}{\ell^m} \sum_{\substack{\mathbf{x} \in \mathbb{F}_\ell^n \\ \langle \gamma, \mathbf{x} \rangle = 0 \ \forall \gamma \in B_a}} \prod_{j \in [n]} \chi_j(\psi(\text{Frob}_p))^{\pi_j(\mathbf{x})} \prod_{j \in [n]} \overline{\chi_j(\epsilon_{a, p \bmod M, f})}^{\pi_j(\mathbf{x})},
\]
where $m$ equals $n - 1 - \dim_{\FF_{\ell}} B$. We observe that $\chi_j \circ \psi$ is simply $\psi_j$ after identifying $\mathbb{F}_{\ell}$ with $\langle \zeta_{\ell} \rangle$ by sending $1$ to $\zeta_\ell$. Therefore we can rewrite the above as
\[
\frac{1}{\ell^m} \sum_{\substack{\mathbf{x} \in \mathbb{F}_\ell^n \\ \langle \gamma, \mathbf{x} \rangle = 0 \ \forall \gamma \in B_a}} \prod_{j \in [n]} \psi_j(\text{Frob}_p)^{\pi_j(\mathbf{x})} \delta_{a, p \bmod M, j, f}^{\pi_j(\mathbf{x})},
\]
where we have renamed $\overline{\chi_j(\epsilon_{a, p \bmod M, f})}$ as $\delta_{a, p \bmod M, j, f}$. We may similarly detect condition (Q2), i.e. equation (\ref{egCorrect}), by the simultaneous conditions $v_a \not \equiv 0 \bmod p$ for all $a$ and
\begin{align}
\label{eC2detect}
\frac{1}{\ell^n} \sum_{\alpha \in g(p)} \sum_{\mathbf{x} \in \mathbb{F}_\ell^n} \prod_{j \in [n]} \psi_j(\text{Frob}_p)^{\pi_j(\mathbf{x})} \overline{\chi_j(\alpha)}^{\pi_j(\mathbf{x})}.
\end{align}
We will now return the character sum for only the condition (Q1). It will be straightforward to then insert the condition (Q2) later. With this in hand, we are now able to rewrite the indicator function for the condition (Q1) as
\[
\mathbf{1}_{f\text{ satisfies (Q1)}} = 
\ell^{-m\sum_{a \in \mathbb{F}_\ell^n - B} \tilde{\omega}(v_a)} \prod_{a \in \mathbb{F}_\ell^n - B} \prod_{\substack{p \mid v_a \\ p \equiv 1 \bmod 2}} \sum_{\substack{\mathbf{x} \in \mathbb{F}_\ell^n \\ \langle \gamma, \mathbf{x} \rangle = 0 \ \forall \gamma \in B_a}} \prod_{j \in [n]} \psi_j^{\pi_j(\mathbf{x})}(\text{Frob}_p) \delta_{a, p \bmod M, j, f}^{\pi_j(\mathbf{x})},
\]
where $\tilde{\omega}$ denotes the number of odd prime divisors. The product equals
\[
\prod_{a \in \mathbb{F}_\ell^n - B} \sum_{\substack{(\mathbf{x}_p)_{p \mid v_a \text{ odd}} \\ \mathbf{x}_p \in \mathbb{F}_\ell^n \\ \langle \gamma, \mathbf{x}_p \rangle = 0 \ \forall \gamma \in B_a}} \prod_{\substack{p \mid v_a \\ p \equiv 1 \bmod 2}} \prod_{j \in [n]} \psi_j^{\pi_j(\mathbf{x}_p)}(\text{Frob}_p) \delta_{a, p \bmod M, j, f}^{\pi_j(\mathbf{x}_p)},
\]
which is in turn
\[
\sum_{\substack{(\mathbf{x}_{p, a})_{p \mid v_a \text{ odd}, a \in \mathbb{F}_\ell^n - B} \\ \mathbf{x}_{p, a} \in \mathbb{F}_\ell^n \\ \langle \gamma, \mathbf{x}_{p, a} \rangle = 0 \ \forall \gamma \in B_a}} \prod_{a \in \mathbb{F}_\ell^n - B} \prod_{\substack{p \mid v_a \\ p \equiv 1 \bmod 2}} \prod_{j \in [n]} \psi_j^{\pi_j(\mathbf{x}_{p, a})}(\text{Frob}_p) \delta_{a, p \bmod M, j, f}^{\pi_j(\mathbf{x}_{p, a})}.
\]
For every $a \in \mathbb{F}_\ell^n - B$ and every $b \in \mathbb{F}_\ell^n$ with $\langle \gamma, b \rangle = 0$ for all $\gamma \in B_a$, we introduce a new variable
\[
w_{a, b} = \prod_{\substack{p \mid v_a \\ p \equiv 1 \bmod 2 \\ \mathbf{x}_{p, a} = b}} p, \quad w_{a, \bullet} = \text{sgn}(v_a) \cdot 2^{v_2(v_a)}.
\]
We can recover $v_a$ and the vectors $\mathbf{x}_{p, a}$ from the $w_{a, b}$. Indeed, we have
\[
v_a = w_{a, \bullet} \prod_{\substack{b \in \mathbb{F}_\ell^n \\ \langle \gamma, b \rangle = 0 \ \forall \gamma \in B_a}} w_{a, b}.
\]
Now to find $\mathbf{x}_{p, a}$, note that $p \mid v_a$ by the definition of $\mathbf{x}_{p, a}$. Therefore we may take the unique $p$ such that $p \mid w_{a, b}$. Then we reconstruct $\mathbf{x}_{p, a}$ by taking this $b$. This transforms the sum into
\[
\sum_{(v_a)_{a \in B - \{0\}}} \sump_{\substack{(w_{a, b})_{a, b} \\ \prod_a \Delta(|v_a|) \prod_{a, b} \Delta(|w_{a, b}|) \leq X}} \hspace{-0.7cm} \ell^{-m \sum_{a, b} \tilde{\omega}(w_{a, b})} \prod_{a \in \mathbb{F}_\ell^n - B} \prod_{\substack{p \mid v_a \\ p \equiv 1 \bmod 2}} \prod_{j \in [n]} \psi_j^{\pi_j(b)}(\text{Frob}_p) \delta_{a, p \bmod M, j, f}^{\pi_j(b)},
\]
where $\sump$ also includes the following additional summation conditions
\[
v_a \equiv c_a \bmod d(\ell), \quad \gcd(v_a, M) = 1, \quad p \mid w_{a, b} \Rightarrow p \bmod M \in H_a
\]
and
\[
v_a \neq 1 \ \forall a \in S, \quad p \mid v_a \Rightarrow p \bmod M \in H_a, \quad \mathbf{v} \in \mathcal{A}.
\]
We now expand $\psi_j$ to deduce that the above sum equals
\[
\sum_{(v_a)_{a \in B - \{0\}}} \sump_{\substack{(w_{a, b})_{a, b} \\ \prod_a \Delta(|v_a|) \prod_{a, b} \Delta(|w_{a, b}|) \leq X}} \hspace{-0.7cm} \ell^{-m \sum_{a, b} \tilde{\omega}(w_{a, b})} \prod_{a_1, b_1} \epsilon' \prod_{p \mid w_{a_1, b_1}} \prod_{j \in [n]} \prod_{\substack{a_2 \in \mathbb{F}_\ell^n \\ \pi_j(a_2) \neq 0}} \psi_{v_{a_2}, \ell, 1}^{\pi_j(b_1) \pi_j(a_2)}(\text{Frob}_p),
\]
where 
\[
\epsilon' = \prod_{j \in [n]} \prod_{p \mid w_{a_1, b_1}} \delta_{a_1, p \bmod M, j, f}^{\pi_j(b_1)}.
\]
By construction we have $\langle b_1, a_2 \rangle = 0$ for $a_2 \in B$. Therefore we may further expand $\psi_j$ to rewrite the above sum as
\[
\sum_{(v_a)_{a \in B - \{0\}}} \sump_{\substack{(w_{a, b})_{a, b} \\ \prod_a \Delta(|v_a|) \prod_{a, b} \Delta(|w_{a, b}|) \leq X}} \hspace{-0.75cm} \ell^{-m \sum_{a, b} \tilde{\omega}(w_{a, b})} \prod_{a_1, b_1} \epsilon' \prod_{p \mid w_{a_1, b_1}} \prod_{j \in [n]} \prod_{\substack{a_2 \in \mathbb{F}_\ell^n \\ \pi_j(a_2) \neq 0}} \prod_{b_2} \psi_{w_{a_2, b_2}, \ell, 1}^{\pi_j(b_1) \pi_j(a_2)}(\text{Frob}_p),
\]
where the product over $b_2$ includes $\bullet$, while the product over $b_1$ does not. Using the definition of $\langle \cdot, \cdot \rangle$, we may finally rewrite this as
\[
\sum_{(v_a)_{a \in B - \{0\}}} \sump_{\substack{(w_{a, b})_{a, b} \\ \prod_a \Delta(|v_a|) \prod_{a, b} \Delta(|w_{a, b}|) \leq X}} \ell^{-m \sum_{a, b} \tilde{\omega}(w_{a, b})} \prod_{a_1, b_1} \epsilon' \prod_{a_2, b_2} \psi_{w_{a_2, b_2}, \ell, 1}^{\langle b_1, a_2 \rangle}(\text{Frob}_{w_{a_1, b_1}}).
\]
Inserting the conditions from (Q2), see equation (\ref{eC2detect}), and writing out the implicit summation conditions in $\sum'$, we conclude that
\[
\sum_{\substack{\mathbf{v} = (v_a)_{a \in A - \{0\}} \in \mathcal{A} \\ \prod_{a \in \mathbb{F}_\ell^n - \{0\}} \Delta(|v_a|) \leq X \\ v_a \neq 1 \ \forall a \in S \\ v_a \equiv c_a \bmod d(\ell) \\ \gcd(v_a, M) = 1 \\ p \mid v_a \Rightarrow p \bmod M \in H_a}} \mathbf{1}_{\textup{Par}(\mathbf{v}) \ f\textup{-correct}}
\]
equals
\begin{multline}
\label{eNX}
N(X) = \sum_{\substack{(v_a)_{a \in B - \{0\}} \\ p \mid v_a \Rightarrow p \bmod M \in H_a}} \sum_{\substack{(w_{a, b})_{a, b} \\ \prod_a \Delta(|v_a|) \prod_{a, b} \Delta(|w_{a, b}|) \leq X \\ v_a \equiv c_a \bmod d(\ell) \\ \gcd(v_a, M) = 1 \\ p \mid w_{a, b} \Rightarrow p \bmod M \in H_a}} \mathbf{1}_{v_a \neq 1 \ \forall a \in S} \times \mathbf{1}_{\mathbf{v} \in \mathcal{A}} \times \\
\prod_{p \mid M'} \left(\frac{1}{\ell^n} \sum_{\alpha \in g(p)} \sum_{\mathbf{x} \in \mathbb{F}_\ell^n} \prod_{j \in [n]} \psi_j(\text{Frob}_p)^{\pi_j(\mathbf{x})} \overline{\chi_j(\alpha)}^{\pi_j(\mathbf{x})}\right) \times \\
\ell^{-m \sum_{a, b} \tilde{\omega}(w_{a, b})} \prod_{a_1, b_1} \epsilon' \prod_{a_2, b_2} \psi_{w_{a_2, b_2}, \ell, 1}^{\langle b_1, a_2 \rangle}(\text{Frob}_{w_{a_1, b_1}}),
\end{multline}
where
\[
M' = 
\begin{cases}
\frac{M}{2^{v_2(M)}} & \text{if } \ell = 2 \\
M & \text{if } \ell > 2.
\end{cases}
\]
We will now see how to find the main term of the above sum.

\section{Combinatorial considerations and the main term}
\label{sComb}
When the characters appearing in the sum described above are non-trivial then they will oscillate, giving rise to cancellation in the sum. However, should the combination of characters cancel then the contribution to the sum will be much larger. The purpose of this section is to develop combinatorial conditions on the indices $a,b$ such that the resulting combination of characters yields a principal character and hence a dominant contribution to the sum. The reader should compare this process to that carried out in \cite{biquad}, or in \cite{FK}, in order to identify the main term of their sums of combinations of Legendre symbols. 

\subsection{Multiquadratic case}
Define
\[
\mathcal{I} := \{(S, T) : S, T \in \mathbb{F}_2^n, S \not \in B, \langle \gamma, T \rangle = 0 \ \forall \gamma \in B_S\}.
\]
For a subspace $V$ of $\mathbb{F}_2^n$ we write
\[
V^\top = \{S \in \mathbb{F}_2^n : \langle S, v \rangle = 0 \ \forall v \in V\}
\]
for the complement under the pairing $\langle \cdot, \cdot \rangle$. We have the following crucial lemma.

\begin{lemma}
\label{lBlock}
Let $X \subseteq \mathcal{I}$ with $|X| \geq 2^n - |B|$. Suppose that
\begin{align}
\label{eConstantParity}
\langle S_1, T_2 \rangle + \langle S_2, T_1 \rangle = 0
\end{align}
for all $(S_1, T_1), (S_2, T_2) \in X$. Then we have
\[
X = \{(S, f(S)) : S \in \mathbb{F}_2^n - B\}
\]
for some function $f: \mathbb{F}_2^n - B \rightarrow B^\top$ which is alternating with respect to the bilinear pairing $\langle \cdot, \cdot \rangle$, in the sense that
\[
\langle S, f(T) \rangle = \langle T, f(S) \rangle \textup{ and } \langle S, f(S) \rangle = 0.
\]
\end{lemma}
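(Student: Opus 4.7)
The plan is to reduce the lemma to a dimension count by letting $W \subseteq \mathbb{F}_2^n$ denote the span of the set of first coordinates appearing in $X$, and showing that $W = \mathbb{F}_2^n$. The graph structure then falls out of an equality case in the resulting size bound.

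The key structural input from the alternating hypothesis is that whenever $(S, T_1), (S, T_2) \in X$ share a first coordinate, for any $(S', T') \in X$ we have
\[
\langle S', T_1 - T_2 \rangle = \langle S', T_1 \rangle + \langle S', T_2 \rangle = \langle S, T' \rangle + \langle S, T' \rangle = 0.
\]
Hence $T_1 - T_2 \in W^\top$, and since both $T_i$ lie in $B_S^\top \subseteq B^\top$ by definition of $\mathcal{I}$, in fact $T_1 - T_2 \in W^\top \cap B^\top = (W + B)^\top$. So each nonempty fiber $X_S := \{T : (S, T) \in X\}$ is contained in a single coset of $(W+B)^\top$.

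Setting $d = \dim W$, $b = \dim B$, $e = \dim(W \cap B)$, I would combine this fiber bound with the inclusion $\{S : (S, T) \in X\} \subseteq W \setminus (W \cap B)$ to obtain
\[
|X| \leq (2^d - 2^e) \cdot 2^{n - d - b + e}.
\]
When $d = n$, one necessarily has $e = b$, and the bound becomes exactly $2^n - 2^b = 2^n - |B|$. Otherwise, a short case analysis (treating $B \subseteq W$ and $B \not\subseteq W$ separately, using $\dim(W + B) \leq n$ to control the parameters in the second case) shows the bound is strictly smaller than $2^n - |B|$. Combined with the hypothesis $|X| \geq 2^n - |B|$, this forces $d = n$, and then equality throughout the estimate forces both $\{S : (S, T) \in X\} = \mathbb{F}_2^n - B$ and $|X_S| = 1$ for each such $S$.

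This yields a well-defined function $f: \mathbb{F}_2^n - B \rightarrow B^\top$ whose graph is $X$. The conditions $f(S) \in B^\top$ and $\langle S, f(S) \rangle = 0$ are immediate from $(S, f(S)) \in \mathcal{I}$, while the alternating hypothesis applied to pairs $(S, f(S)), (T, f(T)) \in X$ translates directly to $\langle S, f(T) \rangle = \langle T, f(S) \rangle$. The main obstacle is really just bookkeeping in the strict inequality step, particularly the edge case $b = n - 1$ where $2^n - |B| = 2^{n-1}$ matches the naive bound on $2^{n-b+e}$, and one must extract strictness from the subtracted term $2^{n-d-b+2e}$ using the constraint $d \leq n - b + e$.
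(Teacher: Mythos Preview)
Your argument is correct and takes a genuinely different route from the paper's. The paper proceeds in two stages: first it fixes a $\pi_2$-fiber of maximal size (via pigeonhole), uses the alternating hypothesis to show that differences of first coordinates in that fiber lie in $V_2^\top$, and from this deduces $B \subseteq V_1$; only then does it bound the $\pi_1$-fibers by $|V_1^\top|$ and run the counting $|X| \leq (|V_1| - |B|)\cdot |V_1^\top|$ to force $V_1 = \mathbb{F}_2^n$. You collapse these two stages into one by observing directly that $\pi_1$-fibers lie in cosets of $(W+B)^\top$ rather than merely $W^\top$, which builds the $B^\top$ constraint into the fiber bound and yields the sharper inequality $|X| \leq (2^d - 2^e)\,2^{n-d-b+e}$ without any preliminary work on $\pi_2$. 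Your approach is cleaner and shorter; the paper's is slightly more modular (it isolates the statement $B \subseteq V_1$ as an intermediate conclusion), but this modularity is not used elsewhere. The case analysis you sketch for the strict inequality is sound, including the delicate $b = n-1$ case where one needs the subtracted term $2^{n-d-b+2e} \geq 1$ (guaranteed by $\dim(W+B) \leq n$) to break the tie $2^{n-b+e} = 2^{n-1} = 2^n - |B|$.
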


\begin{proof}
Denote by $\pi_1$ and $\pi_2$ the natural projection maps from $\mathcal{I}$ to $\mathbb{F}_2^n$. Write $V_1$ for the subspace generated by $\pi_1(X)$ and write $V_2$ for the subspace generated by $\pi_2(X)$. By construction of $\mathcal{I}$ we have that
\begin{align}
\label{eV2}
V_2 \subseteq B^\top.
\end{align}
By the pigeonhole principle there exists some $T_0 \in \pi_2(X)$ such that 
\begin{align}
\label{eT0fiber}
|\pi_2^{-1}(T_0) \cap X| \geq \frac{2^n - |B|}{|V_2|}.
\end{align}
List the elements of $\pi_2^{-1}(T_0) \cap X$ as
\[
(S_1, T_0), \dots, (S_\alpha, T_0).
\]
Suppose that there exists $T_1 \in \pi_2(X)$ such that
\[
\langle S_i, T_1 \rangle + \langle S_j, T_1 \rangle = 1
\]
for some $1 \leq i, j \leq \alpha$. We claim that this contradicts equation (\ref{eConstantParity}). Indeed, take such a $T_1$ and take such $i, j$. Let $U$ be such that $(U, T_1) \in X$. Then either $(U, T_1)$ and $(S_i, T_0)$ contradict equation (\ref{eConstantParity}) or $(U, T_1)$ and $(S_j, T_0)$ do.

Therefore we may assume from now on that that for all $T_1 \in \pi_2(X)$ and all $1 \leq i, j \leq \alpha$
\[
\langle S_i, T_1 \rangle + \langle S_j, T_1 \rangle = 0.
\]
We conclude that $S_i - S_j \in V_2^\top$. We now claim that $V_1$ contains $V_2^\top$, so it contains in particular $B$ by equation (\ref{eV2}). Since we have $S_i - S_j \in V_2^\top$, we now consider the elements
\[
\{S_j - S_1 : 2 \leq j \leq \alpha\}.
\]
This gives $\alpha - 1 = |\pi_2^{-1}(T_0) \cap X| - 1$ non-zero elements of $V_2^\top \cap V_1$. Therefore we get from equation (\ref{eT0fiber}) that
\[
|V_2^\top \cap V_1| \geq \alpha \geq \frac{2^n - |B|}{|V_2|} = |V_2^\top| - \frac{|B|}{|V_2|}
\]
after also taking into account the zero element of $V_2^\top \cap V_1$. If $B$ has codimension $0$ or $1$, then the lemma is trivial. Otherwise we have
\[
|V_2^\top| - \frac{|B|}{|V_2|} \geq |V_2^\top| - \frac{|V_2^\top|}{4}.
\]
We conclude that
\[
|V_2^\top \cap V_1| \geq \frac{3|V_2^\top|}{4},
\]
which readily implies the claim. Thanks to the claim we see that $V_1$ contains $B$.

We next claim that $V_1$ equals $\mathbb{F}_2^n$. We now fix some $S_0 \in \pi_1(X)$. Arguing as before, we see that
\[
\langle S_1, T_i \rangle + \langle S_1, T_j \rangle = 0
\]
for all $S_1 \in \pi_1(X)$ and all $i$ and $j$ such that $(S_0, T_i), (S_0, T_j) \in X$. In particular, we deduce that $|\pi_1^{-1}(S_0) \cap X| \leq |V_1^\top|$. We now sum to obtain
\[
2^n - |B| \leq |X| = \sum_{S_0 \in \pi_1(X)} |\pi_1^{-1}(S_0) \cap X| \leq \sum_{S_0 \in \pi_1(X)} |V_1^\top| \leq (|V_1| - |B|) \cdot |V_1^\top| = 2^n - |B| \cdot |V_1^\top|,
\]
because $V_1$ contains $B$. But this is only possible if $|V_1^\top| = 1$ or equivalently $V_1 = \mathbb{F}_2^n$. Therefore there exists a function $f: \mathbb{F}_2^n - B \rightarrow B^\top$ such that
\[
X = \{(S, f(S)) : S \in \mathbb{F}_2^n - B\}.
\]
It is readily verified that $f$ must be alternating, completing the proof.
\end{proof}

\subsection{Multicyclic case}
Define
\[
\mathcal{I} := \{(a, b) : a, b \in \mathbb{F}_\ell^n, a \not \in B, \langle \gamma, b \rangle = 0 \ \forall \gamma \in B_a\},
\]
where we recall that $\langle \cdot, \cdot \rangle$ is the standard bilinear form. We have the following crucial lemma.

\begin{lemma}
\label{lBlock2}
Let $X \subseteq \mathcal{I}$ with $|X| \geq \ell^n - |B|$. Suppose that
\begin{align}
\label{eConstantPairingOdd}
\langle a_2, b_1 \rangle = 0
\end{align}
for all $(a_1, b_1), (a_2, b_2) \in X$. Then we have
\[
X = \{(a, 0) : a \in \mathbb{F}_\ell^n - B\}.
\]
\end{lemma}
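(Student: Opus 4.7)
My plan is to exploit the one-sided pairing condition to constrain the second coordinates of elements of $X$ to a small subspace, and then close the argument with a dimension count.

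First, I would let $V_1$ denote the subspace of $\mathbb{F}_\ell^n$ spanned by $\pi_1(X)$, where $\pi_1$ is the projection onto the first coordinate, and set $v = |V_1|$, $c = |V_1 \cap B|$, $b = |B|$. Because $\pi_1(X) \cap B = \emptyset$, we have $V_1 \not\subseteq B$ and hence $c \leq v/\ell$. The hypothesis $\langle a_2, b_1 \rangle = 0$, applied as $(a_2, b_2)$ varies over all of $X$, shows that each second coordinate $b_1$ of an element of $X$ lies in $V_1^\top$. Combined with the constraint $b \in B^\top$ built into $\mathcal{I}$, this places every such $b$ in $W^\top$, where $W := V_1 + B$.

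Next, I would bound $|X|$ by counting over the first coordinate: for each $a \in \pi_1(X)$, the admissible $b$'s form a subset of $W^\top$, so $|X| \leq |\pi_1(X)| \cdot |W^\top| \leq (v-c) \cdot \ell^n / |W|$. Using $|W| = vb/c$, this becomes
\[
|X| \leq \frac{(v-c)c\,\ell^n}{vb}.
\]
The lower bound $|X| \geq \ell^n - b$ from the hypothesis then yields the key inequality
\[
(v-c)c\,\ell^n \geq vb(\ell^n - b).
\]

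The main step, which I expect to be the principal obstacle, is to deduce $V_1 = \mathbb{F}_\ell^n$ from this, exploiting that $v, c, b$ are powers of $\ell$ with $c \leq v/\ell$ and $vb \leq c\ell^n$ (the latter because $|W| \leq \ell^n$). I would split into two cases. If $c = b$ (i.e.\ $B \subseteq V_1$), the key inequality collapses to $v \geq \ell^n$, forcing $V_1 = \mathbb{F}_\ell^n$. If instead $c \leq b/\ell$, then $vb \leq c\ell^n$ forces $v \leq \ell^{n-1}$, while $(v-c)c/v \leq c \leq b/\ell$ cannot exceed $b(1 - b/\ell^n)$ unless $b$ is essentially as large as $\ell^n$; the discreteness of $\ell$-powers then reduces this to the degenerate situation $B = \mathbb{F}_\ell^n$, in which $X$ is forced to be empty and the conclusion is trivial.

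Finally, with $V_1 = \mathbb{F}_\ell^n$ established, we have $W = \mathbb{F}_\ell^n$ and $W^\top = \{0\}$, so every second coordinate appearing in $X$ vanishes. Then $\pi_1(X) \subseteq \mathbb{F}_\ell^n - B$ has cardinality $|X| \geq \ell^n - b = |\mathbb{F}_\ell^n - B|$, so $\pi_1(X) = \mathbb{F}_\ell^n - B$ and consequently $X = \{(a, 0) : a \in \mathbb{F}_\ell^n - B\}$, as claimed.
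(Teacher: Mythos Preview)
Your argument is correct and reaches the same conclusion, but by a route genuinely different from the paper's. The paper works on the \emph{second} coordinate: letting $N$ be the size of the span of $\pi_2(X)$, it uses pigeonhole to find some $b_0 \in \pi_2(X)$ whose fibre in $X$ has at least $(\ell^n - |B|)/N$ elements, observes that every $a$ in this fibre lies in the orthogonal of the span of $\pi_2(X)$ (a subspace of size $\ell^n/N$ containing $B$), and derives the contradiction $\ell^n/N - |B| \geq (\ell^n - |B|)/N$ for $N > 1$. Your argument is in a sense dual: you work with the span $V_1$ of the \emph{first} coordinates, observe that every second coordinate lies in $(V_1 + B)^\top$, and bound $|X|$ globally rather than fibrewise. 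This avoids the pigeonhole step at the cost of a short case split on whether $B \subseteq V_1$.

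Two small points worth tightening. First, your opening claim $V_1 \not\subseteq B$ assumes $X \neq \emptyset$; you should note separately that if $X = \emptyset$ then $\ell^n - |B| \leq 0$ forces $B = \mathbb{F}_\ell^n$ and the statement is vacuous. Second, in Case~2 you write $(v-c)c/v \leq c$, but in fact $(v-c)c/v = c - c^2/v < c$ strictly since $c \geq 1$; this strictness is what gives $b(1 - b/\ell^n) < b/\ell$, hence $b > \ell^{n-1}(\ell - 1)$, and then the fact that $b$ is a power of $\ell$ forces $b = \ell^n$. With these clarifications the argument is complete and arguably more self-contained than the paper's.
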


\begin{proof}
The proof will be similar to the proof of Lemma \ref{lBlock}. We write $\pi_1$ and $\pi_2$ for the natural projection maps from $\mathcal{I}$ to $\mathbb{F}_\ell^n$. We denote by $N$ the number of elements in the subspace generated by $\pi_2(X)$. If $N = 1$, then we have
\[
X = \{(a, 0) : a \in \mathbb{F}_\ell^n - B\}.
\]
From now on we may and will assume that $N > 1$, and seek a contradiction.

Take some $b_0 \in \pi_2(X)$ and suppose that $|\pi_2^{-1}(b_0) \cap X| \geq (\ell^n - |B|)/N$. The existence of such a $b_0$ is guaranteed by the pigeonhole principle and our assumption $|X| \geq \ell^n - |B|$. We enumerate the elements of $\pi_2^{-1}(b_0) \cap X$ as
\[
(a_1, b_0), \dots, (a_k, b_0)
\]
with $k \geq (\ell^n - |B|)/N$. Then we have, thanks to equation (\ref{eConstantPairingOdd}), the equality
\[
\langle a_i, b \rangle = 0
\]
for all $b \in \pi_2(X)$ and all $1 \leq i \leq k$. Since $\langle \cdot, \cdot \rangle$ is non-degenerate and $N$ is the cardinality of the subspace generated by $\pi_2(X)$, it follows that there exists a subspace $V$ of dimension $n - \log_\ell N$ containing $B$ such that $a_i \in V$ for all $1 \leq i \leq k$. Furthermore, we know that the $a_i$ are not in $B$. This gives the inclusion
\[
\{a_i : 1 \leq i \leq k\} \subseteq V \setminus B
\]
and therefore the bound $k \leq \frac{\ell^n}{N} - |B|$. Therefore we conclude that
\[
\frac{\ell^n}{N} - |B| \geq k = |\pi_2^{-1}(b_0) \cap X| \geq \frac{\ell^n - |B|}{N},
\]
which is a contradiction for $N > 1$.
\end{proof}

\section{Oscillation of characters}\label{sec:oscillation}
We now return to equation (\ref{eNX}). We say that an integer $x$ is large if
\[
|x| > \exp\left((\log X)^{A_1}\right),
\]
where $A_1 > 0$ is a small constant that we will choose later. 

\subsection{Large variables}
\label{ssLarge}
We split the character sum $N(X)$ in two subsums
\[
N(X) = N_{\text{small}}(X) + N_{\text{large}}(X),
\]
where $N_{\text{small}}(X)$ is by definition the contribution to $N(X)$, where at most $\ell^n - |B| - 1$ of the variables $w_{a, b}$ are large, and $N_{\text{large}}(X)$ is by definition the remaining contribution. We will make use of the following well-known lemma.

\begin{lemma}
\label{lBound}
Let $\kappa, C > 0$ be fixed real numbers. Then we have the bounds
\[
\sum_{\substack{1 \leq n \leq x \\ p \mid n \Rightarrow p \bmod M \in H}} \mu^2(n) \kappa^{\omega(n)} \ll_{\kappa, C} x (\log x)^{\frac{|H| \cdot \kappa}{\varphi(M)} - 1}
\]
and
\[
\sum_{\substack{1 \leq n \leq x \\ p \mid n \Rightarrow p \bmod M \in H}} \frac{\mu^2(n) \kappa^{\omega(n)}}{n} \ll_{\kappa, C} (\log x)^{\frac{|H| \cdot \kappa}{\varphi(M)}}
\]
for all $M \leq (\log x)^C$ and all subsets $H$ of $(\Z/M\Z)^\ast$.
\end{lemma}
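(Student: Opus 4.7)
The plan is to recognise the summand $g(n) := \mu^2(n)\kappa^{\omega(n)} \mathbf{1}_{p \mid n \Rightarrow p \bmod M \in H}$ as a non-negative multiplicative function supported on squarefree integers, with $g(p) = \kappa$ when $p \bmod M \in H$ and $g(p) = 0$ otherwise, and dominated pointwise by $\tau_{\lceil \kappa \rceil}(n)$. I will set $\alpha := \kappa |H|/\varphi(M)$, which is precisely the target exponent in both bounds, and reduce everything to one Mertens-type input in arithmetic progressions.

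The key analytic input is a uniform prime sum estimate. In the range $M \leq (\log x)^C$, summing the standard Siegel--Walfisz bound over the residues in $H$ gives, for any $A > 0$,
$$ \sum_{\substack{p \leq x\\ p \bmod M \in H}} \log p = \frac{|H|}{\varphi(M)} x + O_{A,C}\!\left(\frac{x}{(\log x)^A}\right), $$
with ineffective implied constant depending only on $A$ and $C$. Partial summation upgrades this to the Mertens-type estimate
$$ \sum_{\substack{p \leq x\\ p \bmod M \in H}} \frac{1}{p} = \frac{|H|}{\varphi(M)} \log\log x + O_C(1), $$
valid in the same uniform range of $M$.

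The second bound then falls out of Rankin's trick: dropping the constraint $n \leq x$,
$$ \sum_{\substack{n \leq x\\ p \mid n \Rightarrow p \bmod M \in H}} \frac{\mu^2(n)\kappa^{\omega(n)}}{n} \leq \prod_{\substack{p \leq x\\ p \bmod M \in H}} \!\left(1 + \frac{\kappa}{p}\right) \leq \exp\!\left(\kappa \sum_{\substack{p \leq x\\ p \bmod M \in H}} \frac{1}{p}\right) \ll_{\kappa,C} (\log x)^\alpha. $$
For the first bound I would invoke Shiu's theorem (or alternatively Theorem \ref{thm:GK} with $J=1$), which for non-negative multiplicative $g$ with $g(n) \ll \tau_k(n)$ yields $\sum_{n \leq x} g(n) \ll_k \frac{x}{\log x} \exp\bigl(\sum_{p \leq x} g(p)/p\bigr)$. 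Substituting the Mertens estimate above gives exactly $x(\log x)^{\alpha-1}$, with implied constant depending only on $\kappa$ and $C$.

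The only delicate point is uniformity in $M$: the implied constants must depend on $\kappa$ and $C$ alone, not on the particular modulus. But $M \leq (\log x)^C$ is precisely the Siegel--Walfisz range, and although the constant there is ineffective it can be absorbed harmlessly into the $C$-dependence. Everything downstream of Siegel--Walfisz (Mertens in progressions, Rankin, Shiu) is elementary and inherits this uniformity without issue, so the hard part is really just locating the correct off-the-shelf prime-counting input in the right uniform range.
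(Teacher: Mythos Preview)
Your proposal is correct and follows essentially the same strategy as the paper: both identify the same multiplicative function, verify the prime-sum hypothesis via Siegel--Walfisz in the range $M \leq (\log x)^C$, and then invoke a general tool for partial sums of multiplicative functions (the paper uses Theorem~\ref{thm:GK} with $Q = \exp((\log x)^{\min(1/4\kappa,\,1/2)})$, $J=1$, which you list as your alternative to Shiu). The only minor difference is that the paper deduces the second bound from the first by partial summation, while you obtain it independently via the Euler-product (Rankin) majorisation; both routes are standard and of comparable length.
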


\begin{proof}
The first bound follows from Theorem \ref{thm:GK} with 
$$
f(n) := \mu^2(n) \kappa^{\omega(n)} \mathbf{1}_{p \mid n \Rightarrow p \bmod M \in H}, \quad \alpha := \frac{|H| \cdot \kappa}{\varphi(M)}, \quad k := \kappa, \quad Q := \exp\left((\log x)^{\min\left(\frac{1}{4\kappa}, \frac{1}{2}\right)}\right).
$$
and $J := 1$, $\epsilon := 1/10$. The assumption (\ref{eOnPrimes}) is guaranteed by the Siegel--Walfisz theorem. The second bound follows from partial summation and the first bound.
\end{proof}

After choosing the constant $A_1 > 0$ to be sufficiently small, it follows from Lemma \ref{lBound} that we have the bound
\[
N_{\text{small}}(X) = O\left(X (\log X)^{\alpha - 1 - \delta}\right)
\]
for some $\delta > 0$. It is precisely at this step that we make fundamental use of the assumptions $|H_a| = |H_{a'}|$ for all $a, a' \in A - B$ and $[(\Z/M\Z)^\ast : H_a] \leq C$.

\subsection{Linked variables}
\label{ssLinked}
We will now turn our attention to $N_{\text{large}}(X)$. We say that an integer $x$ is medium if
\[
|x| > (\log X)^{A_2},
\]
where $A_2 > 0$ is a large constant to be chosen later. We also split the sum $N_{\text{large}}(X)$ in two subsums, namely
\[
N_{\text{large}}(X) = N_{\text{linked}}(X) + N_{\text{main}}(X).
\]
Here $N_{\text{linked}}(X)$ is the contribution to $N_{\text{large}}(X)$ for which the following holds
\begin{itemize}
\item if $\ell > 2$, then there exists $(a_1, b_1), (a_2, b_2) \in \mathcal{I}$ such that all of the following conditions hold
\begin{enumerate}
\item[(1)] we have $\langle a_2, b_1 \rangle \neq 0$ or $\langle a_1, b_2 \rangle \neq 0$;
\item[(2)] we have that $w_{a_1, b_1}$ and $w_{a_2, b_2}$ are both medium or we have that $|w_{a_1, b_1}|, |w_{a_2, b_2}| > 1$ and one of the $w_{a_i, b_i}$ is large.
\end{enumerate}
\item if $\ell = 2$, then there exists $(a_1, b_1), (a_2, b_2) \in \mathcal{I}$ such that all of the following conditions hold
\begin{enumerate}
\item[(1)] we have $\langle a_2, b_1 \rangle + \langle a_1, b_2 \rangle = 1$;
\item[(2)] we have that $w_{a_1, b_1}$ and $w_{a_2, b_2}$ are both medium or we have that $|w_{a_1, b_1}|, |w_{a_2, b_2}| > 2$ and one of the $w_{a_i, b_i}$ is large.
\end{enumerate}
\end{itemize}
Furthermore, $N_{\text{main}}(X)$ is by definition the remaining contribution.

Our next goal is to bound $N_{\text{linked}}(X)$. Our two principal tools are the large sieve, as presented in Proposition \ref{pLargeSieve}, and the Siegel--Walfisz theorem over number fields as presented in the main theorem of \cite{Goldstein}.

\subsubsection{Equidistribution with the large sieve}
\label{ssLS}
We will now bound $N_{\text{linked}}(X)$. We will first suppose that there exist $(a_1, b_1), (a_2, b_2) \in \mathcal{I}$ with $w_{a_1, b_1}$ and $w_{a_2, b_2}$ both medium and satisfying the aforementioned conditions. So fix such a choice of $(a_1, b_1)$ and $(a_2, b_2)$. Define $\mathcal{I}' := \mathcal{I} - \{(a_1, b_1), (a_2, b_2)\}$. The corresponding contribution to $N_{\text{linked}}(X)$ is bounded by
\begin{multline}
\label{eNlX}
\sum_{(v_a)_{a \in B - \{0\}}} \sum_{(w_{a, \bullet})} \sum_{(w_{a, b})_{(a, b) \in \mathcal{I'}}} \ell^{-m \sum_{(a, b) \in \mathcal{I}'} \tilde{\omega}(w_{a, b})}\\
\left|\sum_{\substack{w_{a_1, b_1}, w_{a_2, b_2} \\ \Delta(|w_{a_1, b_1} w_{a_2, b_2}|) \leq \frac{X}{\prod_a \Delta(|v_a|) \prod_{(a, b) \in \mathcal{I}'} \Delta(|w_{a, b}|)}}} \hspace{-2cm} \alpha_{w_{a_1, b_1}} \beta_{w_{a_2, b_2}} \psi_{w_{a_2, b_2}, \ell, 1}^{\langle b_1, a_2 \rangle}(\text{Frob}_{w_{a_1, b_1}}) \psi_{w_{a_1, b_1}, \ell, 1}^{\langle b_2, a_1 \rangle}(\text{Frob}_{w_{a_2, b_2}})\right|,
\end{multline}
where $\alpha_{w_{a_1, b_1}}$ and $\beta_{w_{a_2, b_2}}$ are complex numbers of absolute value bounded by $1$ depending only on respectively $w_{a_1, b_1}$ and $w_{a_2, b_2}$ (and $w_{a, b}$ for $(a, b) \in \mathcal{I}'$). By changing the coefficients if necessary, we may assume from now on that $w_{a_1, b_1}$ and $w_{a_2, b_2}$ are coprime to $\ell$.

We now work towards our goal of applying Proposition \ref{pLargeSieve}. We take $K = \Q(\zeta_{\ell})$ and take the $M$ of Proposition \ref{pLargeSieve} to be a sufficiently large power of $\ell$. Critically, the field $K$ depends only on the abelian group $A$. Write $(\cdot/\cdot)_{\Q(\zeta_{\ell}), \ell}$ for the $\ell$-th power residue symbol in $\Q(\zeta_{\ell})$. We define
\begin{align}
\label{egamma}
\gamma(w, z) := \left(\frac{N_{\Q(\zeta_{\ell})/\Q}(w)}{z}\right)_{\Q(\zeta_{\ell}), \ell}^{\langle b_1, a_2 \rangle} \left(\frac{N_{\Q(\zeta_{\ell})/\Q}(z)}{w}\right)_{\Q(\zeta_{\ell}), \ell}^{\langle b_2, a_1 \rangle}.
\end{align}
Note that if we define $\widetilde{\gamma}(w, z) := \gamma(z, w)$, then $\widetilde{\gamma}(w, z)$ is still of the shape (\ref{egamma}). Therefore, if we check properties (P1) -- (P3) for all $\gamma(w, z)$, then these properties will also hold for $\widetilde{\gamma}(w, z)$. This allows us to circumvent the condition $X \leq Y$ in Proposition \ref{pLargeSieve} by applying Proposition \ref{pLargeSieve} to $\gamma(w, z)$ or $\widetilde{\gamma}(w, z)$ depending on whether $X$ or $Y$ is larger. Let us now verify (P1) -- (P3).

Property (P1) is clear. We take $A_{\text{bad}}$ to be the set of squarefull integers. In particular, property (P3) is immediate for $C_2 = 1/2$ if we take $C_1$ sufficiently large. The first part of property (P2) follows from reciprocity and the periodicity of power residue symbols provided that we take the $M$ from Proposition \ref{pLargeSieve} to be a sufficiently large power of $\ell$. It remains to prove the final part of property (P2).

To this end, fix some $w$. Then the application $z \mapsto \gamma(w, z)$ is a multiplicative character with period $M N_{\Q(\zeta_{\ell})/\Q}(w)$ by assumption. Therefore, by orthogonality of characters, it suffices to show that the character $z \mapsto \gamma(w, z)$ is not the principal character. By assumption, we have that $N_{\Q(\zeta_{\ell})/\Q}(w)$ is not squarefull. Therefore we may take a prime ideal $\mathfrak{p}$ of $\Q(\zeta_{\ell})$ of degree $1$ that divides $w$ such that none of the conjugates of $\mathfrak{p}$ divides $w$. By the Chinese Remainder Theorem, we may find an element $z \in \Z[\zeta_{\ell}]$ such that
\[
z \equiv 1 \bmod M, \quad z \equiv 1 \bmod \mathfrak{q} \text{ for all } \mathfrak{q} \mid N_{\Q(\zeta_{\ell})/\Q}(w) \text{ with } \mathfrak{q} \neq \mathfrak{p}, \quad z \equiv \alpha \bmod \mathfrak{p},
\]
where $\alpha$ is any generator of the cyclic group $(\Z[\zeta_{\ell}]/\mathfrak{p})^\ast \cong \FF_p^\ast$. Using our assumptions on $\langle b_1, a_2 \rangle$ and $\langle b_2, a_1 \rangle$, it is not hard to show now that $z \mapsto \gamma(w, z)$ is not the principal character. We have finished checking that $\gamma(\cdot, \cdot)$ satisfies all the conditions of Proposition \ref{pLargeSieve}. 

However, in equation (\ref{eNlX}), we are at the moment summing over rational integers and not over elements of $\Z[\zeta_{\ell}]$. Therefore we aim to replace the sum over the integers by a sum taking place in $\Q(\zeta_{\ell})$. The following lemma is critical.

\begin{lemma}
\label{lChoice}
Fix a root of unity $\zeta_\ell \in \mathbb{C}$ inducing an identification between $\mathbb{F}_\ell$ and $\langle \zeta_\ell \rangle$ by sending $1$ to $\zeta_\ell$. Let $p$ be a prime number such that $p \equiv 1 \bmod \ell$. Then there are canonical bijections
\begin{align*}
\{\phi \in \textup{Epi}(G_\Q, \mathbb{F}_\ell) : \phi \textup{ ramified only at } p\} &\leftrightarrow \{\textup{Dirichlet characters modulo } p \textup{ of order } \ell\} \\
&\leftrightarrow \{\textup{prime ideals of } \Q(\zeta_\ell) \textup{ above } p\}.
\end{align*}
\end{lemma}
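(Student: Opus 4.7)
The plan is to factor both bijections through Dirichlet characters modulo $p$: the left bijection is essentially Kronecker--Weber, and the right one is the $\ell$-th power residue symbol. The fixed identification $\mathbb{F}_\ell \xrightarrow{\sim} \mu_\ell$ sending $1 \mapsto \zeta_\ell$ is what renders every arrow canonical.

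For the first bijection, I would argue as follows. The hypothesis $p \equiv 1 \bmod \ell$ forces $\ell \neq p$. Any $\phi \in \textup{Epi}(G_\Q, \mathbb{F}_\ell)$ unramified at every finite prime other than $p$ factors through $\Gal(\Q(\zeta_{p^\infty})/\Q) \cong \Z_p^\times$ by Kronecker--Weber, and since $\phi$ has image of order $\ell$ coprime to $p$, it factors further through the maximal prime-to-$p$ quotient $\Gal(\Q(\zeta_p)/\Q) \cong (\Z/p\Z)^\times$. Via the fixed identification $\mathbb{F}_\ell \cong \mu_\ell$, this exhibits $\phi$ as a Dirichlet character modulo $p$, and surjectivity of $\phi$ is equivalent to the character having order exactly $\ell$. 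Inflation from $(\Z/p\Z)^\times$ back to $G_\Q$ furnishes the inverse. (When $\ell = 2$ and $p \equiv 3 \bmod 4$, $\phi$ is in fact ramified at the infinite place; this is irrelevant since the phrase ``ramified only at $p$'' tracks only finite places, and the unique real/imaginary quadratic subfield of $\Q(\zeta_p)$ supplies exactly one such $\phi$.)

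For the second bijection, I would use the $\ell$-th power residue symbol. Since $p \equiv 1 \bmod \ell$, $p$ splits completely in $\Q(\zeta_\ell)$, producing $\ell - 1 = [\Q(\zeta_\ell) : \Q]$ primes above $p$; this matches the $\ell - 1$ elements of exact order $\ell$ in the character group of $(\Z/p\Z)^\times$. To each prime $\mathfrak{p} \mid p$ of $\Q(\zeta_\ell)$, I assign the character $\chi_\mathfrak{p}$ defined by
\[
\chi_\mathfrak{p}(a) \equiv a^{(p-1)/\ell} \pmod{\mathfrak{p}},
\]
using that $\mu_\ell \subseteq \Z[\zeta_\ell]$ embeds into the residue field $\Z[\zeta_\ell]/\mathfrak{p} \cong \FF_p$. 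Both sides carry simply transitive actions of $\Gal(\Q(\zeta_\ell)/\Q) \cong (\Z/\ell\Z)^\times$ (by translation on primes, by $\chi \mapsto \chi^a$ on characters), and the assignment is equivariant, hence a bijection.

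The lemma holds no real obstacle: it repackages Kronecker--Weber, the splitting behaviour of $p$ in $\Q(\zeta_\ell)$, and the elementary theory of power residue symbols. The one point to flag is that, once $\zeta_\ell$ is fixed, the composite bijection sends $\mathfrak{p} \mid p$ to the unique epimorphism $\phi_\mathfrak{p} : G_\Q \to \mathbb{F}_\ell$ characterised by $\phi_\mathfrak{p}(\textup{Frob}_q) \equiv q^{(p-1)/\ell} \pmod{\mathfrak{p}}$ for all primes $q \neq p$, and this concrete formula is what will be invoked in the large-sieve and Siegel--Walfisz arguments to follow.
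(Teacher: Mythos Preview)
Your proof is correct and follows essentially the same approach as the paper: Kronecker--Weber (equivalently, class field theory) for the first bijection, and the $\ell$-th power residue symbol $n \mapsto \left(\frac{n}{\mathfrak{p}}\right)_{\Q(\zeta_\ell),\ell}$ for the second. Your equivariance argument for the second bijection (simply transitive $(\Z/\ell\Z)^\times$-actions on both sides) is a cleaner way to phrase what the paper dismisses as ``one directly shows that this is a bijection'', and your closing formula $\phi_{\mathfrak{p}}(\mathrm{Frob}_q) \equiv q^{(p-1)/\ell} \pmod{\mathfrak{p}}$ makes explicit exactly the identity the paper exploits downstream.
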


\begin{proof}
Given a map $\phi \in \textup{Epi}(G_\Q, \mathbb{F}_\ell)$ that is only ramified at $p$, class field theory implies that $\phi$ factors through $\Gal(\Q(\zeta_p)/\Q)$, which is canonically isomorphic to $(\Z/p\Z)^\ast$. Therefore we may associate to $\phi$ an epimorphism from $(\Z/p\Z)^\ast$ to $\mathbb{F}_\ell$. Using our identification between $\mathbb{F}_\ell$ and $\langle \zeta_\ell \rangle$, $\phi$ induces a map $(\Z/p\Z)^\ast \rightarrow \mathbb{C}^\ast$. Extending $\phi$ in the usual way to $\Z$ gives a Dirichlet character modulo $p$ of order equal to $\ell$. This defines the first bijection.

For the second bijection, suppose that we are given an ideal $\mathfrak{p}$ of $\Q(\zeta_\ell)$ above $p$. Since $\mathfrak{p}$ has degree $1$, one readily verifies that the map
\[
n \mapsto \left(\frac{n}{\mathfrak{p}}\right)_{\Q(\zeta_\ell), \ell}
\]
is a Dirichlet character. One directly shows that this is a bijection as well, completing the proof of the lemma.
\end{proof}

By Lemma \ref{lChoice}, the choice of $\psi_{p, \ell, 1}$ from Section \ref{sec:par} is equivalent to choosing a prime ideal $\mathfrak{p}$ of $\Q(\zeta_{\ell})$ above $p$. Observe that $p$ indeed splits in $\Q(\zeta_{\ell})$, because we have the congruence $p \equiv 1 \bmod \ell$ if $\psi_{p, \ell, 1}$ is a homomorphism. We write this unique ideal $\mathfrak{p}$ as $\text{Pref}_\ell(p)$. We extend the definition of $\text{Pref}_\ell(p)$ multiplicatively to a function $\text{Pref}_\ell(x)$ for all squarefree integers $x$ supported in primes congruent to $1$ modulo $\ell$.

We fix a set of integral ideals $I_1, \dots, I_t$ representing every ideal class of $\text{Cl}(\Q(\zeta_{\ell}))$. We now split equation (\ref{eNlX}) in $t^2$ sums, where we insert the additional condition that
\[
\text{Pref}_\ell(w_{a_1, b_1}) \sim I_{s_1}, \quad \text{Pref}_\ell(w_{a_2, b_2}) \sim I_{s_2},
\]
where we fixed some integers $1 \leq s_1, s_2 \leq t$. We introduce new variables $w$ and $z$ in the fundamental domain of $K$ such that
\[
(w) I_{s_1} = \text{Pref}_\ell(w_{a_1, b_1}), \quad (z) I_{s_2} = \text{Pref}_\ell(w_{a_2, b_2}).
\]
We also define new coefficients
\[
\alpha_w
=
\begin{cases}
\alpha_{N_{\Q(\zeta_{\ell})/\Q}(w I_{s_1})} \cdot \left(\frac{N_{\Q(\zeta_{\ell})/\Q}(w)}{I_{s_2}}\right)_{\Q(\zeta_{\ell}), \ell}^{\langle b_2, a_1 \rangle} \cdot \left(\frac{N_{\Q(\zeta_{\ell})/\Q}(I_{s_2})}{w}\right)_{\Q(\zeta_{\ell}), \ell}^{\langle b_1, a_2 \rangle} & \text{if } (w) I_{s_1} \in \text{Im}(\text{Pref}_\ell) \\
0 & \text{otherwise}
\end{cases}
\]
and
\[
\beta_z
=
\begin{cases}
\beta_{N_{\Q(\zeta_{\ell})/\Q}(z I_{s_2})} \cdot \left(\frac{N_{\Q(\zeta_{\ell})/\Q}(z)}{I_{s_1}}\right)_{\Q(\zeta_{\ell}), \ell}^{\langle b_1, a_2 \rangle} \cdot \left(\frac{N_{\Q(\zeta_{\ell})/\Q}(I_{s_1})}{z}\right)_{\Q(\zeta_{\ell}), \ell}^{\langle b_2, a_1 \rangle} & \text{if } (z) I_{s_2} \in \text{Im}(\text{Pref}_\ell) \\
0 & \text{otherwise.}
\end{cases}
\]
Then the inner sum of equation (\ref{eNlX}) becomes $t^2$ sums of the shape
\begin{align}
\label{eLargeSieve}
\frac{1}{(\ell - 1)^2} \left|\sum_w \sum_z \alpha_w \beta_z \gamma(w, z) \right|,
\end{align}
where we divide by $\frac{1}{(\ell - 1)^2}$, because the fundamental domain of $\Q(\zeta_{\ell})$ contains $\ell - 1 = \varphi(\ell)$ generators for each principal ideal.

Finally, there is one more barrier to overcome before we are ready to apply Proposition \ref{pLargeSieve}. The implicit condition in equation (\ref{eLargeSieve}) is that
\[
N_{\Q(\zeta_{\ell})/\Q}(wz) \leq B
\]
for some bound $B > 0$, while Proposition \ref{pLargeSieve} applies only to box shapes given by $N_{\Q(\zeta_{\ell})/\Q}(w) \leq W$ and $N_{\Q(\zeta_{\ell})/\Q}(z) \leq Z$. To this end, we split $w$ and $z$ in intervals of the shape
\[
W \leq N_{\Q(\zeta_{\ell})/\Q}(w) \leq W \left(1 + \frac{1}{(\log X)^{A_3}}\right), \quad Z \leq N_{\Q(\zeta_{\ell})/\Q}(z) \leq Z \left(1 + \frac{1}{(\log X)^{A_3}}\right).
\]
This does not cover the entire region, but for sufficiently large $A_3$ one may bound the resulting leftover trivially. Inserting the bound of Proposition \ref{pLargeSieve} for each such sum into equation (\ref{eNlX}) and summing trivially shows that $N_{\text{linked}}(X)$ ends up in the error term, in the case where $w_{a_1, b_1}$ and $w_{a_2, b_2}$ are both medium, upon choosing $A_2$ sufficiently large in terms of $A_3$.

\subsubsection{Equidistribution with Siegel--Walfisz}
It is now time to bring the Siegel--Walfisz theorem into play. Let $(a_1, b_1) \in \mathcal{I}$ be such that $w_{a_1, b_1}$ is large. By definition of $N_{\text{linked}}(X)$, we know that there exists $(a_2, b_2) \in \mathcal{I}$ satisfying
\begin{itemize}
\item $|w_{a_2, b_2}| > 2$;
\item $\langle a_2, b_1 \rangle \neq 0$ or $\langle a_1, b_2 \rangle \neq 0$ if $\ell > 2$;
\item $\langle a_2, b_1 \rangle + \langle a_1, b_2 \rangle = 1$ if $\ell = 2$.
\end{itemize}
Furthermore, by the work done in Section \ref{ssLS}, we may assume that all pairs $(a_2, b_2)$ satisfying the above properties are such that $w_{a_2, b_2}$ is not medium. Define $\mathcal{I}' := \mathcal{I} - \{(a_1, b_1)\}$. We now expand the product over the primes $p$ dividing $M'$. Then we bound the corresponding contribution by
\begin{multline}
\label{eSWbound}
\sum_{(\alpha_p)_{p \mid M'}} \sum_{(\mathbf{x}_p)_{p \mid M'}} \sum_{(v_a)_{a \in B - \{0\}}} \sum_{(w_{a, \bullet})} \sum_{(w_{a, b})_{(a, b) \in \mathcal{I'}}} \ell^{-m \sum_{(a, b) \in \mathcal{I}'} \tilde{\omega}(w_{a, b})}\\
\left|\sum_{\substack{\Delta(|w_{a_1, b_1}|) \leq \frac{X}{\prod_a \Delta(|v_a|) \prod_{(a, b) \in \mathcal{I}'} \Delta(|w_{a, b}|)} \\ \\ w_{a_1, b_1} \equiv c' \bmod d(\ell) \\ \gcd(w_{a_1, b_1}, M) = 1 \\ p \mid w_{a_1, b_1} \Rightarrow p \bmod M \in H_{a_1}}} \ell^{-m\tilde{\omega}(w_{a_1, b_1})} \left(\prod_{j \in [n]} \prod_{p \mid w_{a_1, b_1}} \delta_{a_1, p \bmod M, j, f}^{\pi_j(b_1)}\right) \times \right. \\
\left(\prod_{(a_2, b_2) \in \mathcal{I}'} \psi_{w_{a_2, b_2},\ell, 1}^{\langle b_1, a_2 \rangle}(\text{Frob}_{w_{a_1, b_1}}) \psi_{w_{a_1, b_1}, \ell, 1}^{\langle b_2, a_1 \rangle}(\text{Frob}_{w_{a_2, b_2}})\right) \times \\
\left. \left(\prod_{p \mid M'} \prod_{j \in [n]} \psi_{w_{a_1, b_1}, \ell, 1}(\text{Frob}_p)^{\pi_j(\mathbf{x}_p) \pi_j(a_1)} \overline{\chi_j(\alpha_p)}^{\pi_j(\mathbf{x}_p)}\right) \right|,
\end{multline}
where we also stipulate that the $w_{a, b}$ are squarefree, pairwise coprime and satisfy $p \mid w_{a, b} \Rightarrow p \equiv 0, 1 \bmod \ell$. We now define the multiplicative function $h$ supported on squarefree integers and given on the primes coprime to $\ell$ by
\begin{multline*}
h(q) = \ell^{-m} \times \mathbf{1}_{\gcd(q, M \prod_a v_a \prod_{(a, b) \in \mathcal{I}'} w_{a, b}) = 1} \times \mathbf{1}_{q \mod M \in H_{a_1}} \times \mathbf{1}_{q \equiv 1 \bmod \ell} \times \\
\prod_{j \in [n]} \delta_{a_1, q \bmod M, j, f}^{\pi_j(b_1)} \times \prod_{(a_2, b_2) \in \mathcal{I}'} \psi_{w_{a_2, b_2}, \ell, 1}^{\langle b_1, a_2 \rangle}(\text{Frob}_q) \psi_{q, \ell, 1}^{\langle b_2, a_1 \rangle}(\text{Frob}_{w_{a_2, b_2}}) \times \\
\prod_{p \mid M'} \prod_{j \in [n]} \psi_{q, \ell, 1}(\text{Frob}_p)^{\pi_j(\mathbf{x}_p) \pi_j(a_1)} \overline{\chi_j(\alpha_p)}^{\pi_j(\mathbf{x}_p)}.
\end{multline*}
We claim that
\begin{align}
\label{ehSW}
\sum_{1 \leq q \leq X} h(q) \log q = O_A\left(\frac{X}{(\log X)^A}\right)
\end{align}
for every $A > 0$. Applying Theorem \ref{thm:GK} then shows that
\[
\sum_{1 \leq n \leq X} h(n) = O_A\left(\frac{X}{(\log X)^A}\right)
\]
for every $A > 0$. Using this for a sufficiently large $A$ and inserting this into equation (\ref{eSWbound}) gives the desired upper bound for equation (\ref{eSWbound}) after a trivial summation.

It remains to prove equation (\ref{ehSW}). For now we assume that $\ell > 2$, and we will later sketch the modifications to get the case $\ell = 2$. Before we proceed, let us remark that the sum
\[
\sum_{1 \leq q \leq X} \psi_{q, \ell, 1}(\text{Frob}_{w_{a_2, b_2}})
\]
need not oscillate. Indeed, recall that $\psi_{q, \ell, 1}$ depends on our choice of $\sigma_q$ and for a dramatically poor choice we might (for example) have
\[
\psi_{q, \ell, 1}(\text{Frob}_{w_{a_2, b_2}}) \in \{1, \zeta_{\ell}\}.
\]
With this in mind, let us now work towards the proof of equation (\ref{ehSW}). We now pass to $\Q(\zeta_{\ell})$. We have already seen in Lemma \ref{lChoice} that the choice of $\psi_{q, \ell, 1}$ is equivalent to a choice of prime ideal $\mathfrak{p}$ of $\Q(\zeta_{\ell})$ above $q$. We also remind the reader that this ideal was called $\text{Pref}_\ell(q)$. Consider the Hecke character $\rho$ of $\Q(\zeta_{\ell})$ defined as
\begin{multline*}
\rho(\pp) = \mathbf{1}_{\gcd(N_{\Q(\zeta_{\ell})/\Q}(\pp), M \prod_a v_a \prod_{(a, b) \in \mathcal{I}'} w_{a, b}) = 1} \times \\
\prod_{(a_2, b_2) \in \mathcal{I}'} \left(\frac{N_{\Q(\zeta_{\ell})/\Q}(\pp)}{\text{Pref}_\ell(w_{a_2, b_2})}\right)_{\Q(\zeta_{\ell}), \ell}^{\langle b_1, a_2 \rangle} \left(\frac{w_{a_2, b_2}}{\pp}\right)_{\Q(\zeta_{\ell}), \ell}^{\langle b_2, a_1 \rangle} \times \prod_{p \mid M'} \prod_{j \in [n]} \left(\frac{p}{\mathfrak{p}}\right)_{\Q(\zeta_{\ell}), \ell}^{\pi_j(\mathbf{x}_p) \pi_j(a_1)}.
\end{multline*}
We have the fundamental identity
\begin{align}
\label{ehPref}
\ell^m \cdot h(q) = \mathbf{1}_{q \bmod M \in H_{a_1}} \cdot \rho(\text{Pref}_\ell(q)) \cdot \zeta(q),
\end{align}
where $\zeta(q)$ is an $\ell$-th root of unity depending only on $q \bmod M$. Since $|w_{a_2, b_2}| > 2$ and since $w_{a_2, b_2}$ is coprime to $M$, we see that $\rho$ is a non-trivial character and so is $\rho \chi$ for any Dirichlet character $\chi$ modulo $M$. Then the main theorem of Goldstein \cite{Goldstein} yields
\begin{align}
\label{eSWOsc}
\sum_{N_{\Q(\zeta_{\ell})/\Q}(\mathfrak{p}) \leq X} (\rho \chi)(\mathfrak{p}) = O_A\left(\frac{X}{(\log X)^A}\right)
\end{align}
for every $A > 0$. Note that Goldstein's result formally only applies to primitive characters, but one readily passes to non-primitive characters by trivially bounding the contribution of the primes dividing the conductor. Here we use that the norm of the conductor is bounded by a suitable power of $\log X$ depending only on our starting abelian group.

Our goal is now to deduce equation (\ref{ehSW}) from equation (\ref{ehPref}) and equation (\ref{eSWOsc}). But as we have emphasised before, this may not be possible if we made a dramatically poor choice of $\sigma_q$. We say that a choice $(\text{Pref}_\ell(q))_{q \equiv 1 \bmod \ell, q \leq X}$ is poor if there exists some integer $\exp((\log X)^{A_1}) \leq n \leq X$, some Hecke character $\psi$ of $\Q(\zeta_\ell)$ of order $\ell$ such that the norm of the conductor is bounded by $X$ and some $a \in \mathbb{F}_\ell^\ast$ such that
\begin{align}
\label{eBad}
\left||\{q \leq n : \psi(\text{Pref}_\ell(q)) = \zeta_\ell^a\}| - \frac{|\{q \leq n : \psi(\text{Pref}_\ell(q)) \neq 1\}|}{\ell - 1}\right| \geq n^{3/4}.
\end{align}
For fixed $n$, $\psi$ and $a$, observe that this is an entirely combinatorial condition. Indeed, $\psi(\text{Pref}_\ell(q))$ runs through all values of $\zeta_\ell^a$ with $a \in \mathbb{F}_\ell^\ast$ exactly once as we run through the choices of $\text{Pref}_\ell(q)$. For fixed $n$, $\psi$ and $a$, we bound the event (\ref{eBad}) using Hoeffding's inequality with the probability space corresponding to the set of choices $(\text{Pref}_\ell(q))_{q \equiv 1 \bmod \ell, q \leq X}$. This also gives a bound for the event that $(\text{Pref}_\ell(q))_{q \equiv 1 \bmod \ell, q \leq X}$ is poor by using the union bound. This shows that for $X$ large enough, we may pick $\sigma_q$ such that $(\text{Pref}_\ell(q))_{q \equiv 1 \bmod \ell, q \leq X}$ is not poor. In particular, we may pick one such choice of $\sigma_q$ at the start of our proof. Then we have that
\[
\sum_{1 \leq q \leq X} (\rho \chi)(\text{Pref}_\ell(q)) = O_A\left(\frac{X}{(\log X)^A}\right),
\]
which implies that
\[
\sum_{\substack{1 \leq q \leq X \\ q \equiv a \bmod M}} \rho(\text{Pref}_\ell(q)) = O_A\left(\frac{X}{(\log X)^A}\right)
\]
for every invertible class $a \bmod M$.

Thanks to the above equation and equation (\ref{ehPref}), we obtain that
\[
\ell^m \cdot \sum_{1 \leq  q \leq X} h(q) = \sum_{\substack{\lambda \in (\Z/M\Z)^\ast \\ \lambda \in H_{a_1}}} \zeta(\lambda) \sum_{\substack{1 \leq q \leq X \\ q \equiv \lambda \bmod M}} \rho(\text{Pref}_\ell(q)) = O_A\left(\frac{X}{(\log X)^A}\right).
\]
We deduce that equation (\ref{ehSW}) holds by partial summation.

In the case $\ell = 2$, we must also contend with the congruence condition $w_{a_1, b_1} \equiv c' \bmod 16$. We detect this congruence condition using Dirichlet characters and may now proceed as above.

\subsection{The main term}
We now use the results from Section \ref{sComb} to finish the proof. We distinguish two cases.

Let us start with the case $\ell > 2$. We apply Lemma \ref{lBlock2} with
\[
Y = \{(a, b) \in \mathcal{I} : w_{a, b} \text{ large}\}.
\]
By construction of $N_{\text{main}}(X)$ we have that
\begin{itemize}
\item $|Y| \geq \ell^n - |B|$ thanks to the material in Subsection \ref{ssLarge};
\item $\langle a_2, b_1 \rangle = 0$ for all $(a_1, b_1), (a_2, b_2) \in Y$ thanks to the material in Subsection \ref{ssLinked}.
\end{itemize}
Therefore all conditions of Lemma \ref{lBlock2} are satisfied. We conclude that
\[
Y = \{(a, 0) : a \in \mathbb{F}_\ell^n - B\}.
\]
We now observe that for every $(a_1, b_1) \in \mathcal{I}$ with $b_1 \neq 0$ there exists some $(a, 0) \in Y$ such that $\langle a, b_1 \rangle \neq 0$. By Subsection \ref{ssLinked} and by definition of $N_{\text{main}}(X)$, this forces $w_{a, b} = 1$ for all $b \neq 0$. Therefore $N_{\text{main}}(X)$ becomes
\begin{multline*}
N_{\text{main}}(X) = \sum_{\substack{(v_a)_{a \in B - \{0\}} \\ p \mid v_a \Rightarrow p \bmod M \in H_a}} \sum_{\substack{(w_{a, 0})_a \\ w_{a, 0} \text{ large} \\ \prod_a \Delta(|v_a|) \prod_a \Delta(|w_{a, 0}|) \leq X \\ v_a \equiv c_a \bmod d(\ell) \\ \gcd(w_{a, 0}, M) = 1 \\ p \mid w_{a, 0} \Rightarrow p \bmod M \in H_a}} \mathbf{1}_{\mathbf{v} \in \mathcal{A}} \times \ell^{-m \sum_{a, 0} \tilde{\omega}(w_{a, 0})} \times \\ 
\prod_{p \mid M'} \left(\frac{1}{\ell^n} \sum_{\alpha \in g(p)} \sum_{\mathbf{x} \in \mathbb{F}_\ell^n} \prod_{j \in [n]} \psi_j(\text{Frob}_p)^{\pi_j(\mathbf{x})} \overline{\chi_j(\alpha)}^{\pi_j(\mathbf{x})}\right),
\end{multline*}
where we also demand that all $w_{a, 0}$ are large. Expanding the product over $M'$, we get a combination of non-principal characters of small conductor unless $\mathbf{x}$ is the zero vector. Another application of Siegel--Walfisz, where we sum over an appropriate variable $w_{a, 0}$ depending on $\mathbf{x}$, yields the asymptotic
\[
N_{\text{main}}(X) = \sum_{\substack{(v_a)_{a \in B - \{0\}} \\ p \mid v_a \Rightarrow p \bmod M \in H_a}} \sum_{\substack{(w_{a, 0})_a \\ w_{a, 0} \text{ large} \\ \mathbf{v} \in \mathcal{A} \\ \prod_a \Delta(|v_a|) \prod_a \Delta(|w_{a, 0}|) \leq X \\ v_a \equiv c_a \bmod d(\ell) \\ \gcd(w_{a, 0}, M) = 1 \\ p \mid w_{a, 0} \Rightarrow p \bmod M \in H_a}} \hspace{-0.5cm} \ell^{-m \sum_{a, 0} \tilde{\omega}(w_{a, 0})} \prod_{p \mid M'} \frac{|g(p)|}{\ell^n} + O_A\left(\frac{X}{(\log X)^A}\right).
\]
We may also remove the condition that $w_{a, 0}$ is large with an acceptable error term. Therefore we conclude that
\[
N(X) = \sum_{\substack{(v_a)_{a \in B - \{0\}} \\ p \mid v_a \Rightarrow p \bmod M \in H_a}} \sum_{\substack{(w_{a, 0})_a \\ \mathbf{v} \in \mathcal{A} \\ \prod_a \Delta(|v_a|) \prod_a \Delta(|w_{a, 0}|) \leq X \\ v_a \equiv c_a \bmod d(\ell) \\ \gcd(w_{a, 0}, M) = 1 \\ p \mid w_{a, 0} \Rightarrow p \bmod M \in H_a}} \hspace{-0.75cm} \ell^{-m \sum_{a, 0} \tilde{\omega}(w_{a, 0})} \prod_{p \mid M'} \frac{|g(p)|}{\ell^n} + O_A\left(X (\log X)^{\alpha - 1- \delta}\right)
\]
for some $\delta > 0$. Since $\ell$ is odd, we have $d(\ell) = 1$. We directly evaluate the above sum using Theorem \ref{thm:GK} with
\begin{multline*}
\alpha = \sum_{a \in B - \{0\}} \frac{1}{\varphi(\text{lcm}(M, \ell))} \sum_{\lambda \in (\Z/\text{lcm}(M, \ell)\Z)^\ast} \mathbf{1}_{\lambda \equiv 1 \bmod \ell} \cdot \mathbf{1}_{\lambda \bmod M \in H_a} + \\
\frac{1}{\ell^m} \sum_{a \in A - B} \frac{1}{\varphi(\text{lcm}(M, \ell))} \sum_{\lambda \in (\Z/\text{lcm}(M, \ell)\Z)^\ast} \mathbf{1}_{\lambda \equiv 1 \bmod \ell} \cdot \mathbf{1}_{\lambda \bmod M \in H_a},
\end{multline*}
which is readily verified to be the correct exponent for the logarithm. One also finds that the leading constant equals the conditionally convergent product
\begin{multline*}
C_{\text{lead}} = \prod_{\substack{p \equiv 1 \bmod \ell \\ \gcd(p, M) = 1}} \left(1 + \frac{\sum_{\substack{a \in B - \{0\} \\ p \bmod M \in H_a}} 1 + \sum_{\substack{a \in A - B \\ p \bmod M \in H_a}} 1/\ell^m}{p}\right) \cdot \prod_p \left(1 - \frac{1}{p}\right)^\alpha \cdot \\ 
\left(\prod_{p \mid M'} \frac{|g(p)|}{\ell^n}\right) \cdot \frac{1}{\Gamma(\alpha)} \cdot \left(\sum_{\substack{(d_a)_{a \in A - \{0\}} \\ d_a \mid \ell \\ \gcd(d_a, M) = 1}} \frac{\mu^2\left(\prod_{a \in A - \{0\}} d_a\right)}{\prod_{a \in A - \{0\}} d_a^2}\right). 
\end{multline*}
In particular, if $\ell > 2$, we always have $C_{\text{lead}} > 0$. It is also clear from the above expression that $C_{\text{lead}}$ is uniformly bounded.

Let us now see how to modify the above argument for $\ell = 2$. In this case we apply Lemma \ref{lBlock} with
\[
Y = \{(a, b) \in \mathcal{I} : w_{a, b} \text{ large}\}.
\]
By construction of $N_{\text{main}}(X)$ we have that
\begin{itemize}
\item $|Y| \geq 2^n - |B|$ due to Subsection \ref{ssLarge};
\item $\langle a_1, b_2 \rangle + \langle a_2, b_1 \rangle = 0$ for all $(a_1, b_1), (a_2, b_2) \in Y$ due to Subsection \ref{ssLinked}.
\end{itemize}
Lemma \ref{lBlock} yields
\[
Y = \{(a, f(a)) : a \in \mathbb{F}_2^n\}
\]
for some alternating map $f$. The case $B = \mathbb{F}_2^n$ is easy, so let us suppose that $B$ is a proper subspace of $\FF_2^n$. We now observe that for every $(a_1, b_1) \in \mathcal{I}$ with $b_1 \neq f(a_1)$ there exists some $(a, f(a)) \in Y$ such that 
\[
\langle a_1, f(a) \rangle + \langle a, b_1 \rangle \neq 0.
\]
Indeed, take any $a \not \in B$ such that $\langle a, f(a_1) - b_1 \rangle \neq 0$.

By Subsection \ref{ssLinked} and by definition of $N_{\text{main}}(X)$, this forces $|w_{a, b}| \leq 2$ for all $(a, b) \in \mathcal{I} - Y$ and therefore $w_{a, b} \in \{-2, -1, 1, 2\}$ and thus $w_{a, b} = 1$ unless $b = \bullet$. We now analyze $N_{\text{main}}(X)$ by splitting over congruence conditions on $w_{a, f(a)}$ modulo $16M$ and the parity of the number of prime divisors $p$ such that $p \bmod M$ lies in a given coset of $G$. Crucially, this fixes both
\[
\prod_{j \in [n]} \prod_{p \mid w_{a_1, f(a_1)}} \delta_{a_1, p \bmod M, j, f}^{\pi_j(f(a_1))}
\]
and
\[
\prod_{p \mid M'} \left(\frac{1}{2^n} \sum_{\alpha \in g(p)} \sum_{\mathbf{x} \in \mathbb{F}_2^n} \prod_{j \in [n]} \psi_j(\text{Frob}_p)^{\pi_j(\mathbf{x})} \overline{\chi_j(\alpha)}^{\pi_j(\mathbf{x})}\right).
\]
Therefore, using quadratic reciprocity and the values of $c_a$ modulo $16$ to also eliminate the terms of the shape
\[
\psi_{w_{a_2, b_2}, \ell, 1}(\text{Frob}_{w_{a_1, b_1}}) \psi_{w_{a_1, b_1}, \ell, 1}(\text{Frob}_{w_{a_2, b_2}}),
\]
our main term $N_{\text{main}}(X)$ becomes
\[
N_{\text{main}}(X) = \sum_f \sum_{(c_a)} \sum_{(e_{a, \lambda})} \sum_{\substack{(v_a)_{a \in B - \{0\}} \\ p \mid v_a \Rightarrow p \bmod M \in H_a}} \epsilon((c_a), (e_a)) \hspace{-0.3cm} \sum_{\substack{(w_{a, f(a)})_a \\ \mathbf{v} \in \mathcal{A} \\ \prod_a \Delta(|v_a|) \prod_a \Delta(|w_{a, f(a)}|) \leq X \\ v_a \equiv c_a \bmod 16M \\ \gcd(w_{a, f(a)}, M) = 1 \\ p \mid w_{a, f(a)} \Rightarrow p \bmod M \in H_a \\ \omega_\lambda(v_a) \equiv e_{a, \lambda} \bmod 2}} 2^{-m \sum_a \tilde{\omega}(w_{a, f(a)})},
\]
where $\epsilon((c_a), (e_a))$ is a real number, bounded in absolute value by $1$, depending only on $c_a$ and $e_a$, where $\omega_\lambda(v_a)$ denotes the number of prime divisors $p$ of $v_a$ such that $p \bmod M$ is in the coset $\lambda + G$ and where the sum over $f$ is over all alternating maps. We now detect the congruence condition $v_a \equiv c_a \bmod 16M$ using Dirichlet characters and the condition $\omega_\lambda(v_a) \equiv e_{a, \lambda} \bmod 2$ using
\[
\frac{1}{2}\left(1 + (-1)^{e_{a, \lambda} + \omega_\lambda(v_a)}\right).
\]
To finish the proof, we proceed as in the case $\ell = 2$ to get the asymptotic formula for $N_{\text{main}}(X)$ by several applications of Theorem \ref{thm:GK}.

Let us now check the final part of Theorem \ref{tMultiReduction}. It is still readily verified that $C_{\text{lead}}$ is bounded in terms of $n$ only. For the final part, one gets that $C_{\text{lead}} > 0$ by directly adapting the above argument for $\ell > 2$ (including the application of Siegel--Walfisz to get the main term for the conditions on the primes $p$ dividing $M'$) observing that
\[
\delta_{a_1, p \bmod M, j, f}^{\pi_j(f(a_1))} = 1
\]
and
\[
\psi_{w_{a_2, b_2}, 2, 1}(\text{Frob}_{w_{a_1, b_1}}) \psi_{w_{a_1, b_1}, 2, 1}(\text{Frob}_{w_{a_2, b_2}}) = 1
\]
under the assumptions of the final part of Theorem \ref{tMultiReduction}.

\section{Explicit constants}
\label{sec:explicit}
In general, the leading constant in Theorem \ref{thm:main} is an infinite sum of Euler products, which is unlikely to be expressible in a simple fashion. However, in specific cases it is possible to write down an explicit description of the leading constant. In particular, for purely multicyclic extensions, since $C_{\text{lead}}$ in Theorem \ref{tMultiReduction} can be made explicit so can the leading constant in the count of extensions satisfying the weak approximation property. As a further example, in this section we will also compute the leading constant for $C_2 \times C_3 \times C_3$ as this is the smallest group $A$ for which a positive proportion of fields, when ordered by discriminant, have the weak approximation property.

\subsection{Multicyclic Extensions}
\label{sec:multicyc}
Before writing down the explicit asymptotic for the number of $\mathbb{F}_\ell^n$-extensions of $\Q$ whose norm one torus satisfies weak approximation, we start by explicitly enumerating the $\mathbb{F}_\ell^n$-extensions of bounded absolute discriminant, for the purpose of comparison. While the asymptotic formula, with an inexplicit constant, dates back to the work of Wright~\cite{Wright}, to the authors' knowledge, the only examples where the leading constant is explicitly known are the cases:
\begin{itemize}
\item $\ell =2$ (due independently to la Bret{\`e}che--Kurlberg--Shparlinski~\cite{lBKS} and Fritsch~\cite{Fritsch}), 
\item $\ell =3$, $n=2$ (due to Mammo~\cite{Mammo}),
\item $n=1$, for any prime $\ell$ over any base field (due to Cohen--Diaz-y-Diaz--Olivier~\cite{cyclic}).
\end{itemize}
For a survey of some of the results known about explicit constants in Malle's conjecture, see Cohen--Diaz-y-Diaz--Olivier~\cite{constants}.
We restrict attention to the case $\ell>2$, to avoid the additional complications of the oddest prime, and $n \geq 2$, since weak approximation always holds on the norm one torus of a cyclic extension. In this setting, all but the simplest case of the following theorem is new.

\begin{theorem}
\label{thm:multicycWright}
Let $\ell>2$ be a prime and $n>0$ an integer. For a real number $X>1$, denote by $N_{\ell,n}(X)$ the number of field extensions $K/\Q$ such that $\Gal(K/\Q) \cong \mathbb{F}_\ell^n$ and $\textup{Disc}(K/\Q) \leq X$. Then, as $X \rightarrow \infty$, we have
\[
N_{\ell,n}(X) = \frac{C}{\Gamma \left(\frac{\ell^n-1}{\ell-1} \right)} X^{\frac{1}{\ell^n - \ell^{n-1}}} (\log X)^{\frac{\ell^n-1}{\ell-1}-1} + O(X^{\frac{1}{\ell^n - \ell^{n-1}}} (\log X)^{\frac{\ell^n-1}{\ell-1}-2}),
\]
where
\[
C
=
\frac{1}{\prod_{i=0}^{n-1} (\ell^n - \ell^i)} \frac{\left( 1 + \frac{\ell^n-1}{\ell^2} \right)}{(\ell^n - \ell^{n-1})^{\frac{\ell^n -1}{\ell -1} -1}} \prod_{p \equiv 1 \bmod \ell} \left( 1 + \frac{\ell^n -1}{p} \right) \prod_p \left( 1 - \frac{1}{p} \right)^{\frac{\ell^n-1}{\ell-1}}.
\]
\end{theorem}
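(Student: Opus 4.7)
The plan is to convert the counting problem into a sum of a multiplicative function and invoke the Granville--Koukoulopoulos Theorem \ref{thm:GK}. Write $A = \mathbb{F}_\ell^n$, set $Y := X^{1/(\ell^n - \ell^{n-1})}$, and let $\alpha := (\ell^n - 1)/(\ell - 1)$. Since the map $\psi: \textup{Epi}(G_\Q, A) \to \{K : \Gal(K/\Q) \cong A\}$ has fibres of size $|\textup{Aut}(A)| = \prod_{i = 0}^{n-1}(\ell^n - \ell^i)$, the bijection $\textup{Par}$ together with Theorems \ref{tDisc} and \ref{tWildDisc} gives
\[
N_{\ell,n}(X) = \frac{1}{|\textup{Aut}(A)|} \#\Bigl\{ \mathbf{v} = (v_a)_{a \in A - \{0\}} \in \mathcal{B} : \prod_{a \in A - \{0\}} \Delta(|v_a|) \leq Y \Bigr\},
\]
since for $\ell > 2$ the discriminant exponent of every ramified prime $p \ne \ell$ equals $\ell^n - \ell^{n-1}$, while the wild contribution at $\ell$ (when ramified) equals $2(\ell^n - \ell^{n-1})$ by the conductor--discriminant formula, exactly matching the definition of $\Delta$.

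Next, I would remove the surjectivity constraint $\mathbf{v} \in \mathcal{B}$ by inclusion--exclusion over the subspace lattice of $A$: the main term comes from dropping the condition $\langle a : v_a \neq 1 \rangle = A$ altogether, and contributions from proper subspaces of dimension $k < n$ are bounded by $Y(\log Y)^{(\ell^k-1)/(\ell-1) - 1}$, which is absorbed into the claimed error term. The remaining task is to evaluate
\[
S(Y) := \sum_{\substack{\mathbf{v} \in \mathcal{A} \\ \prod_a \Delta(|v_a|) \leq Y}} 1.
\]
Parametrising the tuple $\mathbf{v}$ by $N := \prod_a \Delta(|v_a|)$ together with the assignment of each prime divisor to an index in $A - \{0\}$, and noting that (i) any prime $p \neq \ell$ dividing some $v_a$ must satisfy $p \equiv 1 \pmod \ell$, (ii) each such prime may be assigned to one of $\ell^n - 1$ indices, and (iii) the prime $\ell$ contributes $\Delta(\ell) = \ell^2$ together with $\ell^n - 1$ choices of index, one obtains
\[
S(Y) = \sum_{\substack{M \leq Y,\ M \ \textup{squarefree} \\ \gcd(M, \ell) = 1,\ p \mid M \Rightarrow p \equiv 1 \bmod \ell}} (\ell^n - 1)^{\omega(M)} \, + \, (\ell^n - 1) \sum_{\substack{M \leq Y/\ell^2 \\ \textup{same conditions}}} (\ell^n - 1)^{\omega(M)}.
\]

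The final step is to apply Theorem \ref{thm:GK} to the multiplicative function $h(M) := (\ell^n - 1)^{\omega(M)}$ supported on squarefree integers coprime to $\ell$ with prime divisors $\equiv 1 \pmod \ell$. The hypothesis (\ref{eOnPrimes}) is the prime number theorem in arithmetic progressions modulo $\ell$ (via Siegel--Walfisz), which yields $\sum_{p \leq x} h(p) \log p = \alpha x + O_A(x / (\log x)^A)$. Applying Theorem \ref{thm:GK} with $J = 1$ produces
\[
\sum_{M \leq Y} h(M) = \frac{c_0}{\Gamma(\alpha)} Y (\log Y)^{\alpha - 1} + O\bigl(Y (\log Y)^{\alpha - 2}\bigr),
\]
where
\[
c_0 = \prod_{p \equiv 1 \bmod \ell} \Bigl(1 + \frac{\ell^n - 1}{p}\Bigr) \prod_p \Bigl(1 - \frac{1}{p}\Bigr)^\alpha.
\]
Combining the two sums contributes the factor $1 + (\ell^n - 1)/\ell^2$, rewriting $\log Y = (\log X)/(\ell^n - \ell^{n-1})$ produces the factor $(\ell^n - \ell^{n-1})^{-(\alpha-1)}$, and dividing by $|\textup{Aut}(A)|$ assembles the claimed leading constant $C$.

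I expect the computational bookkeeping to be routine; the only mildly delicate point is ensuring that the error from inclusion--exclusion over proper subspaces and the error from Theorem \ref{thm:GK} are both of size $O(X^{1/(\ell^n - \ell^{n-1})} (\log X)^{\alpha - 2})$, which is immediate since the next-largest subspace contributes a power of $\log X$ that is strictly smaller by at least $\ell^{n-1} - 1 \geq 1$.
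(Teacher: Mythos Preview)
Your proposal is correct and follows essentially the same route as the paper: parametrise via $\mathcal{A}$, split according to whether $\ell$ ramifies, reduce to the multiplicative function $(\ell^n-1)^{\omega(M)}$ on squarefree integers supported on primes $\equiv 1 \pmod \ell$, and apply Theorem \ref{thm:GK} with the Siegel--Walfisz input. The only cosmetic difference is that you explicitly remove the surjectivity constraint by inclusion--exclusion over proper subspaces, whereas the paper silently counts all of $\mathcal{A}$ and relies on the non-surjective tuples contributing a lower power of $\log X$; your version is the more careful of the two (note that your final inequality $\ell^{n-1}-1 \geq 1$ fails when $n=1$, but there the only non-surjective homomorphism is the trivial one, so the conclusion is unaffected).
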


\begin{remark}
The authors are fairly confident that one could prove an asymptotic with an explicit degree $\frac{\ell^n-1}{\ell-1}-1$ polynomial in $\log X$ and power saving error term by replacing the application of Theorem \ref{thm:GK} in the proof with a customised application of the Selberg--Delange approach (perhaps multidimensional in nature, as in \cite{lBKS}). 
\end{remark}

\begin{proof}
To begin, we parametrise $\mathbb{F}_\ell^n$-extensions of $\Q$ as in Section \ref{sec:par}. That is to say, each such extension corresponds to a tuple $(v_a)_{a \in \mathbb{F}_\ell^n - \{ 0\}}$, where the entries are squarefree and pairwise coprime. Furthermore, we stipulate that the $v_a$ must satisfy
\begin{itemize}
\item we have
\[
p \equiv 1 \bmod \frac{\ord(a)}{p^{v_p(\ord(a))}}
\]
for all prime divisors $p$ of $v_a$;
\item if $\textup{ord}(a) > 2$, then $v_a > 0$.
\end{itemize}
The latter condition forces each entry in the tuple to be positive, while the former requires each prime divisor of $v_a$ which is not equal to $\ell$ to be congruent to 1 modulo $\ell$. Finally, we observe that the discriminant of the field associated to such a tuple is given by
\[
\begin{cases}
\prod v_a^{\ell^n - \ell^{n-1}} \quad{} &\text{ if } \ell \nmid v_a \text{ for all } a,\\
\ell^{2(\ell^n - \ell^{n-1})} \prod (v_a')^{\ell^n - \ell^{n-1}}  \quad{} &\text{ if } \exists a \text{ s.t. } \ell \mid v_a,
\end{cases}\]
where $v_a' = v_a \ell^{-v_{\ell}(v_a)}$.

Note that the parametrisation just discussed does not parametrise $\mathbb{F}_\ell^n$-extensions of $\Q$ but rather homomorphisms from $G_{\Q}$ to $\mathbb{F}_\ell^n$. However, for each field there are several such homomorphisms. In order to correct this overcount, we need to divide by the size of the automorphism group of $\mathbb{F}_\ell^n$.

Hence, splitting up the two cases where $\ell \nmid \prod v_a$ and $\ell \mid \prod v_a$, we find that the count for $N_{\ell, n}(X)$ may be given by the sum
\[
\frac{1}{|\Aut(\mathbb{F}_\ell^n)|}\left[
\sum_{\substack{\mathbf{v} \in \Z^{\ell^n-1}_{>0} \\ p \mid v_i \Rightarrow p \equiv 1 \bmod \ell \\ \gcd(v_i, 2 \ell) =1 \\ \prod_i v_i^{\ell^n - \ell^{n-1}} \leq X }} \mu^2\left( \prod_i v_i \right)
+
(\ell^n-1)
\sum_{\substack{\mathbf{v} \in \Z^{\ell^n-1}_{>0}\\ p \mid v_i \Rightarrow p \equiv 1 \bmod \ell \\ \gcd(v_i, 2 \ell) =1 \\ \prod_i v_i^{\ell^n - \ell^{n-1}} \leq X/\ell^{2( \ell^n - \ell^{n-1}}) }} \mu^2\left( \prod_i v_i \right) \right].
\]
We can compute the size of the sums appearing here using Theorem \ref{thm:GK}. Indeed, we may write for any $V$
\[
\sum_{\substack{\mathbf{v} \in \Z^{\ell^n-1}_{>0}\\ p \mid v_i \Rightarrow p \equiv 1 \bmod \ell \\ \gcd(v_i, 2 \ell) =1 \\ \prod_i v_i \leq V }} \mu^2\left( \prod_i v_i \right)
=
\sum_{\substack{ v \in \Z_{>0} \\ v \leq V}} \mu^2(v) g(v),
\]
where $g(v)$ is the $\ell^n-1$ fold convolution of the indicator function $\mathbf{1}_{p \mid v \Rightarrow p \equiv 1 \bmod \ell}(v)$. Note that we have dropped the coprimality condition $\gcd(v_i, 2\ell) = 1$, since it is already implied by the constraint that prime divisors of $v$ must be congruent to 1 mod $\ell$. On primes, we can estimate the sum
\begin{align*}
\sum_{p \leq x} g(p) \log p &= \sum_{p \leq x} ( \ell^n -1) \mathbf{1}_{p \mid v \Rightarrow p \equiv 1 \bmod \ell}(v) \log p\\ &= (\ell^n-1) \sum_{\substack{p \leq x \\ p \equiv 1 \bmod \ell}} \log p
= \frac{\ell^n-1}{\ell-1} x + O_A\left( \frac{x}{(\log x)^A} \right),
\end{align*} by the classical Siegel--Walfisz theorem. Therefore, applying Theorem \ref{thm:GK}, we have
\[
\sum_{\substack{ v \in \Z_{>0} \\ v \leq V}}\! \mu^2(v) g(v)
=
\frac{V (\log V)^{\frac{\ell^n-1}{\ell-1}-1}}{\Gamma \left(\frac{\ell^n-1}{\ell-1} \right)} \prod_{p \equiv 1 \bmod \ell}\! \left( 1 + \frac{\ell^n -1}{p} \right) \prod_p\! \left( 1 - \frac{1}{p} \right)^{\frac{\ell^n-1}{\ell-1}} \!\!
+ 
O\!\left(\! V (\log V)^{\frac{\ell^n-1}{\ell-1}-2}\! \right)\!.
\]
Combining this with our previous expression for $N_{\ell,n}(X)$ completes the claim.
\end{proof}

\begin{remark}
To compare with Mammo's result, set $\ell = 3$ and $n=2$. Then the main term for $N_{3,2}(X)$ is given by
\[
\frac{1}{(3^2-1)(3^2-3)}
\left( 1 + \frac{3^2 -1}{3^2}\right)
\frac{X^{\frac{1}{6}} (\log X)^3}{6^3 \Gamma(4)} \prod_{p \equiv 1 \bmod 3} \left(1 + \frac{8}{p} \right) \prod_p \left( 1 - \frac{1}{p} \right)^4
\]\[=\frac{1}{48}\frac{17}{9}
\frac{X^{\frac{1}{6}} (\log X)^3}{6^4} \left( \frac{2}{3} \right)^4\prod_{p \equiv 1 \bmod 3} \left(1 + \frac{8}{p} \right)\left( 1 - \frac{1}{p} \right)^4 \prod_{p \equiv 2 \bmod 3} \left( 1 - \frac{1}{p} \right)^4.
\]
This Euler product can be simplified by comparison with $L(1, \chi)^4$ for $\chi$ the non-principal Dirichlet character mod 3. Thus our leading constant becomes
\[\frac{17X^{\frac{1}{6}} (\log X)^3}{2^43^{11}} L(1, \chi)^4 \prod_{p \equiv 1 \bmod 3} \left(1 + \frac{8}{p} \right)\left( 1 - \frac{1}{p} \right)^8 \prod_{p \equiv 2 \bmod 3} \left( 1 - \frac{1}{p^2} \right)^4.\]
By the analytic class number formula, $L(1, \chi) = \frac{2 \pi}{6\sqrt{3}}$. This means that we have
\[
N_{3,2}(X) \sim
\frac{17\pi^4X^{\frac{1}{6}} (\log X)^3}{2^43^{17}}  \prod_{p \equiv 1 \bmod 3} \left(1 + \frac{8}{p} \right)\left( 1 - \frac{1}{p} \right)^8 \prod_{p \equiv 2 \bmod 3} \left( 1 - \frac{1}{p^2} \right)^4,
\]
which recovers the result of \cite[$\S$ 7, Case 2]{Mammo}.
Since the proof in \cite{Mammo} relies on an application of the Ikehara--Delange Tauberian theorem there is no error term given so our result represents an improvement in that aspect.
\end{remark}

We now turn to the problem of weak approximation on the norm one torus. Our count will be provided by Theorem \ref{tMultiReduction}, however many of the complications arising in the case of general abelian extensions are not necessary for multicyclic extensions. In particular, we may set $M=1$ which makes the subgroups $H_a$ trivial and we have by definition that $d(\ell) =1$. Note also that $m := n-1 - \text{dim}_{ \mathbb{F}_{\ell}} B = n-1$. Thus, by the work of the previous section in concluding the proof of Theorem \ref{tMultiReduction}, the number of $\mathbb{F}_\ell^n$-extensions of $\Q$ with absolute discriminant at most $X$ whose norm one torus satisfies weak approximation is equal to
\[
\frac{1}{\prod_{i=0}^{n-1} (\ell^n - \ell^i)} C_{\text{lead}} X^{\frac{1}{\ell^n - \ell^{n-1}}} \log(X^{\frac{1}{\ell^n - \ell^{n-1}}} )^{\alpha -1} + O \left(X^{\frac{1}{\ell^n - \ell^{n-1}}}  (\log X)^{\alpha - 1 - \delta} \right)
\]
for some $\delta > 0$, where
\begin{align*}
C_{\text{lead}} &= \prod_{p \equiv 1 \bmod \ell} \left( 1 + \frac{\sum_{a \in \mathbb{F}_\ell^n - \{0\}} \ell^{-n}}{p}\right) 
\prod_p \left( 1 - \frac{1}{p} \right)^{\alpha} \frac{1}{\Gamma(\alpha)} 
\left( \sum_{\substack{(d_a)_{a \in \mathbb{F}_\ell^n-\{0\}} \\ d_a \mid \ell}}
\frac{\mu^2(\prod_a d_a)}{\prod_a d_a^2} 
\right)
,\\
\alpha &= 0 + \frac{1}{\ell^{n-1}} \sum_{a \in \mathbb{F}_\ell^n - \{0\}} \frac{1}{\varphi(\ell)} \sum_{\lambda \in (\Z/\ell \Z)^*} \mathbf{1}_{\lambda \equiv 1 \bmod \ell}
=\frac{\ell^n-1}{\ell^{n-1}(\ell-1)}.
\end{align*}
We observe that as in the proof of Theorem \ref{thm:multicycWright}, we have divided by a factor of $|\Aut(\mathbb{F}_\ell^n)| = \prod_{i=0}^{n-1} (\ell^n - \ell^i)$. Using the fact that
$
\left( \sum_{\substack{(d_a)_{a \in \mathbb{F}_\ell^n-\{0\}} \\ d_a \mid \ell}}
\frac{\mu^2(\prod_a d_a)}{\prod_a d_a^2} 
\right)
=
1 + \frac{\ell^n-1}{\ell^2},
$
the main term becomes
\begin{multline*}
\frac{X^{\frac{1}{\ell^n - \ell^{n-1}}} (\log X)^{\frac{\ell^n-1}{\ell^{n-1}(\ell-1)} -1 } }{\Gamma\left( \frac{\ell^n-1}{\ell^{n-1}(\ell-1)} \right)}
\frac{\left( 1 + \frac{\ell^n-1}{\ell^2} \right)}{(\ell^n - \ell^{n-1})^{\frac{\ell^n-1}{\ell^{n-1}(\ell-1)}-1}\prod_{i=0}^{n-1} (\ell^n - \ell^i)} \\ \times
\prod_{p \equiv 1 \bmod \ell} \left( 1 + \frac{\ell^n-1}{\ell^{n-1} p} \right) \prod_p \left( 1- \frac{1}{p} \right)^{\frac{\ell^n-1}{\ell^{n-1}(\ell-1)}}\!\!
\end{multline*}
as desired.

\subsection{A positive proportion of fields satisfying weak approximation}
In order to parametrise $A = C_2 \times C_3 \times C_3$-extensions, using the method of Section \ref{sec:par}, we need a 17-tuple of squarefree, pairwise coprime integers. The tuple is indexed by the non-identity elements of $C_2 \times C_3 \times C_3$ and hence we will write the tuple as $(u_1, \ldots, u_8, v_1, \ldots, v_8, w)$ where the $u_i$ are those components indexed by elements of order 3 in $C_2 \times C_3 \times C_3$, the $v_i$ are those components indexed by elements of order 6, and $w$ is the component indexed by the order 2 element. The $u_i$ and $v_i$ must be positive, but $w$ can be either positive or negative. To complete the parametrisation we must impose the further conditions that
\begin{itemize}
\item $p \mid u_i \Rightarrow p \equiv 0,1 \bmod 3$,
\item $p \mid v_i \Rightarrow p \equiv 1,3 \bmod 6$.
\end{itemize}
The proof will proceed by fixing the $u_i$ and $v_i$ and thus determining a $C_3 \times C_3$-extension $F$ of small discriminant, after which we will vary $w$. To ensure that the extension $K/\Q$ is such that the norm one torus satisfies weak approximation we must turn to the criteria in Section \ref{sec:crit}. The subset $2A \cap A[2]$ contains only the identity and therefore, by Theorem \ref{tLocalConditions}, once the $u_i$ and $v_i$ are chosen, there is no additional condition on $w$. In other words, $R^1_{K/\Q} \mathbb{G}_m$ satisfies weak approximation if and only if $R^1_{F/\Q} \mathbb{G}_m$ does. Hence, in the general set-up in Section \ref{sec:reduction}, we can take $H_a$ to be the full group $(\Z/M\Z)^*$. This means that in order to compute $A = C_2 \times C_3 \times C_3$-extensions of bounded absolute discriminant satisfying the weak approximation condition, we must consider sums of the form
\[
\sum_{\substack{ \mathbf{u},\mathbf{v} \in \Z^8_{>0} \\ \text{Par}(\mathbf{u}, \mathbf{v}) \text{ satisfies WA}\\ \text{disc}(\text{Par}(\mathbf{u}, \mathbf{v})) \leq (\log X)^{100}\\ p \mid u_i \Rightarrow p \equiv 0,1 \bmod 3\\  p \mid v_i \Rightarrow p \equiv 1,3 \bmod 6}}
\sum_{\substack{ w \in \Z_{\neq 0} \\  \text{disc}(\text{Par}(\mathbf{u}, \mathbf{v},w)) \leq X}} \mu^2 \left( w\prod_i u_i \prod_j v_j \right).
\]
Next, we describe the discriminant of the extension $K/\Q$. We observe that the valuation of the discriminant at the primes 3 and $p$ for $p$ any prime $>3$ are given by
\[
v_p(\Delta_{K/\Q}) =
\begin{cases}
0 \phantom{3}\text{ if } p \nmid u_1\cdots u_8v_1\cdots v_8 w,\\
9 \phantom{3}\text{ if } p \mid w,\\
12 \text{ if } p \mid u_i,\\
15 \text{ if } p \mid v_i.
\end{cases}
, \quad{} 
v_3(\Delta_{K/\Q}) =
\begin{cases}
0 \phantom{3}\text{ if  } 3 \nmid u_1\cdots u_8v_1\cdots v_8 w,\\
9 \phantom{3}\text{ if  } p \mid w,\\
24 \text{ if } p \mid u_i,\\
27 \text{ if } p \mid v_i.
\end{cases}
\]
Finally, at 2 (noting that 2 never ramifies in a $C_3 \times C_3$-extension), we find that the discriminant has valuation
\[
v_2(\Delta_{K/\Q})
=
\begin{cases}
0 \phantom{3}\text{ if } w \equiv 1 \bmod 4,\\
18 \text{ if } w \equiv 3 \bmod 4,\\
27 \text{ if } w \equiv 2 \bmod 4.
\end{cases}
\]
The innermost sum will be handled using the well known estimate for odd $d$
\begin{equation}
\label{eq:sqf}
\sum_{\substack{ \gcd(w,d)=1\\ w \equiv a \bmod 4 \\ 0<w^9 \leq W}} \mu^2(w)
=
\frac{3}{\pi^2}W^{1/9} \prod_{p \mid 2d} \left( 1 + \frac{1}{p} \right)^{-1} + O\left(W^{1/18} \tau(d) \right).
\end{equation}
To compute the count for $A$-extensions whose norm one torus satisfies weak approximation, we will break into four cases:
\begin{enumerate}
\item $3 \nmid u_1\cdots u_8v_1\cdots v_8 w$,
\item there is some $i$ such that $3 \mid u_i$,
\item there is some $i$ such that $3 \mid v_i$,
\item $3 \mid w$.
\end{enumerate}
Note that these cases are both disjoint and exhaustive. We start with case 1. In this case, the sum that has to be computed can be written as
\[
\Sigma_1(X) :=
\sum_{\substack{ \mathbf{u},\mathbf{v} \in \Z^8_{>0} \\ \text{Par}(\mathbf{u}, \mathbf{v}) \text{ satisfies WA}\\ \prod_i u_i^{12} \prod_j v_j^{15} \leq (\log X)^{100}\\ p \mid u_i \Rightarrow p \equiv 1 \bmod 3\\ p \mid v_i \Rightarrow p \equiv 1 \bmod 6}}
\sum_{a \in \{1,2,3\}}
\sum_{\substack{ w \in \Z_{\neq 0} \\ w \equiv a \bmod 4\\ (c(a)\vert w\vert)^9 \leq X/(\prod_i u_i^{12} \prod_j v_j^{15})}} \mu^2 \left( 3w\prod_i u_i \prod_j v_j \right),
\]
where $c(1) = 1, c(2) = 4$ and $c(3) = 4$. We can express this as
% I am now thinking that c(2) = 4 (sorry for claiming differently before). The reason is that if w is equal to 2 modulo 4, then the 2-adic part of the discriminant of Q(sqrt(w)) is 3, which is also the 2-adic part of 4 * w, i.e. c(2) * w
\[
\Sigma_1(X) =
\sum_{\substack{ \mathbf{u},\mathbf{v} \in \Z^8_{>0} \\ \text{Par}(\mathbf{u}, \mathbf{v}) \text{ satisfies WA}\\ \prod_i u_i^{12} \prod_j v_j^{15} \leq (\log X)^{100}\\ p \mid u_i \Rightarrow p \equiv 1 \bmod 3\\ p \mid v_i \Rightarrow p \equiv 1 \bmod 6}} \mu^2\left(\prod_i u_i \prod_j v_j \right)
\sum_{a \in \{1,2,3\}}
\sum_{\substack{ w \in \Z_{\neq 0} \\ w \equiv a \bmod 4\\ \vert w \vert \leq (c(a))^{-1}(X/(\prod_i u_i^{12} \prod_j v_j^{15}))^{1/9}\\ \gcd\left( w , 3\prod_i u_i \prod_j v_j\right) = 1}} \mu^2 ( w).
\]
Applying \eqref{eq:sqf} to the inner sums over $a$ and $w$, we see that they are equal to
\[
2\times\frac{3}{\pi^2} \left(\frac{X}{\prod_i u_i^{12} \prod_j v_j^{15}}\right)^{\frac{1}{9}} \left(1+ \frac{1}{4} + \frac{1}{4} \right) \prod_{p \mid 6 \prod_i u_i \prod_j v_j} \left( 1 + \frac{1}{p} \right)^{-1} + O_{\epsilon}\left( \left( \frac{X}{\prod_i u_i^{12} \prod_j v_j^{15}}\right)^{\frac{1}{18} + \epsilon} \right).
\]
Since the $u_i$ and $v_j$ all have log power size, this error term is negligible when summed over the remaining variables. Thus
\[
\Sigma_1(X) \sim \frac{9}{\pi^2} X^{\frac{1}{9}}
\sum_{\substack{ \mathbf{u},\mathbf{v} \in \Z^8_{>0} \\ \text{Par}(\mathbf{u}, \mathbf{v}) \text{ satisfies WA}\\ p \mid u_i \Rightarrow p \equiv 1 \bmod 3\\ p \mid v_i \Rightarrow p \equiv 1 \bmod 6}} \frac{\mu^2\left(\prod_i u_i \prod_j v_j \right)}{\prod_i u_i^{\frac{4}{3}} \prod_j v_j^{\frac{5}{3}}}\prod_{p \mid \prod_i u_i \prod_j v_j} \left( 1 + \frac{1}{p} \right)^{-1} .
\]
% Since u_i, v_j are greater than 3, may remove the 6 in p mid 6 prod_i u_i prod_j v_j
Unfortunately, at this stage it seems that no further simplification is possible. The condition $\text{Par}(\mathbf{u}, \mathbf{v})$ satisfies WA is not a multiplicative condition on the variables $\mathbf{u}$ and $\mathbf{v}$ and thus this convergent sum cannot be expressed as an Euler product. One could detect this condition using character sums as discussed in Section \ref{sec:charsum} but it is unclear that doing so will give a path towards writing the sum in any kind of simpler form. From here on out, we will simply refer to this infinite sum as $\kappa$.

We now move to case 2. Suppose that $3 \mid u_{i_0}$ and write $\widetilde{u}_i = u_i$ for all $i \neq i_0$ and $\widetilde{u_{i_0}} = \frac{u_{i_0}}{3}$. Then the sum which has to be computed is
\[
\Sigma_2(X) := 
\sum_{\substack{ \widetilde{\mathbf{u}},\mathbf{v} \in \Z^8_{>0} \\ \text{Par}(\mathbf{u}, \mathbf{v}) \text{ satisfies WA}\\ \prod_i \widetilde{u}_i^{12} \prod_j v_j^{15} \leq (\log X)^{100}\\ p \mid \widetilde{u}_i \Rightarrow p \equiv 1 \bmod 3\\ p \mid v_i \Rightarrow p \equiv 1 \bmod 6}}
\sum_{a \in \{1,2,3\}}
\sum_{\substack{ w \in \Z_{\neq 0} \\ w \equiv a \bmod 4\\ (c(a)\vert w\vert)^9 \leq X/3^{24}(\prod_i \widetilde{u}_i^{12} \prod_j v_j^{15})}} \mu^2 \left( 3w\prod_i \widetilde{u}_i \prod_j v_j \right).
\]
Evidently, we have $\Sigma_2 = 3^{-\frac{24}{9}} \Sigma_1$. The argument in case 3 is almost exactly the same. We find that the sum we need to compute in this setting, $\Sigma_3(X)$ satisfies $\Sigma_3 = 3^{-\frac{27}{9}} \Sigma_1$. Finally, in case 4, write $\widetilde{w} = \frac{w}{3}$. Then we consider
\[
\Sigma_4(X) :=
\sum_{\substack{ \mathbf{u},\mathbf{v} \in \Z^8_{>0} \\ \text{Par}(\mathbf{u}, \mathbf{v}) \text{ satisfies WA}\\ \prod_i u_i^{12} \prod_j v_j^{15} \leq (\log X)^{100}\\ p \mid u_i \Rightarrow p \equiv 1 \bmod 3\\ p \mid v_i \Rightarrow p \equiv 1 \bmod 6}}
\sum_{a \in \{1,2,3\}}
\sum_{\substack{ \widetilde{w} \in \Z_{\neq 0} \\ \widetilde{w} \equiv a \bmod 4\\ (c(a)\vert\widetilde{w}\vert)^9 \leq X/3^9(\prod_i u_i^{12} \prod_j v_j^{15})}} \mu^2 \left( 3\widetilde{w}\prod_i u_i \prod_j v_j \right),
\] and thus $\Sigma_4 = 3^{- \frac{9}{9}} \Sigma_1$. Combining these, we have
\[
\Sigma_1 + \Sigma_2 + \Sigma_3 + \Sigma_4
=
\left(1 + \frac{1}{3^{\frac{8}{3}}} + \frac{1}{81} + \frac{1}{3} \right) \Sigma_1(X)
\]
Just as in the last subsection, this sum counts each field extension multiple times and hence we must divide by the order of the automorphism group of $A = C_2 \times C_3 \times C_3$ which is 48.

Our work so far has lead us to the following result.

\begin{theorem}
The number of $C_2 \times C_3 \times C_3$-extensions $K/\Q$ with absolute discriminant $\leq X$ such that $R^1_{K/\Q} \mathbb{G}_m$ satisfies weak approximation is
\[
\frac{109+3\sqrt[3]{3}}{2^43^3\pi^2} \kappa X^{\frac{1}{9}} 
+ O_{\epsilon}\left( X^{\frac{1}{18} + \epsilon} \right).
\]
\end{theorem}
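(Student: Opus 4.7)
The plan is to synthesise the four contributions $\Sigma_1, \Sigma_2, \Sigma_3, \Sigma_4$ from the preceding cases into a single asymptotic, and then divide by $|\mathrm{Aut}(A)|$ to pass from epimorphisms $G_\Q \twoheadrightarrow A$ to $A$-extensions of $\Q$.

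First I would assemble the combination
\[
\Sigma_1 + \Sigma_2 + \Sigma_3 + \Sigma_4 = \left(1 + 3^{-8/3} + \tfrac{1}{81} + \tfrac{1}{3}\right)\Sigma_1(X)
\]
from the ratios $\Sigma_i/\Sigma_1$ computed above, and simplify. Using $3^{-8/3} = \sqrt[3]{3}/27 = 3\sqrt[3]{3}/81$ to put everything over the common denominator $81$ gives
\[
1 + 3^{-8/3} + \tfrac{1}{81} + \tfrac{1}{3} = \frac{81 + 3\sqrt[3]{3} + 1 + 27}{81} = \frac{109 + 3\sqrt[3]{3}}{81}.
\]
This is where the $\sqrt[3]{3}$ enters the leading constant, arising from the wild ramification of order $3$ at the prime $3$.

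Next I would substitute the leading asymptotic $\Sigma_1(X) \sim \frac{9}{\pi^2}\kappa X^{1/9}$ and divide by $|\mathrm{Aut}(C_2 \times C_3 \times C_3)| = |\mathrm{GL}_2(\mathbb{F}_3)| = 48$ to correct for the $48$-to-$1$ overcount built into $\mathrm{Par}$. This produces
\[
\frac{1}{48}\cdot\frac{109+3\sqrt[3]{3}}{81}\cdot\frac{9}{\pi^2}\,\kappa X^{1/9}
= \frac{109+3\sqrt[3]{3}}{48\cdot 9\,\pi^2}\,\kappa X^{1/9}
= \frac{109+3\sqrt[3]{3}}{2^4\cdot 3^3\,\pi^2}\,\kappa X^{1/9},
\]
using $48\cdot 9 = 432 = 2^4\cdot 3^3$.

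The error term $O_\epsilon(X^{1/18+\epsilon})$ is inherited directly from the secondary term in \eqref{eq:sqf}: in each of the four cases the inner $w$-sum contributes such a remainder, with the $\tau(d)$ divisor factor absorbed into $X^\epsilon$, and the outer summation over the poly-logarithmically small $(\mathbf{u},\mathbf{v})$-tuples is harmless. The only book-keeping I would verify is that the four cases are disjoint and exhaustive, which is manifest from the case-split on whether (and where) the prime $3$ divides the tuple entries, so that no field is over- or under-counted before the $|\mathrm{Aut}|$ normalisation.
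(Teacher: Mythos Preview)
Your proposal is correct and follows the paper's approach exactly: you combine the four ramification cases via the stated ratios, simplify the bracketed factor to $(109+3\sqrt[3]{3})/81$, insert $\Sigma_1 \sim \tfrac{9}{\pi^2}\kappa X^{1/9}$, and divide by $|\Aut(A)|=48$, with the error term inherited from \eqref{eq:sqf}. The only additional content you provide beyond the paper is the explicit arithmetic simplification of the constants, which the paper leaves implicit.
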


We can compare this count to the number of $A$-extensions of bounded absolute discriminant which we compute in a fairly similar manner. The explicit value for the leading constant in the following asymptotic formula is new, as far as the authors are aware.

\begin{theorem}
The number of $C_2 \times C_3 \times C_3$-extensions $K/\Q$ with absolute discriminant $\leq X$ is
\[
\frac{109+3\sqrt[3]{3}}{2^43^3\pi^2} X^{\frac{1}{9}}\prod_{p \equiv 1 \bmod 6} \left( 1 +  \frac{8}{p^{1/3}(p+1)}+  \frac{8}{p^{2/3}(p+1)} \right) 
+ O_{\epsilon}\left( X^{\frac{1}{18} + \epsilon} \right).
\]
\end{theorem}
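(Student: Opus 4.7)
The plan is to mirror the argument for the weak approximation count almost verbatim, the only change being that we drop the restriction that $\textup{Par}(\mathbf{u},\mathbf{v})$ yields a field satisfying weak approximation. Parametrise $C_2\times C_3\times C_3$-extensions by $17$-tuples $(u_1,\dots,u_8,v_1,\dots,v_8,w)$ exactly as in the previous subsection, and split the sum into the same four disjoint cases depending on whether $3 \nmid u_1\cdots u_8 v_1\cdots v_8 w$, $3\mid u_{i_0}$ for some $i_0$, $3\mid v_{j_0}$ for some $j_0$, or $3\mid w$. The formulas for $v_p(\Delta_{K/\Q})$ and $v_2(\Delta_{K/\Q})$ recorded above are insensitive to weak approximation, so the same discriminant decomposition applies.

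In each case, first sum over $w$ using \eqref{eq:sqf}. For case 1 this yields, after the same arithmetic that produced $\Sigma_1$,
\[
\widetilde\Sigma_1(X) \sim \frac{9}{\pi^2}X^{\frac{1}{9}} \sum_{\substack{\mathbf{u},\mathbf{v}\in\Z^{8}_{>0}\\ p\mid u_i\Rightarrow p\equiv 1\bmod 3\\ p\mid v_j\Rightarrow p\equiv 1\bmod 6}} \frac{\mu^2\!\left(\prod_i u_i\prod_j v_j\right)}{\prod_i u_i^{4/3}\prod_j v_j^{5/3}}\prod_{p\mid\prod_i u_i\prod_j v_j}\!\!\left(1+\frac{1}{p}\right)^{-1}\!,
\]
with the error term $O_\epsilon(X^{1/18+\epsilon})$ obtained exactly as before. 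The truncation condition $\prod_i u_i^{12}\prod_j v_j^{15}\leq(\log X)^{100}$ can be removed with negligible error since the tail converges.

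The key simplification compared to the weak approximation version is that, without the non-multiplicative condition on $\textup{Par}(\mathbf{u},\mathbf{v})$, the above sum is genuinely multiplicative in the prime support of $\prod u_i\prod v_j$. Since each relevant prime $p>3$ must be $\equiv 1\bmod 6$, its local factor records the three mutually exclusive possibilities: $p$ divides none of the variables, $p$ divides exactly one of the eight $u_i$'s, or $p$ divides exactly one of the eight $v_j$'s. This yields the Euler factor
\[
1+\frac{8}{p^{4/3}}\cdot\frac{p}{p+1}+\frac{8}{p^{5/3}}\cdot\frac{p}{p+1}=1+\frac{8}{p^{1/3}(p+1)}+\frac{8}{p^{2/3}(p+1)},
\]
and the product over $p\equiv 1\bmod 6$ converges. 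Cases 2, 3, 4 follow by the substitutions $\widetilde u_{i_0}=u_{i_0}/3$, $\widetilde v_{j_0}=v_{j_0}/3$, $\widetilde w=w/3$ as in the WA proof, producing $\widetilde\Sigma_i = 3^{-24/9},\,3^{-27/9},\,3^{-9/9}$ multiples of $\widetilde\Sigma_1$ respectively.

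Summing the four cases and dividing by $|\textup{Aut}(C_2\times C_3\times C_3)|=48$, we get leading constant
\[
\frac{1}{48}\left(1+\frac{1}{3^{8/3}}+\frac{1}{81}+\frac{1}{3}\right)\frac{9}{\pi^2}=\frac{109+3\sqrt[3]{3}}{2^4 3^3 \pi^2},
\]
using $81/3^{8/3}=3^{4/3}=3\sqrt[3]{3}$, which matches the claimed constant. The only genuinely new input relative to the WA theorem is the multiplicativity observation that converts the convergent sum into the Euler product; there is no serious obstacle, and even the error term is identical.
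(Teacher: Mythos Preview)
Your proposal is correct and follows essentially the same approach as the paper's own proof: parametrise as in the weak-approximation computation, drop the non-multiplicative WA condition, sum over $w$ via \eqref{eq:sqf}, express the remaining convergent sum as the stated Euler product using multiplicativity (with exactly the local-factor computation you give), handle the four cases for the prime $3$ via the same substitutions and the same ratios $3^{-24/9},3^{-27/9},3^{-9/9}$, and divide by $|\Aut(C_2\times C_3\times C_3)|=48$. The paper's proof is a near-verbatim restatement of this, so there is nothing further to add.
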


\begin{proof}
The proof follows exactly the same lines as the previous proof but we no longer need to impose the condition that weak approximation holds in the $C_3 \times C_3$-extension. We will parametrise the extensions in the exact same way and similarly consider 4 cases depending on the ramification of the prime 3. In the first instance, when 3 does not ramify in the whole extension, we need to compute
\[
\widetilde{\Sigma_1}(X)
:=
\sum_{\substack{ \mathbf{u},\mathbf{v} \in \Z^8_{>0} \\  \prod_i u_i^{12} \prod_j v_j^{15} \leq (\log X)^{100}\\ p \mid u_i \Rightarrow p \equiv 1 \bmod 3\\ p \mid v_i \Rightarrow p \equiv 1 \bmod 6}} \mu^2\left(\prod_i u_i \prod_j v_j \right)
\sum_{a \in \{1,2,3\}}
\sum_{\substack{ w \in \Z_{\neq 0} \\ w \equiv a \bmod 4\\ \vert w \vert \leq (c(a))^{-1}(X/(\prod_i u_i^{12} \prod_j v_j^{15}))^{1/9}\\ \gcd\left( w , 3\prod_i u_i \prod_j v_j\right) = 1}} \mu^2 ( w).
\]
Thus, similarly, we have
\[
\widetilde{\Sigma_1}(X) = \frac{9}{\pi^2} X^{\frac{1}{9}}
\sum_{\substack{ \mathbf{u},\mathbf{v} \in \Z^8_{>0} \\ p \mid u_i \Rightarrow p \equiv 1 \bmod 3\\ p \mid v_i \Rightarrow p \equiv 1 \bmod 6}} \frac{\mu^2\left(\prod_i u_i \prod_j v_j \right)}{\prod_i u_i^{\frac{4}{3}} \prod_j v_j^{\frac{5}{3}}}\prod_{p \mid  \prod_i u_i \prod_j v_j} \left( 1 + \frac{1}{p} \right)^{-1} 
+
O_{\epsilon}\left( X^{\frac{1}{18} + \epsilon}\right)
\]
We can express the remaining convergent sum as an Euler product so that
\[
\widetilde{\Sigma_1}(X)
\sim
\frac{9}{\pi^2} X^{\frac{1}{9}}
\prod_{p \equiv 1 \bmod 6} \left( 1 + 8 \frac{1}{p^{1/3}(p+1)}+ 8 \frac{1}{p^{2/3}(p+1)} \right).
\]
The proof now follows the exact same lines as the previous theorem to conclude.
\end{proof}

Observe that, as we expect, the order of magnitude of these two counts are the same. Therefore, we can combine these results to establish the precise (positive) proportion of $A$-extensions whose norm one torus satisfies weak approximation.
\begin{corollary}
The proportion of $C_2 \times C_3 \times C_3$-extensions of $\Q$ whose norm one tori satisfy weak approximation is
\[
\kappa
\prod_{p \equiv 1 \bmod 6} \left( 1 +  \frac{8}{p^{1/3}(p+1)}+ \frac{8}{p^{2/3}(p+1)} \right)^{-1} .
\]
\end{corollary}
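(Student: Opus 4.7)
The plan is to deduce the corollary as a straightforward consequence of the two preceding theorems by taking a ratio of asymptotic formulas. Specifically, denoting by $N_{\text{WA}}(X)$ the count of $C_2 \times C_3 \times C_3$-extensions $K/\Q$ with $\textup{Disc}(K/\Q) \leq X$ such that $R^1_{K/\Q}\mathbb{G}_m$ satisfies weak approximation, and by $N_{\text{tot}}(X)$ the total count of $C_2 \times C_3 \times C_3$-extensions with $\textup{Disc}(K/\Q) \leq X$, the desired density is
\[
\lim_{X \to \infty} \frac{N_{\text{WA}}(X)}{N_{\text{tot}}(X)}.
\]

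The first step is to substitute the asymptotic for $N_{\text{WA}}(X)$ provided in the first theorem of this subsection, namely
\[
N_{\text{WA}}(X) = \frac{109 + 3\sqrt[3]{3}}{2^4 3^3 \pi^2} \kappa X^{1/9} + O_\epsilon(X^{1/18 + \epsilon}),
\]
together with the asymptotic for $N_{\text{tot}}(X)$ from the second theorem,
\[
N_{\text{tot}}(X) = \frac{109 + 3\sqrt[3]{3}}{2^4 3^3 \pi^2} X^{1/9} \prod_{p \equiv 1 \bmod 6} \left( 1 + \frac{8}{p^{1/3}(p+1)} + \frac{8}{p^{2/3}(p+1)} \right) + O_\epsilon(X^{1/18 + \epsilon}).
\]
Both asymptotics have main term of order $X^{1/9}$ and error of strictly smaller order $X^{1/18+\epsilon}$, so the ratio converges as $X \to \infty$ to the ratio of leading constants. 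The numerical prefactor $\frac{109 + 3\sqrt[3]{3}}{2^4 3^3 \pi^2}$ cancels, yielding the claimed value
\[
\kappa \prod_{p \equiv 1 \bmod 6} \left( 1 + \frac{8}{p^{1/3}(p+1)} + \frac{8}{p^{2/3}(p+1)} \right)^{-1}.
\]

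There is no substantive obstacle here; the hard work is entirely contained in the two preceding theorems, which were themselves deduced from Theorem \ref{tMultiReduction} after splitting into the four ramification cases at $3$ and applying the standard squarefree estimate (\ref{eq:sqf}). One should only take care to note that $N_{\text{tot}}(X) > 0$ for all $X$ sufficiently large (which is immediate from the positivity of its leading constant, as each Euler factor in the displayed product exceeds $1$), so that the ratio is well-defined for large $X$.
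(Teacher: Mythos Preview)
Your proof is correct and matches the paper's approach exactly: the paper simply remarks that the two preceding theorems give counts of the same order of magnitude $X^{1/9}$, and the corollary follows by taking the ratio of the leading constants. There is nothing more to it.
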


As discussed in $\S$ \ref{ss:hnp}, since we have $H^3(C_2 \times C_3\times C_3, \Z) \cong C_3$, weak approximation is satisfied if and only if the Hasse norm principle fails. Therefore, we are able to determine the precise proportion of Hasse norm principle failures in this setting as well.

\begin{corollary}
The proportion of $C_2 \times C_3 \times C_3$-extensions of $\Q$ which fail the Hasse norm principle is
\[
\kappa
\prod_{p \equiv 1 \bmod 6} \left( 1 +  \frac{8}{p^{1/3}(p+1)}+ \frac{8}{p^{2/3}(p+1)} \right)^{-1} .
\]
\end{corollary}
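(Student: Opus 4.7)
The plan is to deduce this corollary directly from the previous one by exploiting the explicit relationship between weak approximation failure and the Hasse norm principle that is specific to groups with $H^3(A,\Z)$ of prime order.

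First I would verify that $H^3(A,\Z) \cong C_3$ for $A = C_2 \times C_3 \times C_3$. Using the canonical isomorphism $H^3(A,\Z) \cong \textup{Hom}(\wedge^2(A), \Q/\Z)$ recalled in Section \ref{sec:crit}, together with the standard decomposition of $\wedge^2$ for a direct sum of cyclic groups of coprime order — namely $\wedge^2(C_2 \oplus C_3 \oplus C_3) \cong \wedge^2(C_2) \oplus \wedge^2(C_3) \oplus \wedge^2(C_3) \oplus (C_2 \otimes C_3) \oplus (C_2 \otimes C_3) \oplus (C_3 \otimes C_3)$ — only the last term survives, giving $\wedge^2(A) \cong C_3$ and hence $H^3(A,\Z)^\vee \cong C_3$.

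Next, I would invoke the Voskresenskii exact sequence \eqref{eq:vosk}, namely
\[
0 \to A(T) \to H^3(A,\Z)^\vee \to \Sha(T) \to 0.
\]
Since the middle term has prime order $3$, the sequence forces exactly one of the two outer terms to vanish. Thus, for every $C_2 \times C_3 \times C_3$-extension $K/\Q$, one has $A(T) = 0$ if and only if $\Sha(T) \neq 0$; equivalently, $R^1_{K/\Q}\mathbb{G}_m$ satisfies weak approximation if and only if $K/\Q$ fails the Hasse norm principle. This is the mechanism highlighted in \S\ref{ss:hnp}.

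Finally, combining this equivalence with the previous corollary identifies the set of $C_2 \times C_3 \times C_3$-extensions of $\Q$ failing the Hasse norm principle with the set of such extensions whose norm one torus satisfies weak approximation, and the two counts (of extensions of absolute discriminant at most $X$) are therefore equal. Dividing by the count of all $C_2 \times C_3 \times C_3$-extensions of bounded discriminant established in the second theorem of \S\ref{sec:explicit} produces the stated proportion. There is no real obstacle here beyond the algebraic input on $H^3(A,\Z)^\vee$: the argument is a one-line consequence of the previous corollary once the prime-order observation is in hand.
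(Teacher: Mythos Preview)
Your proposal is correct and follows exactly the paper's approach: the paper simply observes that $H^3(C_2\times C_3\times C_3,\Z)\cong C_3$, invokes the Voskresenski{\u\i} exact sequence from \S\ref{ss:hnp} to conclude that weak approximation holds if and only if the Hasse norm principle fails, and then reads off the proportion from the previous corollary. Your added verification of $\wedge^2(A)\cong C_3$ via the direct-sum decomposition is a welcome detail that the paper leaves implicit.
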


\end{document}